\documentclass[11pt,twoside]{article}

\usepackage[utf8]{inputenc}
\usepackage[colorlinks=true
]{hyperref}
\hypersetup{urlcolor=blue, citecolor=red, linkcolor=blue}
\usepackage{mleftright}
\usepackage{stmaryrd}
\SetSymbolFont{stmry}{bold}{U}{stmry}{m}{n}
\usepackage{microtype}

\usepackage{bbm}
\usepackage{colortbl}

\usepackage{graphicx}
\usepackage[active]{srcltx}

\usepackage{amsmath,amssymb}

\usepackage{extarrows}

\numberwithin{equation}{section}

\usepackage{physics}
\usepackage{bm}
\usepackage{enumerate}
\usepackage{amsthm}
\usepackage[all,2cell]{xy}
\usepackage{mathrsfs}

\usepackage{mathtools}
\mathtoolsset{showonlyrefs}

\usepackage{tikz-cd}
\usetikzlibrary{cd}

\usepackage{xcolor}

\hypersetup{urlcolor=blue, citecolor=red, linkcolor=blue}

\usepackage[a4paper, left=25mm, right=25mm, top=30mm, bottom=25mm]{geometry}

\usepackage{marginnote}

\usepackage{imakeidx}
\makeindex[intoc]

\usepackage[nottoc]{tocbibind}
\usepackage[colorinlistoftodos]{todonotes} 

\theoremstyle{plain}
\newtheorem{theorem}{Theorem}[section]
\newtheorem{proposition}[theorem]{Proposition}
\newtheorem{corollary}[theorem]{Corollary}
\newtheorem{lemma}[theorem]{Lemma}

\theoremstyle{definition}
\newtheorem{definition}[theorem]{Definition}
\newtheorem{remark}[theorem]{Remark}
\newtheorem{example}[theorem]{Example}

\AtBeginEnvironment{remark}{%
  \pushQED{\qed}%
}
\AtEndEnvironment{remark}{\popQED\endexample}

\AtBeginEnvironment{example}{%
  \pushQED{\qed}%
}
\AtEndEnvironment{example}{\popQED\endexample}

\newcommand\restr[2]{{
  \left.\kern-\nulldelimiterspace #1 \right|_{#2} 
}}

\newcommand*{\transp}[2][-3mu]{\ensuremath{\mskip1mu\prescript{\smash{\mathsf t\mkern#1}}{}{\mathstrut#2}}}%

\newcommand{\R}{\mathbb{R}}

\renewcommand{\d}{\mathrm{d}}

\newcommand{\Cinfty}{\mathscr{C}^\infty}
\newcommand{\T}{\mathrm{T}}

\newcommand{\Id}{\mathrm{Id}}
\newcommand{\leftperp}[1]{\,^{\perp} #1}

\def\K{\mathcal{K}}
\newcommand*{\inn}[1]{\iota_{#1}}

\newcommand{\sksym}[1]{\mathsf{\Lambda}^{#1}}
\newcommand{\sym}[1]{\mathsf{S}^{#1}}

\DeclareMathOperator{\codim}{codim}

\newcommand{\Lie}{\mathscr{L}}

\newcommand{\X}{\mathfrak{X}}

\renewcommand{\H}{\mathcal{H}}

\newcommand{\D}{\mathcal{D}}
\newcommand{\V}{\mathcal{V}}

\DeclareMathOperator{\cl}{cl}

\DeclareMathOperator{\Ima}{Im}
\def\longto{\longrightarrow}

\DeclareMathAlphabet{\mathpzc}{OT1}{pzc}{m}{it}
\def\d{\mathrm{d}}

\DeclareMathOperator{\curv}{curv}

\DeclareMathOperator{\pr}{pr}
\DeclareMathOperator{\Ker}{Ker}

\newcommand{\dfn}[1]{\textit{\textbf{{#1}}}}

\usepackage{xcolor}
\usepackage{fancyhdr}

\begin{document}





{\Huge\sffamily\raggedright 
From 2-covariant tensor fields\\[3mm]
to Poisson and Jacobi structures
}
\vspace{1em}

{\large\sffamily\raggedright
    September 16, 2026
}

\vspace{1em}

{\large\raggedright\sffamily
    Manuel de León
}\vspace{1mm}\newline
{\raggedright
    Instituto de Ciencias Matemáticas (ICMAT--CSIC).\\
    C. Nicolás Cabrera, 13-15, Fuencarral, 28049 Madrid, Spain.
    \\[1.5pt]    
    Real Academia de Ciencias Exactas, Físicas y Naturales de España.\\
    C. Valverde, 22, 28004 Madrid, Spain.
    \\[1.5pt]
    Department of Mathematics, School of Sciences, Great Bay University, Dongguan, Guangdong, 523000 China.\\    
    e-mail: \href{mailto:mdeleon@icmat.es}{mdeleon@icmat.es} --- orcid: \href{https://orcid.org/0000-0002-8028-2348}{0000-0002-8028-2348}
}

\medskip

{\large\raggedright\sffamily
    Xavier Gràcia
}\vspace{1mm}\newline
{\raggedright
    Dept.\ of Mathematics, Universitat Politècnica de Catalunya, Barcelona\\
    e-mail: 
    \href{mailto:xavier.gracia@upc.edu}{xavier.gracia@upc.edu} --- orcid: 
    \href{https://orcid.org/0000-0003-1006-4086}{0000-0003-1006-4086}
}

\medskip

{\large\raggedright\sffamily
    Rubén Izquierdo-López
}\vspace{1mm}\newline
{\raggedright
    Instituto de Ciencias Matemáticas (ICMAT--CSIC).\\
    C. Nicolás Cabrera, 13-15, Fuencarral, 28049 Madrid, Spain.
    
    e-mail: \href{mailto:ruben.izquierdo@icmat.es}{ruben.izquierdo@icmat.es} --- orcid: \href{https://orcid.org/0009-0007-8747-344X}{0009-0007-8747-344X}
}

\medskip

{\large\raggedright\sffamily
    Ángel Martínez-Muñoz
}\vspace{1mm}\newline
{\raggedright
    Department of Computer Engineering and Mathematics, Universitat Rovira i Virgili\\
    Avinguda Països Catalans 26, 43007 Tarragona, Spain\\
    e-mail: \href{mailto:angel.martinezm@urv.cat}{angel.martinezm@urv.cat} --- orcid: \href{https://orcid.org/0009-0002-8944-5403}{0009-0002-8944-5403}
}

\medskip

{\large\raggedright\sffamily
    Xavier Rivas
}\vspace{1mm}\newline
{\raggedright
    Department of Computer Engineering and Mathematics, Universitat Rovira i Virgili\\
    Avinguda Països Catalans 26, 43007 Tarragona, Spain\\
    e-mail: \href{mailto:xavier.rivas@urv.cat}{xavier.rivas@urv.cat} --- orcid: \href{https://orcid.org/0000-0002-4175-5157}{0000-0002-4175-5157}
}

\vspace{1em}

{\large\bf\raggedright
    Abstract
}\vspace{1mm}\newline
{\raggedright
Poisson and Jacobi structures play a fundamental role in the geometric description of many systems arising in classical mechanics. In most cases, the corresponding bivector field is induced by a non-degenerate 2-covariant tensor field. In this paper, we present a unified framework for constructing the associated brackets by studying the Poisson and Jacobi structures induced by these tensors. More specifically, under suitable assumptions on the tensor, we derive a formula for computing the Schouten--Nijenhuis bracket of the associated bivector field in terms of the curvature of a certain distribution and the exterior derivative of a differential form. This formula provides the obstruction to the existence of a Poisson or Jacobi structure. To illustrate the theory, we recover the classical brackets associated with symplectic, locally conformally symplectic, contact, cosymplectic, and cocontact geometries. Finally, we characterise the conditions under which fat bundles and almost cosymplectic structures of order $p$ determine a Jacobi bracket.
}

\vspace{1em}

{\large\bf\raggedright
    Keywords:} Poisson structure, Jacobi structure, bracket, bilinear form, $2$-covariant tensor field, bivector field, Schouten--Nijenhuis bracket, curvature, cocontact structure, cosymplectic structure, fat bundle
\medskip

{\large\bf\raggedright
MSC2020 codes}:
53D17,
15A69,  
53D10,
53D05,
70G45
	

\medskip




%


{\setcounter{tocdepth}{3}
\def\baselinestretch{1}
\small
\def\addvspace#1{\vskip 1pt}
\parskip 0pt plus 0.1mm
\tableofcontents
}

\pagestyle{fancy}

\fancyhead[C]{}                  
\fancyhead[LE]{Poisson and Jacobi structures from 2-covariant tensors}    
\fancyhead[LO]{\thepage}                  
\fancyhead[RE]{\thepage}                  
\fancyhead[RO]{M. de León, X. Gràcia, R. Izquierdo-López, Á. Martínez-Muñoz and X. Rivas}       

\fancyfoot[L]{}     
\fancyfoot[C]{}                  
\fancyfoot[R]{}            

\renewcommand{\headrulewidth}{0.1pt}  
\renewcommand{\footrulewidth}{0pt}    

\renewcommand{\headrule}{%
    \vspace{3pt}                
    \hrule width\headwidth height 0.4pt 
    \vspace{0pt}                
}

\setlength{\headsep}{30pt}  

\section{Introduction}


The study of classical mechanics was revolutionized by the introduction of symplectic geometry (see, for instance, \cite{AM_78, Arn_89, LR_89, LM_87}), providing powerful differential geometric tools for the analysis of Hamiltonian systems. 
This framework has yielded profound results with vast applications, including Darboux's theorem, the Liouville--Arnold theorem for integrable systems, and KAM theory.

The success of symplectic geometry has naturally inspired the study and development of other geometric structures capable of modelling more complex dynamical systems. This effort has led to the formalisation (see, for example,~\cite{Alb_89,Bra_18,CG_2025,LL_19,LR_89,GGMRR_20a,MR_2025}) of contact geometry for describing time-dependent, dissipative and thermodynamic systems, cosymplectic geometry for non-autonomous mechanics, and locally conformally symplectic manifolds for non-conservative systems, among others. 
Beyond mathematical physics, these novel geometric structures have also found applications in modern topology (see~\cite{Gei_08,McDuff_2017}), serving as useful tools in areas such as knot theory and low-dimensional topology.

For these reasons, some of the present authors recently introduced in~\cite{GMR_2026} a general framework for studying geometric structures defined by a pair $(\tau, \omega)$ of a 1-form and a 2-form, under suitable regularity conditions (the so-called \textit{doublets}, see Section~\ref{sec:doublets}). As it turns out, most of the usual geometric structures used to model different types of Hamiltonian and Lagrangian systems fall within this framework, allowing for a unified and systematic mathematical treatment. In fact, all the examples mentioned earlier can be seen as doublets with class equal to the dimension of the manifold, where the class is given by the codimension of $\Ker\tau \cap \Ker\omega$. When the class is equal to the dimension of the manifold, the 2-covariant tensor field $B= \omega + \tau \otimes \tau$ is non-degenerate. 

In most of these contexts, the existence of a suitable bracket of functions is essential. The classical example is the Poisson bracket, which on a manifold $M$ is an $\R$-bilinear operation
\[
\{\cdot, \cdot\} \colon \Cinfty(M) \times \Cinfty(M) \longrightarrow \Cinfty(M)
\]
that is skew-symmetric and satisfies the Leibniz and Jacobi identities. Such a structure naturally arises in symplectic and cosymplectic geometry. A more general notion is that of a Jacobi bracket, which is the geometric framework encompassing locally conformally symplectic, contact, and cocontact structures. A Jacobi manifold is simply a manifold $M$ equipped with a bracket of functions $\{\cdot, \cdot\}$ that is skew-symmetric, local (usually referred to as satisfying a weak Leibniz identity), and fulfils the Jacobi identity. Namely, it defines a local Lie algebra on $\Cinfty(M)$ in the sense of Kirillov \cite{kir1976russ.math.surv.}.

A result due to Lichnerowicz \cite{Lichnerowicz1977LesVD} states that a Poisson structure on a manifold $M$ is equivalently defined by a bivector field $\Lambda \in \mathfrak{X}^2(M)$ satisfying $[\Lambda, \Lambda] = 0$, where $[\cdot, \cdot]$ denotes the Schouten--Nijenhuis bracket \cite{Mar_97,Tul_74}. Similarly, a Jacobi structure is equivalently given~\cite{Lichnerowicz1978LesVDJacobi} by a pair $(\Lambda, E)$, where $\Lambda \in \mathfrak{X}^2(M)$ and $E \in \mathfrak{X}(M)$ are such that $[\Lambda, \Lambda] =- 2 E \wedge \Lambda$ and $[\Lambda,E] = 0$. Associated with most of the aforementioned geometric structures, there exists a Poisson or Jacobi bivector field $\Lambda$ (along with a vector field $E$ in the Jacobi case), which is constructed from the defining differential forms.

More specifically, a unifying feature of symplectic, locally conformally symplectic, cosymplectic, contact, and cocontact geometry is the existence of a (not necessarily non-degenerate) $2$-form $\omega \in \Omega^2(M)$ that, together with the remaining structural data, induces a vector bundle isomorphism
\[
\flat \colon \T M \rightarrow \T^\ast M.
\]
With these two ingredients, one can always define the bivector field
\[
\Lambda(\alpha, \beta) \coloneqq -\omega (\flat^{-1}(\alpha), \flat^{-1}(\beta))\,,
\]
and then verify by direct computation whether it defines a Poisson or Jacobi structure. The computations involved get increasingly complicated as the geometric structure increases in generality. In fact, the calculation of the Schouten--Nijenhuis bracket is usually performed in Darboux coordinates, which significantly reduces the geometric understanding of the computation, and increases the amount of theory needed to conclude that the desired bracket is Poisson or Jacobi. Namely, one needs to prove the existence of adapted coordinates.

The objective of this paper is to shed light on these computations by using the isomorphism
\[
\flat \colon \T M \longrightarrow \T^\ast M
\]
as the core defining feature of the geometry. This approach naturally encompasses the aforementioned geometric structures within a single framework, while also allowing for more general situations. We identify this vector bundle isomorphism with the left contraction of a (generally neither symmetric nor skew-symmetric) $2$-covariant tensor field $B \in \Gamma \left( \T^\ast M \otimes \T^\ast M\right)$, and study the conditions under which the skew-symmetric part of its inverse is a Poisson or Jacobi bivector. The main contribution of this work is Theorem~\ref{thm:good-looking_formula}, which provides an intrinsic, coordinate-free formula for the Schouten--Nijenhuis bracket of the bivector field associated with~$B$. Although the formula is derived for a specific subclass of such tensor fields $B$, we show that one can always reduce the study to this case if the bivector has constant rank.

Specifically, if 
$B$ is a non-degenerate 2-covariant tensor such that $\T M = \Ker \omega \oplus \Ker S$, where $\omega$ and $S$ denote the skew-symmetric and symmetric parts of $B$, respectively, then for any $X,Y,Z \in \X (M)$ we have
    \begin{align*}
        \frac{1}{2}\llbracket\omega,\omega\rrbracket(X,Y,Z) =
        \d \omega ( X_\H, Y_\H, Z_\H) + S([X_\H,Y_\H],Z_\V) + S([Y_\H,Z_\H],X_\V) + S([Z_\H,X_\H],Y_\V)\,.
    \end{align*}
Here, $\llbracket \cdot,\cdot \rrbracket$ is a conjugate of the Schouten--Nijenhuis bracket by the bilinear form $B$, $\H = \Ker S$, $\V= \Ker \omega$, and $X_\H\coloneqq \pr_\H (X)$, $X_\V \coloneqq \pr_\V(X)$ denote the projections of the vector fields to the horizontal and vertical distributions, respectively,
as it will be explained later on.

This single formula allows us to systematically recover the classical Poisson and Jacobi brackets of all standard geometries (symplectic, locally conformally symplectic, cosymplectic, contact, and cocontact). Furthermore, we provide two novel applications of this framework, characterizing precisely when almost cosymplectic structures of order $p$ and fat bundles give rise to Poisson and Jacobi structures.

The paper is organised as follows.
In Section~\ref{section:multilinear_algebra}, we review essential aspects of multilinear algebra, focusing on the properties of non-degenerate 2-covariant tensors, their associated asymmetry endomorphism, and a generalised notion of orthogonality. 
In Section~\ref{Section:Conjugate_bracket}, we introduce the conjugate Schouten--Nijenhuis bracket and study its properties.
In Section~\ref{Section:One_tensor} we derive our main result: an intrinsic, coordinate-free formula characterizing the obstruction for a non-degenerate 2-covariant tensor to induce a Poisson or Jacobi structure. 
Section~\ref{section:Examples} is devoted to illustrating this framework across various geometric structures; we recover the classical brackets of symplectic, cosymplectic, contact, locally conformally symplectic, and cocontact geometries, and present new applications to almost cosymplectic manifolds of order $p$ and fat principal bundles. 
Finally, Section~\ref{section:conclusions} offers some concluding remarks and future outlooks. 

Unless otherwise stated, 
all vector spaces are assumed to be real and finite-dimensional.
Throughout the paper all manifolds are assumed to be smooth and paracompact, and all maps between them to be smooth. 

\section{Preliminaries on multilinear algebra}
\label{section:multilinear_algebra}
 
This section collects the multilinear algebra results used throughout the paper.
We study the algebraic properties of (not necessarily symmetric or skew-symmetric) non-degenerate 2-covariant tensors, their associated asymmetry endomorphism~(see Definition~\ref{def:asymmetry}), generalised notions of orthogonality, and the decomposition induced on the exterior algebra by a direct sum $E = {H} \oplus {V}$.

\subsection{Bilinear forms}

Let $E$ be a (finite-dimensional) real vector space.
Giving a bilinear form 
$B \colon E \times E \to \R$
on~$E$ amounts to giving a 2-covariant tensor
$B \in E^\ast \otimes E^\ast$. 
In this space we have a canonical involution 
\[
\mathsf{t} \colon E^\ast \otimes E^\ast \longrightarrow E^\ast \otimes E^\ast
\]
given by $\transp{(\alpha_1 \otimes \alpha_2)} \coloneqq \alpha_2 \otimes \alpha_1$ on decomposable elements, and extended by linearity.
It is well known 
that the decomposition of $E^\ast \otimes E^\ast$ into the $\pm1$ eigenspaces of this involution is the usual splitting in symmetric and skew-symmetric tensors
$$
E^\ast \otimes E^\ast =  
\sksym{2} (E^\ast) \oplus \sym{2}(E^\ast)\,.
$$
Hence, a bilinear form $B \in E^\ast \otimes E^\ast$ decomposes canonically into its skew-symmetric and symmetric parts $B = \omega + S$, where $\omega \in \sksym{2}(E^\ast)$ and $S \in \sym{2}(E^\ast)$. Explicitly, 
\[
\omega = \frac{B - \transp{B}}{2}\,, \qquad S = \frac{B + \transp{B}}{2}\,.
\]
The bilinear form $B$ naturally induces a linear map 
$$
\widehat{B} \colon E \rightarrow E^\ast
\,,
\qquad 
\widehat{B}(v_1) = B(v_1, \cdot)
\,.
$$
A similar linear map can be obtained from $B$ via right contraction, $v_2 \mapsto B(\cdot,v_2)$.
This other map is just $\widehat{\transp B}$.
In general $\widehat B$ and $\widehat{\transp B}$ are different,
since $B$ is \emph{not} assumed to be symmetric.

On the other hand,
the linear map $\widehat B$ has a transpose (or dual map)
$\transp{\widehat B} \colon E^{**} \cong E \to E^*$.
This coincides with $\widehat{\transp B}$,
therefore there is no danger of confusion.
Here we have made use of the canonical isomorphism 
$E \cong E^{**}$,
that holds only when $E$ is finite-dimensional.






The same applies to the extensions of $\widehat B$ to 
tensor powers and exterior powers of~$E$.
For instance,
\begin{lemma}
\label{lemma:evaluation_exterior_algebra}
Under the canonical identification of 
$\sksym{k}E^*$ with $(\sksym{k}E)^*$,
we have  
$$
\transp{(\sksym{k} \widehat{B})}
= 
\sksym{k} \transp{\widehat{B}}
\,.
$$
\end{lemma}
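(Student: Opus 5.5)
The plan is to reduce the identity to a check on decomposable elements, since both sides are linear maps $\sksym{k}E \to \sksym{k}E^\ast$. First I fix the pairing: the canonical identification $\sksym{k}E^\ast \cong (\sksym{k}E)^\ast$ is
\[
\langle \alpha_1\wedge\dots\wedge\alpha_k,\; v_1\wedge\dots\wedge v_k\rangle = \det\bigl(\alpha_i(v_j)\bigr),
\]
and by bilinearity it suffices to work with decomposable elements. Next I recall that $\sksym{k}\widehat B \colon \sksym{k}E \to \sksym{k}E^\ast$ is defined by $v_1\wedge\dots\wedge v_k \mapsto \widehat B(v_1)\wedge\dots\wedge\widehat B(v_k)$. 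Its transpose is then a map $(\sksym{k}E^\ast)^\ast \to (\sksym{k}E)^\ast$. Under the same identification, and using $E\cong E^{\ast\ast}$ as in the preceding paragraph, this is a map $\sksym{k}E \to \sksym{k}E^\ast$, which is the same type as $\sksym{k}\transp{\widehat B}$.

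To compare the two maps, I evaluate both on pairs of decomposable elements $u = u_1\wedge\dots\wedge u_k$ and $v = v_1\wedge\dots\wedge v_k$, and show that
\[
\bigl\langle \transp{(\sksym{k}\widehat B)}(u), v\bigr\rangle = \bigl\langle \sksym{k}\transp{\widehat B}(u), v\bigr\rangle .
\]
For the left side, the definition of transpose turns it into $\langle \sksym{k}\widehat B(v), u\rangle = \det\bigl(\widehat B(v_j)(u_i)\bigr) = \det\bigl(B(v_j,u_i)\bigr)$. For the right side, I get $\det\bigl(\transp{\widehat B}(u_i)(v_j)\bigr) = \det\bigl(B(v_j,u_i)\bigr)$, since $\transp{\widehat B}=\widehat{\transp B}$ and $\transp B(u_i,v_j)=B(v_j,u_i)$. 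The two determinants have the same entries with the same indexing, so they agree. (Even if the indexing came out transposed, the determinant is invariant under transposition.) Since decomposable elements span and the pairing is non-degenerate, the two maps coincide.

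The only real obstacle is bookkeeping of the identifications. The pairing of $(\sksym{k}E^\ast)$ with $\sksym{k}E$ enters both in the identification $(\sksym kE^\ast)^\ast\cong \sksym kE$ and in $(\sksym kE)^\ast\cong\sksym kE^\ast$. Whatever normalisation is chosen (determinant, or with a $1/k!$ factor), the same one must be used on both sides so that constants cancel. I will state this explicitly and note that the argument is insensitive to the normalisation, because the same factor appears on both sides.
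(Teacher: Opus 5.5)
Your proof is correct. Both sides pair with $v$ to give $\det\bigl(B(v_j,u_i)\bigr)$, and you rightly note that using the same normalisation in both identifications is what makes the constants cancel.

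The paper gives no separate proof. It presents the lemma as the exterior-power analogue of $\transp{\widehat B}=\widehat{\transp B}$, and your decomposable-element computation is the routine verification behind that remark. Note that nothing specific to $B$ is used: the same determinant identity shows $\transp{(\sksym{k} f)}=\sksym{k}\transp{f}$ for any linear map $f$.
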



\subsection{Non-degenerate bilinear forms}

Henceforth we restrict to the non-degenerate case.
Recall that a bilinear form $B$ on~$E$ is \emph{non-degenerate} 
when the associated linear map
$\widehat{B} \colon E \to E^*$ is an isomorphism. 
This holds if, and only if,  $\transp{\widehat{B}}$ also is.
Using this isomorphism,
the spaces $E$ and $E^*$ can be swapped in any construction
involving them.
In particular, this isomorphism can be applied 
to the very bilinear form, or to its transpose.

\begin{definition}
The \dfn{inverse bilinear map} of~$B$ is the 
bilinear form 
$B^{-1} \colon E^* \times E^* \to \R$ such that 
$\widehat{B^{-1}}$ coincides with $\widehat{B}^{-1}$.
\end{definition}

\begin{remark}
Let $\operatorname{M}(B)$ be the matrix of the bilinear form~$B$.
Then, with the usual conventions of matrix calculus,
$\operatorname{M}(\widehat B) = \transp{\operatorname{M}(B)}$.
If $B$ is non-degenerate,
$\operatorname{M}(B^{-1}) = \operatorname{M}(B)^{-1}$.
\end{remark}

\begin{proposition}
\label{prop:description_of_inverse}
If $B$ is a non-degenerate bilinear form and $B^{-1}$ its inverse bilinear map, then:
\begin{equation}
(\widehat{B} \otimes \widehat{B}) \left(B^{-1}\right) 
= 
\transp B
\,,
\qquad
(\transp{\widehat{B}} \otimes \transp{\widehat{B}}) \left(B^{-1}\right) =\transp{ B}\,,
\end{equation}
\begin{equation}
(\widehat{B}^{-1} \otimes \widehat{B}^{-1})(\transp{B}) =B^{-1} \,,
\qquad
(\transp{\widehat{B}}^{-1} \otimes \transp{\widehat{B}}^{-1})
(\transp{B})=B^{-1} 
\,.
\end{equation}
\end{proposition}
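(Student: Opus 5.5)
The plan is to evaluate both sides of each identity on arbitrary arguments and reduce everything to the defining property $\widehat{B^{-1}} = \widehat{B}^{-1}$, together with the identification $\transp{\widehat B} = \widehat{\transp B}$ established above.

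First I will fix conventions. Regard $B^{-1}\in E\otimes E$ as a bilinear form on $E^\ast$, so that
\[
B^{-1}(\alpha,\beta) = \langle \beta, \widehat{B}^{-1}(\alpha)\rangle .
\]
For a linear map $f\colon E\to E^\ast$, the pushforward $(f\otimes f)(T)$ of $T\in E\otimes E$ is an element of $E^\ast\otimes E^\ast$. It is evaluated on $(u,v)\in E\times E$ by
\[
(f\otimes f)(T)(u,v) = T(\transp f u, \transp f v).
\]
This follows by checking on decomposables $T = a\otimes b$, since $\langle f(a),u\rangle = \langle \transp f u, a\rangle$. Here $\transp f\colon E\cong E^{\ast\ast}\to E^\ast$, so for $f=\widehat B$ we have $\transp f = \transp{\widehat B}$, and for $f=\transp{\widehat B}$ we have $\transp f = \widehat B$.

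Next comes the direct computation. For the first identity,
\[
(\widehat B\otimes\widehat B)(B^{-1})(u,v)
= \langle \transp{\widehat B}v, \widehat B^{-1}\transp{\widehat B}u\rangle .
\]
Moving the transpose across the pairing, this equals
\[
\langle v, \widehat B\,\widehat B^{-1}\transp{\widehat B}u\rangle = \langle v,\transp{\widehat B}u\rangle = B(v,u) = \transp B(u,v).
\]
For the second identity,
\[
(\transp{\widehat B}\otimes\transp{\widehat B})(B^{-1})(u,v)
= B^{-1}(\widehat B u,\widehat B v)
= \langle \widehat B v, u\rangle = B(v,u),
\]
which is again $\transp B(u,v)$.

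The last two identities follow by applying $\widehat B^{-1}\otimes\widehat B^{-1}$, respectively $\transp{\widehat B}^{-1}\otimes\transp{\widehat B}^{-1}$, to the first two. This uses functoriality, $(f\otimes f)\circ(g\otimes g) = (fg)\otimes(fg)$.

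The only real obstacle is bookkeeping: keeping track of which slot is contracted, and the transpose that appears when pushing a tensor forward along a map into a dual space. I will check these on decomposable tensors (or in matrices, using $\operatorname{M}(\widehat B) = \transp{\operatorname{M}(B)}$ from the Remark above) to make sure the result is $\transp B$ and not $B$.
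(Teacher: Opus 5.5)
Your proof is correct and follows essentially the same route as the paper. Both evaluate on arbitrary vectors, use $B^{-1}(\alpha,\beta)=\langle\beta,\widehat B^{-1}(\alpha)\rangle$ to collapse $\widehat B^{-1}\widehat B$, and get the lower row by applying the inverse maps to the upper row. The one difference is convention: the paper reads $(\widehat B\otimes\widehat B)(B^{-1})(v,w)$ as the pullback $B^{-1}(\widehat B v,\widehat B w)$, so its displayed computation is your second identity under your pushforward convention, which is harmless because both upper-row identities have the same right-hand side $\transp B$.
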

\begin{proof}
We will just prove one expression, the others can be proven analogously. For any $v,w \in E$, we have
\begin{align*}
    (\widehat{B} \otimes \widehat{B}) (B^{-1}) (v,w) 
    &= B^{-1}(\widehat B(v), \widehat B(w)) 
    = \left\langle \widehat B(w), \widehat{B}^{-1}(\widehat B(v))\right\rangle \\
    &= \left\langle \widehat B(w), v\right\rangle 
    = B(w,v) 
    = \transp{B}(v,w)\,.
\end{align*}
The two expressions from the lower row can be proven by applying $(\widehat B^{-1}\otimes \widehat B^{-1})$ or $(\transp{\widehat B}^{-1}\otimes \transp{\widehat B}^{-1})$ to the ones above.
\end{proof}

To simplify notation, if $T$ is any 2-covariant (or 2-contravariant) tensor, we will usually write $\Ker T$ instead of $\Ker \widehat T$ to denote the kernel of the associated linear map.

\begin{remark}
Symplectic vector spaces and inner products are two extremal cases of non-degenerate $2$-covariant tensors, corresponding to the cases where $B$ is purely skew-symmetric and purely symmetric (and positive-definite), respectively. 
\end{remark}


\begin{proposition}
\label{prop:inner_product_non_deg}
Let $B = S + \omega$
be the decomposition of a bilinear form into its
symmetric and skew-symmetric components. We have:
\begin{enumerate}[\rm (i)]
\item 
$
\Ker S \cap \Ker \omega \subset \Ker B
$.   
\item 
The equality 
$
\Ker S \cap \Ker \omega = \Ker B
$
holds if $S$ is (positive or negative) semidefinite.
\item 
If $S$ is indefinite, 
one can find an $\omega$ such that the inclusion is strict.
\item If $E=\Ker \omega \oplus \Ker S$, then $B$ is non-degenerate.
\end{enumerate}
\end{proposition}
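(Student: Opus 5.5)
I will treat the four items separately, working throughout with the identities $B(v,w) = S(v,w) + \omega(v,w)$ and $B(w,v) = S(v,w) - \omega(v,w)$. These give
\[
S(v,w) = \tfrac12\bigl(B(v,w)+B(w,v)\bigr), \qquad \omega(v,w) = \tfrac12\bigl(B(v,w)-B(w,v)\bigr).
\]

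For (i), the argument is immediate. If $v \in \Ker S \cap \Ker \omega$, then $B(v,\cdot) = S(v,\cdot) + \omega(v,\cdot) = 0$, so $v \in \Ker B$.

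For (ii), I take $v \in \Ker B$, so that $B(v,w)=0$ for every $w$. Evaluating at $w=v$ gives
\[
0 = B(v,v) = S(v,v) + \omega(v,v) = S(v,v),
\]
because $\omega$ is skew. Now I use semidefiniteness. Assume without loss of generality that $S \geq 0$. The Cauchy--Schwarz inequality for positive semidefinite symmetric forms gives $|S(v,w)|^2 \le S(v,v)\,S(w,w) = 0$, hence $v \in \Ker S$. Then $\omega(v,\cdot) = B(v,\cdot) - S(v,\cdot) = 0$, so $v \in \Ker\omega$ as well. Combined with (i), this gives equality.

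For (iii), I will construct an explicit example. If $S$ is indefinite, there is a vector $u$ with $S(u,u) = 0$ and $S(u,\cdot) \neq 0$: take $e_1, e_2$ with $S(e_1,e_1)>0$ and $S(e_2,e_2)<0$, and choose a suitable combination $u=e_1+te_2$ that is isotropic. Since $S(u,\cdot)\neq 0$, the vector $u \notin \Ker S$. I then look for a skew form $\omega$ with $\omega(u,\cdot) = -S(u,\cdot)$. This requires only that the covector $\alpha = -S(u,\cdot)$ satisfies $\alpha(u)=0$, which holds precisely because $S(u,u)=0$. Concretely, I pick $x$ with $S(u,x)=1$ and set
\[
\omega = x^\flat \wedge \ldots \quad\text{or, more simply,}\quad \omega = \beta\wedge\alpha,
\]
where $\beta$ is any covector with $\beta(u)=1$. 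Then $\omega(u,\cdot) = \beta(u)\alpha - \alpha(u)\beta = \alpha$. With this choice $B(u,\cdot)=0$, so $u \in \Ker B$ but $u \notin \Ker S \cap \Ker\omega$.

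The main obstacle is the existence of such a $u$. I plan to verify it carefully in the plane spanned by $e_1,e_2$. Some isotropic combination $u=e_1+te_2$ exists by the intermediate value theorem. If $S(u,\cdot)$ vanished on that plane, then $S$ restricted to the plane would have $u$ in its radical. But a form on a 2-dimensional space with a nonzero radical has rank at most one, so it cannot be both positive on $e_1$ and negative on $e_2$. This contradiction shows $S(u,\cdot)\neq 0$.

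For (iv), suppose $E=\Ker\omega\oplus\Ker S$ and take $v\in\Ker B$. I write $v = a + h$ with $a \in \Ker\omega$ and $h \in \Ker S$. For any $w$,
\[
0 = B(v,w) = S(a,w) + \omega(h,w).
\]
I test this against $w \in \Ker S$ and $w \in \Ker\omega$ separately, using the symmetry of $S$ and the skew-symmetry of $\omega$.

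Taking $w \in \Ker S$ gives $S(a,w)=0$ and hence $\omega(h,w)=0$. Taking $w\in\Ker\omega$ gives $\omega(h,w) = -\omega(w,h) = 0$ and hence $S(a,w)=0$. Since $S(a,\cdot)$ also vanishes on $\Ker S$ by symmetry of $S$, and $E$ is the direct sum of the two kernels, $S(a,\cdot)=0$. So $a\in\Ker S\cap\Ker\omega=\{0\}$, which gives $a=0$. Similarly $\omega(h,\cdot)$ vanishes on both summands, so $h\in\Ker\omega\cap\Ker S = \{0\}$. Therefore $v=0$, and $B$ is non-degenerate.
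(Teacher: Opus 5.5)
Your proof is correct and follows the paper's argument item by item. Part (i) is immediate, and (ii) uses $S(v,v)=B(v,v)=0$ together with semidefiniteness. In (iv) you decompose $v$ along $\Ker\omega\oplus\Ker S$ and test against each summand, proving inline the non-degeneracy of $S|_{\Ker\omega}$ and $\omega|_{\Ker S}$ that the paper simply asserts. Your construction in (iii) takes $u$ with $S(u,u)=0$ but $S(u,\cdot)\neq 0$, and $\omega=\beta\wedge\alpha$ with $\alpha=-S(u,\cdot)$ and $\beta(u)=1$; this is a correct basis-free version of the paper's one-line hint ``expressing $S$ in an appropriate basis''. You can delete the unused choice of $x$ and the abandoned fragment before $\beta\wedge\alpha$.
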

\begin{proof}
The first statement is obvious.
If $v \in E$ and $\iota_v (\omega + S) = 0$, we have in particular that 
$S(v, v) = S(v,v) + \omega(v,v) = 0$.
Since $S$ is semidefinite and $\omega$ skew-symmetric, $v \in \Ker S$, and
hence $\iota_v \omega = 0$.
The third statement is easily proved by expressing $S$ in an appropriate basis. 
For the fourth statement, let $u \in \Ker B$ and decompose it as $u = v + h$ with $v \in \Ker \omega$ and $h \in \Ker S$. 
Since $E = \Ker \omega \oplus \Ker S$, we have $\Ker \omega \cap \Ker S = 0$, so the restrictions $\omega|_{\Ker S}$ and $S|_{\Ker \omega}$ are non-degenerate. 
For any $h' \in \Ker S$, we have that
$$
0 = B(v+h, h') = \omega(v+h, h') + S(v+h, h') = \omega(h, h')+S(h',v+h)=\omega(h, h')\,,
$$
which implies that $h = 0$, by the non-degeneracy of $\omega|_{\Ker S}$. Analogously, testing against any $v' \in \Ker \omega$ yields $S(v, v') = 0$, which implies $v = 0$. Thus $u = 0$, and so $\Ker B = 0$.
\end{proof}

\begin{corollary}
\label{cor:inner_product_non_deg}
If $S$ is an inner product, 
then $B=S+\omega$ is non-degenerate, for any skew-symmetric 2-covariant tensor $\omega$.  
\end{corollary}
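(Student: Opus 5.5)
The corollary follows from Proposition~\ref{prop:inner_product_non_deg}, and I would invoke item~(ii) directly rather than item~(iv). An inner product $S$ is positive definite, hence in particular positive semidefinite. Item~(ii) therefore applies and gives
\[
\Ker B = \Ker S \cap \Ker \omega
\]
for every skew-symmetric $\omega$. Since $S$ is an inner product, it is non-degenerate, that is, $\Ker S = 0$. Consequently $\Ker B = 0$. The map $\widehat B \colon E \to E^\ast$ is then an injective linear map between spaces of equal finite dimension, so it is an isomorphism. This means $B$ is non-degenerate.

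For completeness, one can also argue directly without item~(ii). If $\iota_v B = 0$, evaluate on $v$ itself. The term $\omega(v,v)$ vanishes by skew-symmetry, so we are left with $S(v,v)=0$. Positive definiteness of $S$ then forces $v=0$.

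There is no real obstacle here. The only point that needs a moment's care is that finite-dimensionality is what upgrades injectivity of $\widehat B$ to bijectivity, and this is standing throughout the paper.

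An alternative route through item~(iv) is less direct. It would require noting that $\Ker S=0$ and that $E=\Ker\omega\oplus 0$ fails unless $\omega=0$. For that reason I would use item~(ii).
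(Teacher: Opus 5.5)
Your proposal is correct and follows the paper's intended route. The corollary is stated, without separate proof, as an immediate consequence of Proposition~\ref{prop:inner_product_non_deg}(ii): positive definiteness gives $\Ker S = 0$, hence $\Ker B = \Ker S \cap \Ker \omega = 0$, and your direct check $B(v,v) = S(v,v)$ is just the proof of item (ii) specialised to this case.
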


\begin{proposition} Let $B \in E^\ast \otimes E^\ast$ be non-degenerate, and let $\omega$ denote its skew-symmetric part. Then, the skew-symmetric part of $B^{-1}$ is
$$\Lambda =-\left(\sksym{2} \widehat B^{-1}\right)(\omega)= -\left(\sksym{2} \transp{\widehat B}^{-1}\right) (\omega)\,.$$
\end{proposition}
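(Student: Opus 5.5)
The plan is to apply Proposition~\ref{prop:description_of_inverse} to the decomposition $B = S + \omega$ and then take skew-symmetric parts. The key observation is that the maps $\widehat{B}^{-1}\otimes\widehat{B}^{-1}$ and $\transp{\widehat B}^{-1}\otimes \transp{\widehat B}^{-1}$, being of the form $\phi\otimes\phi$ for a single linear map $\phi\colon E^\ast\to E$, commute with the transposition involution $\mathsf t$. Hence they preserve both the symmetric and the skew-symmetric subspaces. On skew-symmetric tensors they act as $\sksym{2}\phi$, under the usual identification of $\sksym{2}$ with skew tensors (the same convention implicit in Lemma~\ref{lemma:evaluation_exterior_algebra}).

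Concretely, I first note that $\transp B = S - \omega$. By Proposition~\ref{prop:description_of_inverse} we then have
\[
B^{-1} = (\widehat{B}^{-1}\otimes \widehat{B}^{-1})(S) - (\widehat{B}^{-1}\otimes \widehat{B}^{-1})(\omega)\,.
\]
The first summand is symmetric and the second is skew-symmetric, because $\phi\otimes\phi$ commutes with $\mathsf t$. Indeed, on decomposable elements $\mathsf t\big((\phi\otimes\phi)(\alpha\otimes\beta)\big) = \phi\beta\otimes\phi\alpha = (\phi\otimes\phi)(\mathsf t(\alpha\otimes\beta))$, and the identity extends by linearity. By uniqueness of the splitting into symmetric and skew-symmetric parts, the skew-symmetric part of $B^{-1}$ is
\[
\Lambda = -(\widehat B^{-1}\otimes\widehat B^{-1})(\omega) = -\big(\sksym{2}\widehat B^{-1}\big)(\omega)\,.
\]
The same argument, using the other identity $(\transp{\widehat B}^{-1}\otimes\transp{\widehat B}^{-1})(\transp B) = B^{-1}$ from Proposition~\ref{prop:description_of_inverse}, gives the second expression, $\Lambda = -\big(\sksym{2}\transp{\widehat B}^{-1}\big)(\omega)$.

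There is no real obstacle here. The only point requiring care is bookkeeping: one must confirm that the identification of $\sksym{2}$ with skew-symmetric 2-tensors used for $\sksym{2}\phi$ matches the one used for $\omega$ and $\Lambda$, so that no stray factor of $2$ or sign appears. Since $\phi\otimes\phi$ restricted to skew tensors is exactly $\sksym{2}\phi$ under either standard convention, with the convention applied consistently on both sides, the identity holds regardless of which convention is chosen.
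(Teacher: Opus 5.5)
Your proof is correct and follows essentially the paper's argument: both start from the identity $B^{-1}=(\widehat B^{-1}\otimes\widehat B^{-1})(\transp B)$ of Proposition~\ref{prop:description_of_inverse} and read off the skew-symmetric part using $\transp B = S-\omega$. The only difference is cosmetic. The paper evaluates $\tfrac12\big(B^{-1}(\alpha,\beta)-B^{-1}(\beta,\alpha)\big)$ directly, whereas you use that $\phi\otimes\phi$ commutes with $\mathsf t$ together with uniqueness of the splitting, which also gives the symmetric part $\Sigma=\big(\sym{2}\widehat B^{-1}\big)(S)$ at no extra cost.
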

\begin{proof} It follows from Proposition  \ref{prop:description_of_inverse}, since for every $\alpha, \beta \in E^\ast$ we have
\begin{align*}
    \left( \frac{B^{-1}  - \transp{B}^{-1}}{2}\right)(\alpha, \beta) &= \frac{B^{-1}(\alpha, \beta) - B^{-1}(\beta, \alpha)}{2}\\
    &= \frac{\transp{B}({{\widehat{B}}}^{-1}(\alpha),{{\widehat{B}}^{-1}(\beta)}) - \transp{B}({\widehat{B}}^{-1}(\beta),{\widehat{B}}^{-1}(\alpha)) }{2}\\
    &= -\omega({\widehat{B}}^{-1}(\alpha), {\widehat{B}}^{-1}(\beta) ) = -\left(\left( \sksym{2} {\widehat B}^{-1}\right) (\omega)\right)(\alpha, \beta)\,,
\end{align*}
and analogously for the other expression.
\end{proof}
As a direct consequence, the skew-symmetric part of $\transp{B}^{-1}$ is $$-\Lambda = \left(\sksym{2} \widehat B^{-1}\right)(\omega)=\left( \sksym{2} \transp{\widehat B}^{-1}\right) (\omega)\,.$$

\begin{remark}
    For any $\alpha_1,\alpha_2 \in E^*$ we have
    $$\Lambda(\alpha_1,\alpha_2) = -\omega(\widehat B^{-1}(\alpha_1),\widehat B^{-1}(\alpha_2))=-\omega(\transp{}\widehat B^{-1}(\alpha_1),\transp{}\widehat B^{-1}(\alpha_2))\,.$$ From this, it follows that the linear map associated to the bivector $\Lambda$ is given by
     $$\widehat \Lambda = -\transp{ \widehat B}^{-1} \circ \widehat \omega\circ  \widehat B^{-1}=-{ \widehat B}^{-1} \circ \widehat \omega\circ  \transp{}\widehat B^{-1}\,,$$ and hence, $\alpha \in \Ker \Lambda $ if, and only if, $\alpha \in \widehat B(\Ker \omega)= \widehat S(\Ker\omega)$.

\end{remark}

We have analogous results for the symmetric part of $B^{-1}$.
\begin{proposition} Let $B \in E^\ast \otimes E^\ast$ be non-degenerate, and let $S$ denote its symmetric part. Then, the symmetric part of ${B}^{-1}$ is 
$$\Sigma = \left(\textstyle\sym{2} {\widehat B}^{-1}\right) (S)=\left(\textstyle\sym{2} \transp{\widehat B}^{-1}\right) (S)\,.$$
\end{proposition}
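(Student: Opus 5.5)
The argument mirrors the proof of the preceding proposition for the skew-symmetric part, with the sign changes appropriate to symmetric tensors. By definition the symmetric part of $B^{-1}$ is $\Sigma = \tfrac12\bigl(B^{-1} + \transp{(B^{-1})}\bigr)$. For $\alpha,\beta \in E^\ast$ we have $\transp{(B^{-1})}(\alpha,\beta) = B^{-1}(\beta,\alpha)$, so
\[
\Sigma(\alpha,\beta) = \tfrac12\bigl(B^{-1}(\alpha,\beta) + B^{-1}(\beta,\alpha)\bigr).
\]

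For the first expression, use the lower-left identity of Proposition~\ref{prop:description_of_inverse}, namely $(\widehat{B}^{-1}\otimes\widehat{B}^{-1})(\transp B) = B^{-1}$. It gives
\[
B^{-1}(\alpha,\beta) = \transp B\bigl(\widehat B^{-1}(\alpha),\widehat B^{-1}(\beta)\bigr) = B\bigl(\widehat B^{-1}(\beta),\widehat B^{-1}(\alpha)\bigr).
\]
Write $v = \widehat B^{-1}(\alpha)$ and $w = \widehat B^{-1}(\beta)$. Then
\[
\Sigma(\alpha,\beta) = \tfrac12\bigl(B(w,v) + B(v,w)\bigr) = S(v,w).
\]
This is exactly $\bigl((\sym{2}\widehat B^{-1})(S)\bigr)(\alpha,\beta)$, once we interpret $\sym{2}\widehat B^{-1}$ as acting on a symmetric bilinear form by pulling back through $\widehat B^{-1}$ in both arguments. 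That is the same convention the paper uses for $\sksym{2}$ in the previous proposition.

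For the second expression, use the lower-right identity $(\transp{\widehat B}^{-1}\otimes\transp{\widehat B}^{-1})(\transp B) = B^{-1}$ and repeat the computation. With $v' = \transp{\widehat B}^{-1}(\alpha)$ and $w' = \transp{\widehat B}^{-1}(\beta)$, the symmetrization now yields $S(v',w')$. Since $\transp{\widehat B}^{-1}$ is again an isomorphism, this is $\bigl((\sym{2}\transp{\widehat B}^{-1})(S)\bigr)(\alpha,\beta)$.

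There is no real obstacle here. The only care needed is bookkeeping: tracking which argument order arises from $\transp B$, and checking that symmetrization removes the $\omega$ contributions. These cancel by skew-symmetry, in contrast with the skew case where the $S$ contributions cancel instead.
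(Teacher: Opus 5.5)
Your proof is correct and follows the paper's approach. The paper says only that the argument is analogous to the skew-symmetric case, which rests on the lower-row identities of Proposition~\ref{prop:description_of_inverse}; you apply those identities and symmetrize instead of skew-symmetrizing. You also spell out the second expression via $(\transp{\widehat B}^{-1}\otimes\transp{\widehat B}^{-1})(\transp B) = B^{-1}$, a step the paper leaves implicit.
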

\begin{proof} The proof is analogous to that presented for the skew-symmetric part.
\end{proof}
As a direct consequence, the symmetric part of $\transp{B}^{-1}$ is also $\Sigma$.

\begin{remark}
    For any $\alpha_1, \alpha_2 \in E^*$ we have
    $$\Sigma(\alpha_1,\alpha_2) = S(\widehat B^{-1}(\alpha_1),\widehat B^{-1}(\alpha_2))=S(\transp{}\widehat B^{-1}(\alpha_1),\transp{}\widehat B^{-1}(\alpha_2))\,.$$ From this,
    $$\widehat \Sigma = \transp{\widehat B}^{-1} \circ \widehat S \circ \widehat B^{-1}=\widehat B^{-1}\circ \widehat S \circ \transp{\widehat B}^{-1}\,,$$
    and so 
    $\alpha \in \Ker \Sigma$ if, and only if, $\alpha \in \widehat \omega(\Ker S)$.
\end{remark}

\subsection{Asymmetry and orthogonality}

The classification of skew-symmetric and symmetric bilinear forms is well known. 
There is a less known classification of all the orbits of the 
$\mathrm{GL}(E)$-action on $E^\ast \otimes E^\ast$. 
This classification is done via the \emph{asymmetry} isomorphism
\cite{cw2001journalofpureandappliedalgebra, rie1974journalofalgebra}. 
\begin{definition}\label{def:asymmetry}
Let $B$ be a \emph{non-degenerate} bilinear form on~$E$.
The \dfn{asymmetry} of~$B$ is the linear isomorphism
$$
J \coloneqq 
\transp{\widehat{B}}^{-1} \circ \widehat B \colon 
E \longto E
\,.
$$
Its inverse map is 
$J^{-1} = \widehat B^{-1} \circ \transp{\widehat{B}}$.
\end{definition}

By definition, 
$$
B(u,J(v)) = B(v,u)\,,
$$
for all $u,v\in E$, and so
$$
B(J(u),J(v)) = B(u,v)
\,.
$$
Also,
$$\widehat{\omega} = \frac{1}{2}(\widehat{B} - \transp{\widehat{B}}) = \frac{1}{2}\transp{\widehat{B}}(J - \Id)\,,\qquad \text{and} \qquad\widehat{S} = \frac{1}{2}(\widehat{B} + \transp{\widehat{B}}) = \frac{1}{2}\transp{\widehat{B}}(J + \Id)\,.$$
 Hence, $J=\Id$ if, and only if, $B$ is symmetric, and $J=-\Id$ if, and only if, $B$ is skew-symmetric.
More specifically,
    $$v\in \Ker \omega \iff J(v) = v\,, \qquad \text{and}\qquad
        v\in \Ker S \iff J(v) = -v\,.
    $$
In particular, this result implies that $\Ker\omega$ and $\Ker S$ are always in direct sum when $B$ is non-degenerate. 
Also, $E=\Ker \omega\oplus \Ker S$ if, and only if, $E$ is a direct sum of the $\pm 1$-eigenspaces of $J$, which is equivalent to $J$ being an involution, i.e.\ $J^2=\Id$. 

\begin{remark}
When $E$ is even-dimensional and $(\omega, g)$ defines a Kähler structure on~$E$, 
the asymmetry of $B = g + \omega$ corresponds precisely to the induced complex structure~$J$
(under the appropriate sign convention).
\end{remark}

Similar results follow for the transpose of $J$, which is
$$
\transp{J}= \transp{\widehat B}\circ \widehat B^{-1}\colon E^* \longto E^*\,.
$$
We have
$$\widehat{\Lambda} = \frac{1}{2}({\widehat{B}^{-1}}-\transp{\widehat{B}^{-1}}) = \frac{1}{2}\transp{\widehat{B}}^{-1}( \transp{J}- \Id)\,, \qquad\widehat{\Sigma} = \frac{1}{2}(\transp{\widehat{B}^{-1}}+\widehat{B}^{-1}) = \frac{1}{2}\transp{\widehat{B}}^{-1}(\transp{J}+\Id )\,,$$
so that
$$ 
\alpha\in \Ker  \Lambda \Longleftrightarrow \transp{J}(\alpha) =\alpha\,,
\quad \text{and} \quad 
\alpha\in \Ker  \Sigma \Longleftrightarrow \transp{J}(\alpha) =-\alpha\,.
$$  
And thus, both kernels are in direct sum and $E^* = \Ker  \Lambda\oplus \Ker  \Sigma$ if, and only if, $\transp{J}^2= \Id$.
As $J^2=\Id$ is equivalent to $\transp{J}^2= \Id$, we have
\begin{equation}\label{eq:direct_sum_iff_dual}
    E=\Ker \omega \oplus \Ker S\Longleftrightarrow E^* = \Ker  \Lambda\oplus \Ker  \Sigma\,.
\end{equation}
This case will be of particular importance in later sections.

The notion of \emph{orthogonality} is fundamental in symplectic and Riemannian geometry, as well as in the study of Poisson and Jacobi manifolds 
(see \cite{Vai_94} for the Poisson case, and \cite{ILM_97, Vit_2018} for the Jacobi case). 
We now define and study the general notion of orthogonality for a 2-covariant tensor. 
It should be noted that these notions of orthogonality with respect to a not necessarily symmetric or skew-symmetric non-degenerate 2-covariant tensor have already been studied and used in several works (see, for instance,~\cite{LL_19a, jl2025journalofgeometryandphysics}), we provide a detailed account here for completeness.

\begin{definition} Let $T$ be a bilinear form on $E$ and $W \subseteq E$ be a subspace. We define its \dfn{$T$-orthogonal complement} as 
\[
W^{\perp, {T}} = 
\{v \in E \mid 
T(v, w) = 0\ \forall\, w \in W\} = 
\widehat{T}^{-1} \left( W^\circ \right)\,, 
\]
where $W^\circ \subseteq E^\ast$ stands for the annihilator of $W$ and $\widehat{T}^{-1}$ denotes the preimage under $\widehat{T}$. 
\end{definition}

\begin{remark}
    As, in general, a bilinear form is not symmetric nor skew-symmetric,
    we can define another notion of orthogonality by contracting on the right. This coincides with the orthogonal with respect to the transpose $\transp{\,T}$. That is, the \dfn{$\transp{\,T}$-orthogonal complement} of $W\subseteq E$ is 
$$
W^{\perp, \transp{\,T}} = \{v \in E \ \mid\ T(w, v) = 0\,,\ \forall\, w \in W\} = \transp{\widehat{T}}^{-1}\left( W^\circ \right)\,. 
$$
Notice that, by the definition of annihilator, we also have
$$W^{\perp,T} = \left(\transp{\widehat T}(W)\right)^{\circ}\qquad \text{and} \qquad W^{\perp,\transp{\,T}}=\left(\widehat T(W)\right)^{\circ}\,.$$
\end{remark}

For the rest of the section we will deal with the orthogonal complements defined by a non-degenerate bilinear form, that we denote by $B$.
To simplify notation we assume $B$ fixed and write
\[
W^{\perp} = W^{\perp, B} \qquad \text{and} \qquad \leftperp{W} = W^{\perp, \transp{B}}\,.
\]
Observe that, using that $B$ is non-degenerate and that $\dim W^\circ = \codim W$, we have
\begin{equation}\label{eq:dimensions_orth}
    \dim {}^{\perp}W=\dim W^{\perp} = \codim W\,,
\end{equation}
from which it follows that $\dim E= \dim W+\dim W^\perp=\dim W+\dim \leftperp W$.

We study the relation between the two notions of orthogonality in the following proposition.

\begin{proposition}
\label{prop:relation_orthogonals} For every $W \subseteq E$, we have the following: 
\begin{enumerate}[\rm (i)]
    \item${W}^\perp = \leftperp (J^{-1}(W)) = J^{-1}(\leftperp W)$ and $\leftperp{W} =J(W)^{\perp} = J(W^{\perp})$, where $J = \transp{\widehat{B}}^{-1} \circ \widehat{B}$ is the asymmetry of $B$.
    \item $\leftperp{(W^{\perp})} = W$ and $ (\leftperp{W})^{\perp} = W$.
    \item $W^{\perp \perp} = J^{-1}(W)$, and $\leftperp{\leftperp{W}} = J(W)$.
\end{enumerate}
\end{proposition}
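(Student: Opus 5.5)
The whole proposition reduces to one identity: the defining relation of the asymmetry, $B(u,J(v)) = B(v,u)$ for all $u,v\in E$. I would first rewrite it in the equivalent form
$$B(J^{-1}(w),u) = B(u,w)\qquad\text{for all } u,w\in E,$$
obtained by substituting $v=J^{-1}(w)$. With both forms available, items (i)--(iii) follow in that order, each from the previous one. The only prerequisites are the definitions of $W^\perp$ and $\leftperp W$ and the dimension count \eqref{eq:dimensions_orth}.

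\textbf{Item (i).} I unwind the definitions. A vector $v$ lies in $W^\perp$ if and only if $B(v,w)=0$ for every $w\in W$. Writing $w = J(w')$ with $w'\in J^{-1}(W)$, the defining relation gives $B(v,J(w')) = B(w',v)$. So the condition is $B(w',v)=0$ for all $w'\in J^{-1}(W)$, which says exactly that $v\in\leftperp(J^{-1}(W))$.

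For the second equality in (i), I set $u=J(v)$ and use $B(J(v),J(w))=B(v,w)$. This shows $v\in W^\perp$ if and only if $J(v)\in J(W)^\perp$, that is, $J(W^\perp)=J(W)^\perp$. The same computation with $\transp B$ gives $J(\leftperp W)=\leftperp(J(W))$. Combining these two facts with the first equality yields all four identities in (i). For example, $\leftperp W = \leftperp(J^{-1}(J(W))) = J(W)^\perp$, where the last step is the first equality applied to the subspace $J(W)$.

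\textbf{Item (ii).} The inclusion $W\subseteq\leftperp(W^\perp)$ is immediate from the definitions. If $w\in W$ and $v\in W^\perp$, then $B(v,w)=0$, which is precisely the condition that $w$ is right-orthogonal to $W^\perp$. By \eqref{eq:dimensions_orth},
$$\dim \leftperp(W^\perp) = \codim W^\perp = \dim W,$$
so the inclusion is an equality. The identity $(\leftperp W)^\perp = W$ follows symmetrically.

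\textbf{Item (iii).} Using (i) and then (ii),
$$W^{\perp\perp} = \bigl(W^\perp\bigr)^\perp = J^{-1}\bigl(\leftperp(W^\perp)\bigr) = J^{-1}(W).$$
Similarly,
$$\leftperp{\leftperp W} = \leftperp(\leftperp W) = J\bigl((\leftperp W)^\perp\bigr) = J(W).$$

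The only real obstacle is bookkeeping: keeping track of whether $J$ or $J^{-1}$ appears, and in which slot of $B$ each vector sits. I would fix the convention $B(u,J(v))=B(v,u)$ from the outset and check each substitution against it.
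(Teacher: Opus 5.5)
Your proof is correct and follows the same route as the paper. Item (i) comes from the defining relation $B(u,J(v))=B(v,u)$ of the asymmetry, item (ii) from the inclusion $W\subseteq \leftperp(W^\perp)$ plus the dimension count \eqref{eq:dimensions_orth}, and item (iii) by combining (i) and (ii); your auxiliary invariance $B(J(u),J(v))=B(u,v)$ just spells out the remaining equalities in (i), which the paper leaves implicit.
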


\begin{proof} \leavevmode
\begin{enumerate}[\rm (i)]
    
    \item Indeed, both equalities follow from the defining equality of $J$, which can be written as
    \[
    B(u, J(v)) = (\transp{B})(u, v)\,.  
    \]
    \item It is sufficient to prove the first equality. As the dimensions match, it is enough to show that $W \subseteq {}^{\perp}(W^{\perp})$. By definition, if $v \in W^{\perp}$ we have $B(v, w) = 0$, for all $w \in W$ and, in particular, this implies that $w \in {}^{\perp}(W^{\perp})$.
    \item This follows directly from the last two results. 
\end{enumerate}
\end{proof}

Next, we study the properties of these orthogonal complements with respect to the intersection and the sum of vector subspaces.

\begin{proposition}
\label{prop:2.14}
For all subspaces $W, W_1,W_2\subseteq E$, the following identities hold
\begin{enumerate}[(i)]
    \item $(W_1 \cap W_2)^{\perp} = W_1^{\perp} + W_2^{\perp}$ and $(W_1+ W_2)^{\perp} = W_1^{\perp} \cap W_2^{\perp}$. The same identities hold for the left orthogonal.
    \item If $\omega$ and $S$ denote the skew-symmetric and symmetric part of $B$, we have $W^{\perp} \cap \leftperp{W} = W^{\perp, \omega} \cap W^{\perp, S}$. In particular, $W^{\perp} \cap \leftperp{W} \cap \Ker S = W^{\perp, \omega} \cap \Ker S$.
    \item $(W^{\perp} +\leftperp{W})^{\perp} = J^{-1}(W) \cap W$ and $\leftperp{(W^{\perp} + {\leftperp{W}})}= J(W) \cap W$.
    \item  $B|_{W}$ is non-degenerate if, and only if, $E = W \oplus W^{\perp}$.
\end{enumerate}
    
\end{proposition}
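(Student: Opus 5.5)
The four items are independent, and each reduces to annihilator calculus together with Proposition~\ref{prop:relation_orthogonals}. I will take them in order.

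For (i), I use the description $W^{\perp} = \widehat{B}^{-1}(W^\circ)$, where $\widehat B^{-1}$ is now a genuine inverse because $B$ is non-degenerate. Two standard annihilator identities do the work: $(W_1+W_2)^\circ = W_1^\circ\cap W_2^\circ$ and $(W_1\cap W_2)^\circ = W_1^\circ + W_2^\circ$, the latter valid in finite dimension. Since an isomorphism commutes with both sums and intersections, applying $\widehat B^{-1}$ gives both identities for $W^\perp$. The left orthogonal is handled identically, replacing $\widehat B$ by $\transp{\widehat B}$.

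For (ii), take $v \in W^\perp\cap\leftperp W$. Then $B(v,w)=0$ and $B(w,v)=0$ for all $w\in W$. Half the difference and half the sum give $\omega(v,w)=0$ and $S(v,w)=0$, so $v \in W^{\perp,\omega}\cap W^{\perp,S}$. Conversely, adding or subtracting these two conditions recovers the conditions for $B$ and $\transp B$. For the "in particular" statement, I intersect both sides with $\Ker S$ and note that $\Ker S\subseteq W^{\perp,S}$ holds automatically.

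For (iii), I apply (i) to obtain $(W^\perp+\leftperp W)^\perp = W^{\perp\perp}\cap(\leftperp W)^\perp$. Proposition~\ref{prop:relation_orthogonals}(iii) gives $W^{\perp\perp}=J^{-1}(W)$, and part (ii) of the same proposition gives $(\leftperp W)^\perp = W$. Together these yield $J^{-1}(W)\cap W$. The left version is symmetric: $\leftperp{(W^\perp)}=W$ and $\leftperp{\leftperp W}=J(W)$ combine to give $J(W)\cap W$.

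For (iv), note that $W\cap W^\perp$ consists of the $w\in W$ with $B(w,w')=0$ for all $w'\in W$. This is exactly the kernel of $\widehat{B|_W}\colon W\to W^*$. Hence $B|_W$ is non-degenerate if and only if $W\cap W^\perp=0$. By \eqref{eq:dimensions_orth} we have $\dim W+\dim W^\perp=\dim E$, so trivial intersection is equivalent to $E=W\oplus W^\perp$.

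I expect no real obstacle here. The only point needing care is (iv): it requires checking that the kernel of the restricted map is computed with the correct contraction side, since $B$ is not symmetric. With the paper's convention $W^\perp=\{v : B(v,w)=0\}$, this side matches $\widehat{B|_W}$. Moreover, non-degeneracy of $B|_W$ is insensitive to the side, because a square matrix and its transpose are invertible together.
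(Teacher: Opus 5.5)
Your proposal is correct and follows the paper's proof essentially item by item. Item (i) uses $W^\perp=\widehat B^{-1}(W^\circ)$ together with the annihilator identities. Item (ii) adds and subtracts the conditions coming from $B$ and $\transp{B}$. Item (iii) combines (i) with Proposition~\ref{prop:relation_orthogonals}. Item (iv) uses $W\cap W^\perp=\{0\}$ together with the dimension count \eqref{eq:dimensions_orth}.

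Your two extra remarks only make explicit what the paper leaves implicit: that $\Ker S\subseteq W^{\perp,S}$ yields the ``in particular'' claim in (ii), and that the contraction side in (iv) matches $\widehat{B|_W}$.
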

\begin{proof}\leavevmode
    \begin{enumerate}[(i)]
       \item We have
       $$(W_1\cap W_2)^\perp = \widehat B^{-1}((W_1\cap W_2)^{\circ})=\widehat B^{-1}(W_1^{\circ}+W_2^{\circ})=\widehat B^{-1}(W_1^{\circ})+\widehat B^{-1}(W_2^{\circ})\,,$$
       where we have used the properties of the annihilator and that $\widehat B$ is an isomorphism. The proofs of the other equalities are analogous.
        \item Indeed, $v \in W^{\perp} \cap {}^{\perp}W$ if, and only if, we have 
    \[
    \omega(v, w) + S(v, w) = 0\,, \qquad \text{and} \qquad -\omega(v, w) + S(v, w) = 0\,,
    \]
    for every $w \in W$. Summing and subtracting both equalities we get $\omega(v, w) = S(v, w) = 0$, for all $w \in W$.
    \item By the first property and Proposition \ref{prop:relation_orthogonals}, we have
    \[
    (W^{\perp} + {}^{\perp}W)^{\perp} = (W^{\perp})^{\perp} \cap ({}^{\perp}W)^{\perp} = J^{-1}(W) \cap W\,,
    \]
    the proof of the other equality is analogous.
     \item It follows from observing that $B|_W$ is non-degenerate if, and only if, $W \cap W^{\perp} = \{0\}$ and using Equation~\eqref{eq:dimensions_orth}.
    \end{enumerate}
\end{proof}

\subsection{Direct sum decomposition of the space}
\label{sec:decomposition}
Let $B$ be a non-degenerate 2-covariant tensor, and $\omega,S$ be, respectively, its skew-symmetric and symmetric parts. 
The case where $E=H\oplus V$, with $H\coloneqq \Ker S$ and $V \coloneqq \Ker \omega$, will be of particular relevance to our study. Recall that if we have this direct sum decomposition condition then, necessarily, the tensor $B$ has to be non-degenerate (by Proposition~\ref{prop:inner_product_non_deg}). 

In this case, we can decompose every $w \in E$ uniquely as $w= w_h + w_v$, with $w_h \in H$ and $w_v \in V$, and so we have well-defined projections $\pr_{H}\colon E\to H\subseteq E$ and $\pr_V\colon E \to V\subseteq E$.
Note that, as $B=\omega +S$ we have $\widehat B\circ \pr_H= \widehat\omega$ and $\widehat B\circ{\pr_{V}} =\widehat S$, so
$$\widehat B^{-1}\circ \widehat\omega =\pr_H\,, \qquad \widehat B^{-1}\circ \widehat S = \pr_V\,,$$
and, similarly, as $\transp{B} = -\omega +S$, we also have $\transp{\widehat B}^{-1}\circ \widehat\omega =-\pr_H$ and $ \transp{\widehat{B}}^{-1}\circ \widehat{S} = \pr_V$.
Hence, using that $\widehat \Lambda= -\transp{\widehat B}^{-1}\circ \widehat\omega \circ \widehat B^{-1}$ we have
\begin{equation}\label{eq:Lambda_decomposition}
    \widehat \Lambda= \pr_H \circ \widehat B^{-1}=-\pr_H \circ \transp{}\widehat B^{-1}\,.
\end{equation}
Also,
$$H^{\perp} = {}^{\perp}{H} = {V} \qquad \text{and}\qquad {V}^{\perp} = {}^{\perp}{V} = {H}\,,$$
with the orthogonal complements taken with respect to $B$, as in the last section.

Although we do not have a direct sum decomposition of the whole space for every non-degenerate tensor $B$ (for example, $\Ker\omega\oplus \Ker S=\{0\}$ whenever both $\omega$ and $S$ are non-degenerate), if we are just studying the bivector $\Lambda$, as we do in the sequel, then we may always reduce to this case. Indeed, we have the following result:
\begin{proposition}
\label{prop:existence_of_equivalent_B}
    Let $B$ be a non-degenerate 2-covariant tensor. Then, there exists a 2-covariant tensor $B'$ such that
    \begin{enumerate}[\rm (i)]
        \item It is non-degenerate.
        \item If $\omega '$ and $S'$ denote the skew-symmetric and symmetric part of $B'$, respectively, we have $\Ker\omega ' \oplus \Ker S' = E$.
        \item The skew-symmetric parts of $B^ {-1}$ and $(B')^ {-1}$ coincide.
    \end{enumerate}
\end{proposition}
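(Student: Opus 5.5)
Let $\Lambda$ denote the skew-symmetric part of $B^{-1}$. The plan is to build $B'$ directly from $\Lambda$ and a complementary symmetric piece, and then check the three conditions using the dual characterisation \eqref{eq:direct_sum_iff_dual}. Since $\Lambda\in\sksym{2}E$ is a skew-symmetric bivector, its image $\Ima\widehat\Lambda\subseteq E$ has even dimension $2r$. Moreover, $K\coloneqq\Ker\Lambda\subseteq E^*$ is precisely the annihilator of $\Ima\widehat\Lambda$.

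\textbf{Step 1: the candidate inverse.} Choose a complement $C$ of $K$ in $E^*$, so that $E^*=K\oplus C$. Choose a symmetric bilinear form $\Sigma'$ on $E^*$ with $\Ker\Sigma'=C$. For instance, pick a basis of $E^*$ adapted to the splitting and set $\Sigma'=\sum\alpha_i^\vee\otimes\alpha_i^\vee$ over the dual vectors of a basis of $K$, or more simply any inner product on $K$ extended by zero on $C$. Define $T\coloneqq\Lambda+\Sigma'\in E\otimes E$. Then $\Ker\Lambda\oplus\Ker\Sigma'=K\oplus C=E^*$.

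\textbf{Step 2: non-degeneracy of $T$.} This is the dual version of Proposition~\ref{prop:inner_product_non_deg}(iv). The proof of that statement is purely algebraic, so applying it to the bilinear form $T$ on the vector space $E^*$ shows that $T$ is non-degenerate.

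\textbf{Step 3: definition and properties of $B'$.} Set $B'\coloneqq T^{-1}$, that is, $\widehat{B'}=\widehat T^{-1}$; this gives (i). Then $(B')^{-1}=T$, whose skew-symmetric part is $\Lambda$, which gives (iii). For (ii), apply \eqref{eq:direct_sum_iff_dual} to $B'$. The skew-symmetric and symmetric parts of $(B')^{-1}$ are $\Lambda$ and $\Sigma'$, whose kernels form a direct sum equal to $E^*$. Hence $E=\Ker\omega'\oplus\Ker S'$.

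The only point needing care is the identification in Step 3. Equation \eqref{eq:direct_sum_iff_dual} was stated for the skew-symmetric and symmetric parts of $B^{-1}$ obtained from $B$, and here it is applied with $B'$ in the role of $B$. This is legitimate because $B'$ is non-degenerate and $(B')^{-1}=T$ by construction, with the same sign conventions as in the paper. Apart from that, the argument is routine linear algebra. The main conceptual step is recognising that one should construct the inverse tensor first and then invert.
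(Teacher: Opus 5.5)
Your proposal is correct and follows essentially the same route as the paper. The paper also picks a complement $F$ of $\Ker\Lambda$ in $E^*$, a symmetric $\Sigma$ with $\Ker\Sigma=F$, gets non-degeneracy of $\Lambda+\Sigma$ from Proposition~\ref{prop:inner_product_non_deg}, sets $B'=(\Lambda+\Sigma)^{-1}$, and obtains (ii) from \eqref{eq:direct_sum_iff_dual}; your remarks about $\Ima\widehat\Lambda$ are unnecessary but harmless.
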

\begin{proof} Let $\Lambda$ denote the skew-symmetric part of ${B}^{-1}$ and choose a complementary subspace $F \subseteq E^\ast$ such that $E^\ast = \Ker\Lambda \oplus F$. 
Choose a symmetric 2-contravariant tensor $\Sigma \in \sym{2}(E)$ such that $\Ker\Sigma = F$. Then, the 2-contravariant tensor $\Lambda + \Sigma$ is non-degenerate, by Proposition~\ref{prop:inner_product_non_deg}, and, by Equation~\eqref{eq:direct_sum_iff_dual}, it is enough to consider 
\[
B' \coloneqq {\left(\Lambda + \Sigma \right)}^{-1}\,.
\] 
\end{proof}
Notice that every 2-covariant tensor $B'$ satisfying the properties of Proposition \ref{prop:existence_of_equivalent_B} can be built by choosing a decomposition $E^\ast = \Ker \Lambda \oplus F$ and a symmetric 2-contravariant tensor $\Sigma \in \sym{2}(E)$ satisfying $F = \Ker \Sigma$ (equivalently, that $\Sigma$ is non-degenerate on $\Ker \Lambda$). Hence, given a non-degenerate 2-covariant tensor $B$, the set of 2-covariant tensors $B'$ satisfying Proposition \ref{prop:existence_of_equivalent_B} is in bijection with
\begin{align*}
 \left\{\text{Non-degenerate } \Sigma \in \sym{2}((\Ker \Lambda)^*)\right\} \times \left\{\text{Direct sum decompositions } E^\ast = \Ker \Lambda \oplus F\right\}\,.   
\end{align*}

\begin{remark}
When the bivector $\Lambda$ is non-degenerate, there is a canonical choice of $B'$, which is just $B'=\Lambda^{-1}$ (i.e.\ taking $F=E^*$ and $\Sigma=0$).
If the symmetric part of $B$ is an inner product $g$, there is also a canonical choice of $F$ and~$\Sigma$: we can set $F \coloneqq (\Ker{\Lambda})^{\perp, g^{-1}}$ and $\Sigma \coloneqq g^{-1}|_{\Ker \Lambda}$.

Let us point out that, in general, $B'$ cannot be constructed by altering only $S$ while keeping~$\omega$ fixed. For example, consider
$$B = \begin{pmatrix} 1 & 1 \\ -1 & 1 \end{pmatrix} = \begin{pmatrix} 0 & 1 \\ -1 & 0 \end{pmatrix} + \begin{pmatrix} 1 & 0 \\ 0 & 1 \end{pmatrix} = \omega + S\,.$$
Since both $\omega$ and $S$ are non-degenerate, achieving $E = \Ker \omega \oplus \Ker S'$ requires $S' = 0$, leading to $B' = \omega$. 
However, computing the skew-symmetric parts of ${B}^{-1}$ and $B'^{-1}$, we obtain 
$$\Lambda = \frac{1}{2}\begin{pmatrix} 0 & -1 \\ 1 & 0 \end{pmatrix}\qquad  \text{and}\qquad \Lambda' = \begin{pmatrix} 0 & -1 \\ 1 & 0 \end{pmatrix}\,,$$
which do not coincide. 

\end{remark}

\begin{remark}
\label{remark:Jacobi_orthogonal}
Let $B$ be a non-degenerate $2$-covariant tensor on $E$ such that $E = \Ker \omega \oplus \Ker S$, where $\omega$ and $S$ denote its skew-symmetric and symmetric parts, respectively. Consider the bivector 
$\Lambda = -\big( \sksym{2} {\widehat B}^{-1}\big) (\omega)$. Given a subspace $W \subseteq E$ we define $W^{\perp, \Lambda} \coloneqq \widehat{\Lambda} \left( W^\circ\right)$. This orthogonal corresponds to the point-wise Poisson and Jacobi orthogonal \cite{deLeonIzquierdoLopez2024}. We can recover it in terms of $B$ as follows. In full generality, we have that 
\[
W^{\perp, \omega} \cap {H} \subseteq W^{\perp, \Lambda}\,.
\]
However, if $W$ is such that $W = (W \cap {H}) \oplus (W \cap {V})$, it is easy to show that the $\Lambda$-orthogonal can be written in terms of the $B$-orthogonal complements as
\[
W^{\perp, \Lambda} = W^{\perp, \omega} \cap {H} = W^{\perp} \cap \leftperp W \cap {H}\,.
\]

\end{remark}

Finally, let us recall that a direct sum decomposition ${H} \oplus{V}$ induces a canonical decomposition on the exterior algebra
$$\sksym{k} E^\ast =\bigoplus_{j = 0}^k \left( \sksym{j}{H}^\ast \otimes \sksym{k - j} {V}^\ast\right)\,.
$$
\begin{definition} An element $\alpha \in \sksym{k} E^\ast$ is said to be of \dfn{type} $(j , k-j)$ if it lies in $\sksym{j}{H}^\ast \otimes \sksym{k-j} {V}^\ast$. 
\end{definition}

In particular, any form $\alpha \in \sksym{k} E^\ast$  decomposes as a sum 
\[
\alpha = \alpha_{(k, 0)} + \cdots + \alpha_{(0,k)}\,,
\]
and each component $\alpha_{(j, k-j)}$ is completely determined by the values of the map
\[
\underbrace{H \times \cdots \times H}_{j \text{ times}} \times \underbrace{V \times\cdots \times V}_{k-j \text{ times}}  \longrightarrow \R\,, \qquad(h_1, \dots, h_j, v_1, \dots, v_{k-j}) \mapsto \alpha(h_1, \dots, h_j, v_1, \dots, v_{k-j})\,.
\]

\section{Brackets induced by a non-degenerate 2-covariant tensor}
\label{Section:Conjugate_bracket}

From this point onwards, we will study $2$-covariant tensor fields on a manifold $M$ which satisfy a non-degeneracy condition point-wise. More particularly, we will investigate the conditions under which a section
\[
B \colon M \rightarrow \T^\ast M \otimes \T^\ast M\,,
\]
such that $B|_x \in \T^\ast _x M \otimes \T^\ast_x M$ is a non-degenerate $2$-covariant tensor for every $x\in M$, defines a Poisson or Jacobi bracket. 
We will denote by 
$\transp{B} \in \Gamma(\T^\ast M\otimes \T^\ast M)$,
$B^{-1} \in \Gamma(\T M \otimes \T M)$,
and
$\widehat{B} \colon \T M \rightarrow \T^\ast M$ the tensor fields and the vector bundle isomorphism, respectively, defined point-wise from~$B$.
The symmetric and skew-symmetric parts of$B$ are defined in the same way.

\subsection{The conjugate Schouten--Nijenhuis bracket}

Let us assume that $B$ is a non-degenerate 2-covariant tensor field on $M$, and that $\omega$ and $S$ are, respectively, its skew-symmetric and symmetric parts.
We will denote by
\[
\sksym{k}\widehat B \colon \sksym{k} \T M \longrightarrow \sksym{k} \T ^\ast M \,,
\]
the linear maps associated to the natural extensions of $\widehat B$ to the exterior algebra. To study the conditions under which the associated bivector field $\Lambda = -\big( \sksym{2} {\widehat B}^{-1}\big) (\omega)$ can be completed to a Jacobi bracket, we need to compute $[\Lambda, \Lambda]$, where $[\cdot, \cdot]$ denotes the Schouten--Nijenhuis bracket~\cite{Mar_97}. Indeed, celebrated results due to Lichnerowicz \cite{Lichnerowicz1977LesVD, Lichnerowicz1978LesVDJacobi} state that a bivector field $\Lambda \in \mathfrak{X}^2(M)$ defines a Poisson structure or can be completed to a Jacobi structure if, and only if, 
\[
[\Lambda, \Lambda] = 0 \qquad \text{or} \qquad [\Lambda, \Lambda] =-2 E \wedge \Lambda\,\text{ and }\,[\Lambda,E]=0\,,
\]
for some $E \in \mathfrak{X}(M)$, respectively\footnote{The sign in front of $E\wedge\Lambda$ depends on the sign convention adopted for the Schouten--Nijenhuis bracket. Throughout this paper we use the convention of~\cite{Mar_97}.}. To compute this, it will be useful to define the following bracket, which is a conjugate of the usual Schouten--Nijenhuis bracket.

\begin{definition} Let $B$ be a non-degenerate 2-covariant tensor field. For any pair of forms $\alpha \in \Omega^a(M)$ and $\beta \in \Omega^b(M)$, the \emph{conjugate Schouten--Nijenhuis bracket associated with $B$}, $\llbracket\cdot, \cdot\rrbracket\colon \Omega^a(M) \times \Omega^b(M) \longrightarrow \Omega^{a+b-1}(M)$, is given by
\[
\llbracket\alpha, \beta\rrbracket \coloneqq \sksym{a+b-1}\transp{\widehat{B}} \left(\left[\sksym{a}\transp{\widehat{B}}^{-1}(\alpha), \sksym{b}\transp{\widehat{B}}^{-1}(\beta)\right] \right)\,.
\]
\end{definition}

With the conjugate bracket, the Poisson and Jacobi conditions of $[\Lambda, \Lambda] = 0$ or $[\Lambda,\Lambda]=-2 E\wedge \Lambda$ and $[\Lambda,E]=0$
for some $E\in \X(M)$, can be equivalently written as $$\llbracket\omega, \omega \rrbracket = 0\qquad \text{or} \qquad \llbracket\omega, \omega\rrbracket = 2 \alpha \wedge \omega\,\text{ and }\,\llbracket \omega,\alpha \rrbracket=0\,,$$
for some $\alpha \in \Omega^1(M)$, and then $E=\transp{\widehat B}^{-1}(\alpha)$.
Hence, the problem is reduced to computing $\llbracket \omega, \omega \rrbracket$. While this computation is generally rather cumbersome, if we have that $\T M = \Ker\omega \oplus \Ker S$ (with $\widehat \omega$ and $\widehat S$ of constant rank), we can give the explicit formula:
    \begin{align*}
        \frac{1}{2}\llbracket\omega,\omega\rrbracket(X,Y,Z) =
        \d \omega ( X_\H, Y_\H, Z_\H) + S([X_\H,Y_\H],Z_\V) + S([Y_\H,Z_\H],X_\V) + S([Z_\H,X_\H],Y_\V)\,,
    \end{align*}
    for any $X,Y,Z \in \X(M)$.
Here, $\H = \Ker S$, $\V= \Ker \omega$, and $X_\H\coloneqq \pr_\H (X)$, $X_\V \coloneqq \pr_\V(X)$ denote the projections of the vector fields to the horizontal and vertical distributions, respectively.
This will be proved in the next section.

\subsection{Properties of the conjugate Schouten--Nijenhuis bracket}

Here we give some elementary properties of the conjugate Schouten--Nijenhuis bracket $\llbracket\cdot, \cdot \rrbracket$ associated with a non-degenerate 2-covariant tensor $B$.
\begin{proposition}
\label{prop:properties_bracket}
The bracket $\llbracket\cdot, \cdot \rrbracket$ satisfies the following properties:
\begin{enumerate}[\rm(i)]
    \item It is graded skew-symmetric: For every $\alpha \in \Omega^a(M)$ and $\beta \in \Omega^b(M)$ we have $\llbracket\alpha, \beta \rrbracket = -(-1)^{(a-1)(b-1)} \llbracket\beta, \alpha \rrbracket$.
    \item It satisfies the graded Jacobi identity: For every $\alpha \in \Omega^a(M)$, $\beta \in \Omega^b(M)$ and $\gamma \in \Omega^c(M)$, we have
    \[
    (-1)^{(a-1)(c-1)} \llbracket\alpha ,\llbracket\beta, \gamma \rrbracket \, \rrbracket + \text{cyclic permutations} = 0\,. 
    \]
    \item It satisfies a graded Leibniz identity: For every $\alpha \in \Omega^a(M)$, $\beta \in \Omega^b(M)$ and $\gamma \in \Omega^c(M)$, we have
    \[
    \llbracket\alpha, \beta \wedge \gamma\rrbracket = \llbracket\alpha, \beta \rrbracket \wedge \gamma + (-1)^{(a-1)b} \beta \wedge \llbracket\alpha, \gamma \rrbracket\,.
\]
    \item If $f \in \Cinfty(M)$, $\alpha \in \Omega^a(M)$ and $\beta \in \Omega^b(M)$ and we define $X_f \coloneqq {\widehat{B}}^{-1}(\d f)$, then
    \[
    \inn{X_f} \llbracket\alpha, \beta \rrbracket = \llbracket \inn{X_f}\alpha,\beta \rrbracket + (-1)^{a-1} \llbracket\alpha, \inn{X_f}\beta \rrbracket\,.
    \]
\end{enumerate}
\end{proposition}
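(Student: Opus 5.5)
The plan is to transport everything to multivector fields through the isomorphism $\Phi\coloneqq\sksym{\bullet}\transp{\widehat B}$, which is a degree-preserving algebra isomorphism $\mathfrak{X}^\bullet(M)\to\Omega^\bullet(M)$ with respect to the wedge product. By definition,
\[
\llbracket\alpha,\beta\rrbracket=\Phi\big([\Phi^{-1}\alpha,\Phi^{-1}\beta]\big),
\]
so each property of $\llbracket\cdot,\cdot\rrbracket$ should come from the corresponding property of the Schouten--Nijenhuis bracket. Throughout I use the conventions of \cite{Mar_97}.

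\textbf{(i) and (ii).} Since $\Phi^{-1}$ preserves degrees, $P=\Phi^{-1}\alpha$ has degree $a$ and $Q=\Phi^{-1}\beta$ has degree $b$. Graded skew-symmetry $[P,Q]=-(-1)^{(a-1)(b-1)}[Q,P]$ then transfers directly by linearity of $\Phi$. For the Jacobi identity, note that $\Phi^{-1}\llbracket\beta,\gamma\rrbracket=[\Phi^{-1}\beta,\Phi^{-1}\gamma]$. Hence the left-hand side of (ii) is exactly $\Phi$ applied to the graded Jacobi expression for the Schouten--Nijenhuis bracket, which vanishes.

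\textbf{(iii).} Here I use that $\sksym{\bullet}$ of a bundle map is multiplicative: $\Phi^{-1}(\beta\wedge\gamma)=\Phi^{-1}\beta\wedge\Phi^{-1}\gamma$, and likewise for $\Phi$. The graded Leibniz rule for Schouten--Nijenhuis is
\[
[P,Q\wedge R]=[P,Q]\wedge R+(-1)^{(a-1)b}Q\wedge[P,R].
\]
Applying $\Phi$ to it gives (iii). The only point to check is that the sign convention in \cite{Mar_97} matches the stated sign.

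\textbf{(iv).} This is the only step requiring care. First I relate interior products on the two sides. For a vector field $X$, a $k$-vector $P$ and a $k$-form $\alpha$, I compare $\Phi^{-1}(\inn{X}\alpha)$ with a contraction of $\Phi^{-1}\alpha$. Contraction of forms by a vector corresponds, under $\Phi^{-1}$, to contraction of multivectors by the covector $\transp{\widehat B}^{-1}$-dual to $X$. Concretely, I would check on decomposable elements that
\[
\inn{X}\,\Phi(P)=\Phi\big(\inn{\theta}P\big),\qquad \theta(Y)\coloneqq \transp{\widehat B}(Y)(X)=B(X,Y).
\]
This gives $\theta=\widehat B(X)$. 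For $X=X_f=\widehat B^{-1}(\d f)$ this yields $\theta=\d f$, which explains why $X_f$ is defined with $\widehat B^{-1}$ rather than $\transp{\widehat B}^{-1}$.

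So (iv) reduces to the identity
\[
\inn{\d f}[P,Q]=[\inn{\d f}P,Q]+(-1)^{a-1}[P,\inn{\d f}Q].
\]
I expect to obtain this by noting that $\inn{\d f}P=\pm[P,f]$ (equivalently $\pm[f,P]$, depending on convention), so it becomes the graded Jacobi identity for the triple $P,Q,f$, using that $f$ has degree $0$. The main obstacle is tracking signs: the left versus right contraction convention, and the sign relating $\inn{\d f}P$ to $[P,f]$ in \cite{Mar_97}. I would fix these by testing on $P=X\wedge Y$ and $f$ a coordinate function, then adjust the exponent so that it matches $(-1)^{a-1}$.
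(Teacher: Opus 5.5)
Your proposal is correct and is essentially the paper's proof: (i)--(iii) are inherited from the Schouten--Nijenhuis bracket through the algebra isomorphism $\sksym{\bullet}\transp{\widehat B}$, and (iv) rests on the same transport identity $\inn{\d f}\big(\sksym{a}\transp{\widehat B}^{-1}(\alpha)\big)=\sksym{a-1}\transp{\widehat B}^{-1}(\inn{X_f}\alpha)$, combined with the derivation property of $\inn{\d f}$ with respect to the Schouten--Nijenhuis bracket. The only difference is that the paper quotes this derivation property from Marle~\cite{Mar_97} (Prop.~4.3, valid for any closed 1-form), whereas you derive the exact case from the graded Jacobi identity; this works cleanly if you write $\inn{\d f}=-[f,\cdot\,]$, whose sign does not depend on the degree (unlike the relation with $[P,f]$), so the factor $(-1)^{(0-1)(a-1)}=(-1)^{a-1}$ appears directly and no sign adjustment is needed.
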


\begin{proof} 
The first three properties follow from the properties of the Schouten--Nijenhuis bracket.
Let us prove~{\rm (iv)}, as it does not immediately follow from algebraic considerations. Recall (see~\cite[Prop.\ 4.3]{Mar_97}),
that for every closed $1$-form $\varepsilon \in \Omega^1(M)$ and multivectors $U \in \mathfrak{X}^{p}(M)$ and $V \in \mathfrak{X}^{q}(M)$ on $M$, we have 
\begin{equation}\label{eq:multivectors_formula_remark}
\inn{\varepsilon}[U, V] =[\inn{\varepsilon}U,  V]+ (-1)^{(p-1)}[ U,\inn{\varepsilon} V]  \,.    
\end{equation}
In particular, for arbitrary $\alpha \in \Omega^a(M)$, $\beta \in \Omega^b(M)$ and $U \in \mathfrak{X}^{a+b - 2}(M)$ we have 
\begin{align*}
    \langle \inn{X_f}\llbracket\alpha, \beta \rrbracket, U \rangle =  \left\langle \left[\sksym{a}\transp{\widehat{B}}^{-1}(\alpha), \sksym{b}\transp{\widehat{B}}^{-1}(\beta) \right], \d f \wedge  \sksym{a+ b - 2} \widehat B(U)\right\rangle\,.
\end{align*}
The equality follows from~\eqref{eq:multivectors_formula_remark} and the fact that $\inn{\d f}\left( \sksym{a} \transp{\widehat B}^{-1}\right) (\alpha) = \sksym{a-1} \transp{\widehat B}^{-1} (\inn{X_f}\alpha)$.
\end{proof}

\begin{remark} For the sake of completeness, we would like to mention that in \cite[Prop.\ 4.3]{Mar_97} the author writes that, for any multivector fields $P\in \X^p(M)$, $Q\in \X^q(M)$ and closed 1-form $\varepsilon\in \Omega^1(M)$, one has
\[
j_\varepsilon [P, Q] = [P, j_\varepsilon Q] + (-1)^{q-1}[j_\varepsilon P, Q]\,,
\]
where $j_\varepsilon$ denotes the contraction on the right, rather than $\inn{\varepsilon}$ which denotes contraction on the left. Notice that both are related by $j_\varepsilon P = (-1)^{p-1} \inn{\varepsilon} P$. Hence the above formula can be written as 
\[
(-1)^{p+q} \inn{\varepsilon}[P, Q] = (-1)^{(q-1)} [P, \inn{\varepsilon} Q] + (-1)^{(q-1)+(p-1)} [\inn{\varepsilon} P, Q]\,,
\]
which recovers Equation~\eqref{eq:multivectors_formula_remark}.
\end{remark}

\begin{remark}
Note that by Proposition~\ref{prop:properties_bracket}, if $X_f = {\widehat B}^{-1}(\d f) \in \X(M)$ and $\omega \in \Omega^2(M)$, then
\begin{equation}\label{eq:bracket_omega_Xf}
\inn{X_f} \llbracket\omega, \omega\rrbracket = \llbracket\iota_{X_f}\omega, \omega\rrbracket - \llbracket\omega, \inn{X_f}\omega\rrbracket = -2 \llbracket\omega, \inn{X_f}\omega\rrbracket\,.
\end{equation}

\end{remark}

Let $E$ be a vector field and $T$ a tensor field. We denote by $\Lie_E \widehat T \coloneqq \widehat {\Lie_E T}$ the linear map induced by the contraction of $\Lie_E T$ on its first argument.

It will be useful to derive another expression for the bracket above, when one of the arguments is a 1-form. 
First, we need to observe the following:
\begin{lemma}
\label{lemma:Lie_derivative_of_inverse}
Let $B$ be a non-degenerate 2-covariant tensor and $B^{-1}$ the 2-contravariant tensor associated with its inverse. Then, for any $E\in \X(M)$,
$$ \Lie_E  \sksym{k} \widehat{B}^{-1} = - \sksym{k}\widehat{B}^{-1} \circ\left( \Lie_E \sksym{k} \widehat{B}\right) \circ \sksym{k}\widehat{B}^{-1} \,.$$
The same identity holds also for the transpose maps:
\[
\left(\Lie_{E} \sksym{k}\transp{\widehat{B}}^{-1}\right) = - \sksym{k}\transp{\widehat{B}}^{-1} \circ \left({\Lie_E \sksym{k} \transp{\widehat B}}\right)\circ\sksym{k}\transp{\widehat B}^{-1} \,.
\]

\end{lemma}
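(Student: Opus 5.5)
The plan is to derive the identity from the tautological relation $\sksym{k}\widehat{B}^{-1}\circ \sksym{k}\widehat{B} = \Id$ on $\sksym{k}\T M$, by differentiating it along $E$. Throughout, $\Lie_E$ of a vector bundle morphism $A\colon \sksym{k}\T M \to \sksym{k}\T^\ast M$ is understood as the Lie derivative of the corresponding tensor field. Equivalently, it is the map $\sigma \mapsto \Lie_E(A(\sigma)) - A(\Lie_E \sigma)$ on sections. The first thing to check is that this matches the paper's convention $\Lie_E \widehat T = \widehat{\Lie_E T}$. This follows because $\Lie_E$ is a derivation commuting with contractions: $\Lie_E(\inn{X}T) = \inn{\Lie_E X}T + \inn{X}\Lie_E T$. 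The same argument applies to the extensions to $\sksym{k}$, since $\sksym{k}\widehat B$ is again contraction with a tensor field built from $B$.

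With that in place, the key step is a Leibniz rule for compositions: $\Lie_E(A\circ C) = (\Lie_E A)\circ C + A\circ (\Lie_E C)$. It follows immediately from the description on sections:
\[
\Lie_E(A C\sigma) - AC\Lie_E\sigma = \bigl(\Lie_E(A C\sigma) - A\Lie_E(C\sigma)\bigr) + A\bigl(\Lie_E(C\sigma) - C\Lie_E\sigma\bigr).
\]
Apply this to $A = \sksym{k}\widehat B^{-1}$ and $C = \sksym{k}\widehat B$. Since $\Lie_E \Id = 0$, this gives $(\Lie_E \sksym{k}\widehat B^{-1})\circ \sksym{k}\widehat B = -\sksym{k}\widehat B^{-1}\circ \Lie_E\sksym{k}\widehat B$. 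Composing on the right with $\sksym{k}\widehat B^{-1}$ yields the claimed formula.

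For the transposed version, the same argument runs with $\transp{B}$ in place of $B$. This works because $\transp{\widehat B} = \widehat{\transp B}$, $\transp{B}$ is again non-degenerate, and $\sksym{k}\transp{\widehat B}^{-1} = (\sksym{k}\transp{\widehat B})^{-1}$.

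The only real subtlety is the bookkeeping that $\Lie_E \sksym{k}\widehat B$, in the sense of the paper, really coincides with the operator $\sigma\mapsto \Lie_E(\sksym{k}\widehat B\,\sigma) - \sksym{k}\widehat B(\Lie_E\sigma)$. I would verify this on decomposable $\sigma = X_1\wedge\cdots\wedge X_k$. There, $\sksym{k}\widehat B\,\sigma = \widehat B X_1\wedge\cdots\wedge\widehat B X_k$, and $\Lie_E$ is a derivation of the wedge product. Both sides then reduce to $\sum_i \widehat BX_1\wedge\cdots\wedge(\Lie_E\widehat B)X_i\wedge\cdots\wedge \widehat BX_k$. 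Note that $\Lie_E\sksym{k}\widehat B$ is thus the derivation extension of $\Lie_E \widehat B$, not $\sksym{k}(\Lie_E\widehat B)$, and the proof should interpret the notation that way.
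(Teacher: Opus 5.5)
Your proposal is correct and follows essentially the same route as the paper: differentiate $\sksym{k}\widehat B^{-1}\circ\sksym{k}\widehat B=\Id$ along $E$, using that the Lie derivative commutes with contractions and that $\Lie_E\Id=0$, then compose on the right with $\sksym{k}\widehat B^{-1}$, and handle the transpose case the same way. Your extra check that $\Lie_E\sksym{k}\widehat B$ acts by the Leibniz-type extension of $\Lie_E\widehat B$ (and not as $\sksym{k}(\Lie_E\widehat B)$) is accurate, and it agrees with how the paper uses this lemma in the subsequent computation of $\llbracket\alpha,\beta\rrbracket$.
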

\begin{proof}
Indeed, from the equality $\sksym{k}\widehat{B}^{-1} \circ \sksym{k}\widehat{B} = \mathrm{Id}_{\sksym{k}\T M}$, the fact that the Lie derivative commutes with index contraction, and $\Lie_E \mathrm{Id}_{\wedge ^k\T M} = 0$, for all $E \in \mathfrak{X}(M)$, we obtain 
\[
0 = \left(\Lie_{E} \sksym{k}\widehat{B}^{-1}\right) \circ\sksym{k}\widehat B + \sksym{k}\widehat{B}^{-1} \circ \left({\Lie_E \sksym{k} \widehat B}\right)\,,
\]
from which the desired equality follows. The identity for the transpose maps follows from the exact same argument.
\end{proof}
With this, we can prove the following formula:
\begin{proposition}
\label{prop:dual_bracket_computation}
Let $\alpha\in \Omega^p(M)$ and $\beta \in \Omega^1(M)$. Then
\begin{equation}
  \llbracket\alpha, \beta\rrbracket =  \left({\Lie_{X_\beta} \sksym{p} \transp{\widehat B}}\right)\left(\sksym{p}\transp{\widehat B}^{-1} (\alpha)\right) - \Lie_{X_\beta} \alpha\,.  
\end{equation}
where $X_\beta = \transp{\widehat B}^{-1}(\beta)$.

\end{proposition}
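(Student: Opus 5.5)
The plan is to unwind the definition of the conjugate bracket and reduce everything to the Schouten--Nijenhuis bracket of a $p$-vector field with a vector field, which is a Lie derivative. Set $P \coloneqq \sksym{p}\transp{\widehat B}^{-1}(\alpha) \in \X^p(M)$ and $X_\beta = \transp{\widehat B}^{-1}(\beta)$; note $\sksym{1}\transp{\widehat B}^{-1} = \transp{\widehat B}^{-1}$. By definition,
\[
\llbracket \alpha, \beta\rrbracket = \sksym{p}\transp{\widehat B}\left([P, X_\beta]\right).
\]
For a vector field $X$ and multivector $P$, the Schouten--Nijenhuis bracket satisfies $[X,P] = \Lie_X P$. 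With the graded skew-symmetry of the convention of \cite{Mar_97}, $[P,X] = -(-1)^{(p-1)\cdot 0}[X,P] = -\Lie_X P$. So the first step gives
\[
\llbracket \alpha, \beta\rrbracket = -\sksym{p}\transp{\widehat B}\left(\Lie_{X_\beta} P\right).
\]
I will double-check the sign against the stated formula: the $-\Lie_{X_\beta}\alpha$ term there confirms $[P,X]=-\Lie_X P$.

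Next I compute $\Lie_{X_\beta}P = \Lie_{X_\beta}\big(\sksym{p}\transp{\widehat B}^{-1}(\alpha)\big)$. The Lie derivative commutes with contractions, so Leibniz gives
\[
\Lie_{X_\beta} P = \left(\Lie_{X_\beta}\sksym{p}\transp{\widehat B}^{-1}\right)(\alpha) + \sksym{p}\transp{\widehat B}^{-1}\left(\Lie_{X_\beta}\alpha\right).
\]
I then invoke Lemma~\ref{lemma:Lie_derivative_of_inverse}, in its transpose version, to rewrite the first term as
\[
-\sksym{p}\transp{\widehat B}^{-1}\circ\left(\Lie_{X_\beta}\sksym{p}\transp{\widehat B}\right)\circ\sksym{p}\transp{\widehat B}^{-1}(\alpha).
\]
Finally I apply $-\sksym{p}\transp{\widehat B}$. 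The outer inverse cancels, and we obtain
\[
\left(\Lie_{X_\beta}\sksym{p}\transp{\widehat B}\right)\left(\sksym{p}\transp{\widehat B}^{-1}(\alpha)\right) - \Lie_{X_\beta}\alpha,
\]
as claimed.

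The main obstacle is bookkeeping rather than substance. First, the sign of $[P,X]$ in the chosen convention must be right. Second, the meaning of $\Lie_{X_\beta}\sksym{p}\transp{\widehat B}$ must be justified as the Lie derivative of the bundle map, viewed as a tensor field, so that the Leibniz rule applies. For this I would observe that $\sksym{p}\transp{\widehat B}$ is a section of $\mathrm{Hom}(\sksym{p}\T M, \sksym{p}\T^\ast M)$, built tensorially from $B$, and that $\Lie_X$ is a derivation commuting with contractions. This is exactly the fact already used in the proof of Lemma~\ref{lemma:Lie_derivative_of_inverse}.
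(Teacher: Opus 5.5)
Your proposal is correct and follows essentially the same route as the paper's proof. You write $[P,X_\beta]=-\Lie_{X_\beta}P$, expand $\Lie_{X_\beta}\bigl(\sksym{p}\transp{\widehat B}^{-1}(\alpha)\bigr)$ using the fact that Lie derivatives commute with contractions, apply the transpose version of Lemma~\ref{lemma:Lie_derivative_of_inverse}, and finish by applying $-\sksym{p}\transp{\widehat B}$; your derivation of the sign of $[P,X_\beta]$ from graded skew-symmetry in the convention of \cite{Mar_97} is exactly the step the paper uses implicitly.
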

\begin{proof} By definition, $\llbracket\alpha, \beta\rrbracket = \sksym{p}\transp{\widehat{B}} \left(\left[ \sksym{p}\transp{\widehat B}^{-1 }(\alpha), X_\beta \right]\right)$. We may write this as 
\[
\llbracket\alpha, \beta\rrbracket = - \sksym{p}\transp{\widehat{B}} \left( \Lie_{X_{\beta}} \left(\sksym{p}\transp{\widehat B}^{-1 }(\alpha)\right)\right)\,.
\]
Since the Lie derivative commutes with index contraction, we have that 
$$ \Lie_{X_{\beta}} \left(\sksym{p}\transp{\widehat B}^{-1 }(\alpha)\right) = \left(\Lie_{X_{\beta}} \sksym{p}\transp{\widehat B}^{-1 }\right)(\alpha) + \sksym{p}\transp{\widehat B}^{-1}(\Lie_{X_\beta} \alpha)\,, $$
and so, after applying Lemma \ref{lemma:Lie_derivative_of_inverse}, we get 
\[
\Lie_{X_{\beta}} \left(\sksym{p}\transp{\widehat B}^{-1}(\alpha)\right) = - \left(\sksym{p}\transp{\widehat{B}}^{-1} \circ \left({\Lie_{X_\beta} \sksym{p} \transp{\widehat B}}\right)\circ\sksym{p}\transp{\widehat B}^{-1} \right)(\alpha) + \sksym{p}\transp{\widehat B}^{-1}(\Lie_{X_\beta} \alpha)\,.
\]
Applying $-\sksym{p}\transp{\widehat{B}}$ to the previous equation we obtain 
\begin{equation}
    \label{eq:bracket_1}
  \llbracket\alpha, \beta\rrbracket =  \left({\Lie_{X_\beta} \sksym{p} \transp{\widehat B}}\right)\left(\sksym{p}\transp{\widehat B}^{-1} (\alpha)\right) - \Lie_{X_\beta} \alpha\,.  
\end{equation}
\end{proof}

By the Leibniz rule, the map
$$\left({\Lie_{X_\beta} \sksym{p} \transp{\widehat B}}\right)\circ\left(\sksym{p}\transp{\widehat B}^{-1}\right)\colon \sksym{p} \T^* M \longto \sksym{p}\T^* M\,,$$
is exactly the derivation of degree 0 on the exterior algebra, induced by the endomorphism $\widehat K = \widehat{B}^{-1} \circ \Lie_{X_\beta} \widehat{B} \in \mathrm{End}(\T M)$, which we denote by $D_{\widehat K}$ (see~\cite[p.~110]{gre1978}).
That is, for any $\alpha_1,\dots, \alpha_p \in \T ^*M$ we have
\begin{align}
    \left({\Lie_{X_\beta} \sksym{p} \transp{\widehat B}}\right)\circ\left(\sksym{p}\transp{\widehat B}^{-1}\right)(\alpha_1\wedge\dots\wedge\alpha_p) 
    &= 
    \left({\Lie_{X_\beta} \sksym{p} \transp{\widehat B}}\right)\left(\transp{\widehat B}^{-1}(\alpha_1)\wedge \dots \wedge \transp{\widehat B}^{-1}(\alpha_p)\right)\\
    &=
    \sum_{i=1}^p \alpha_1\wedge\dots\wedge \left(\Lie_{X_\beta}\transp{\widehat B}\right)\left(\transp{\widehat B}^{-1}(\alpha_i)\right)\wedge \dots\wedge \alpha_p\,.
\end{align}
Thus, we can write
$$\llbracket\alpha, \beta\rrbracket = D_{\widehat K}{\alpha} - \Lie_{X_{\beta}}\alpha\,,$$
with $\widehat K=\widehat B^{-1}\circ \Lie_{X_\beta}\widehat B$.

\begin{corollary}\label{cor:cool_formula}
    Let $\omega$ and $S$ denote, respectively, the skew-symmetric and symmetric parts of a non-degenerate 2-covariant tensor field $B$. Then, for any $\beta\in \Omega^1(M)$, we have
    $$\llbracket\omega, \beta\rrbracket = 2(\widehat \omega\circ \widehat B^{-1} \circ \Lie_{X_{\beta}} \widehat B)^{\mathrm{skew}}- \Lie_{X_\beta}\omega =- 2(\widehat S\circ \widehat B^{-1} \circ \Lie_{X_{\beta}}\widehat B)^{\mathrm{skew}}+
    \Lie_{X_\beta}\omega  \,, $$
where $\widehat T^{\mathrm{skew}}$ denotes the skew-symmetric part of the tensor $T$ associated to the linear map $\widehat T$.
\end{corollary}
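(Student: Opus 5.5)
We start from the formula $\llbracket\alpha,\beta\rrbracket = D_{\widehat K}\alpha - \Lie_{X_\beta}\alpha$ obtained just above from Proposition~\ref{prop:dual_bracket_computation}, with $\widehat K = \widehat B^{-1}\circ \Lie_{X_\beta}\widehat B$, and specialise it to $\alpha=\omega$, $p=2$. For a degree-$0$ derivation induced by an endomorphism acting on a $2$-form we have $(D_{\widehat K}\omega)(u,v) = \omega(\widehat K u, v) + \omega(u,\widehat K v)$. This follows from the displayed formula for decomposable forms, with the dual action understood as $\transp{\widehat K}$ acting on covectors, and then extends by linearity. We rewrite this as $2\,\bigl(\widehat\omega\circ\widehat K\bigr)^{\mathrm{skew}}$. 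Indeed, $(\widehat\omega\circ \widehat K)$ corresponds to the bilinear form $(u,v)\mapsto \omega(\widehat K u, v)$, and its skew-symmetric part is $\tfrac12\bigl(\omega(\widehat Ku,v)-\omega(\widehat Kv,u)\bigr) = \tfrac12\bigl(\omega(\widehat Ku,v)+\omega(u,\widehat Kv)\bigr)$. This gives the first equality. The one point to check here is the sign and order convention: that the map $\bigl(\Lie_{X_\beta}\transp{\widehat B}\bigr)\circ\transp{\widehat B}^{-1}$ on $1$-forms really is $\transp{\widehat K}$, the transpose of $\widehat B^{-1}\circ\Lie_{X_\beta}\widehat B$. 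This follows by transposing, since $\transp{(\Lie_{X}\widehat B)} = \Lie_X\transp{\widehat B}$ because the Lie derivative commutes with the involution $\mathsf t$.

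For the second equality we use $\widehat\omega = \widehat B - \widehat S$, so that
\[
\widehat\omega\circ\widehat B^{-1}\circ\Lie_{X_\beta}\widehat B = \Lie_{X_\beta}\widehat B - \widehat S\circ\widehat B^{-1}\circ\Lie_{X_\beta}\widehat B\,.
\]
Taking skew-symmetric parts, and using that the Lie derivative preserves the symmetric/skew-symmetric splitting, we get $(\Lie_{X_\beta}\widehat B)^{\mathrm{skew}} = \Lie_{X_\beta}\omega$. Hence
\[
2(\widehat\omega\circ\widehat B^{-1}\circ\Lie_{X_\beta}\widehat B)^{\mathrm{skew}} - \Lie_{X_\beta}\omega = 2\Lie_{X_\beta}\omega - 2(\widehat S\circ\widehat B^{-1}\circ\Lie_{X_\beta}\widehat B)^{\mathrm{skew}} - \Lie_{X_\beta}\omega\,,
\]
which is the stated right-hand side.

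The only real obstacle is bookkeeping: identifying the dual derivation precisely with $2(\widehat\omega\circ\widehat K)^{\mathrm{skew}}$, with the correct convention for which slot $\widehat T$ contracts. To handle it, I would check the identity $(D_{\widehat K}\omega)(u,v)=\omega(\widehat Ku,v)+\omega(u,\widehat Kv)$ directly on $\omega=\alpha_1\wedge\alpha_2$, using $\langle\transp{\widehat K}\alpha_i,u\rangle=\langle\alpha_i,\widehat Ku\rangle$.
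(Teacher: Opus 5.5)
Your proposal is correct and follows essentially the same argument as the paper. The paper also verifies $(D_{\widehat K}\omega)(X,Y)=\omega(\widehat K X,Y)+\omega(X,\widehat K Y)=2(\widehat\omega\circ\widehat K)^{\mathrm{skew}}(X,Y)$, and then obtains the second equality from $\widehat S\circ\widehat K=\Lie_{X_\beta}\widehat B-\widehat\omega\circ\widehat K$ by taking skew-symmetric parts. Your check that $(\Lie_{X_\beta}\transp{\widehat B})\circ\transp{\widehat B}^{-1}$ is the transpose of $\widehat K$ makes explicit a convention that the paper leaves implicit.
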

\begin{proof}
For the first equality it is enough to show that if $K$ is any endomorphism and $\omega$ is a 2-form then
    $$D_{\widehat K} \omega = 2(\widehat\omega \circ \widehat K)^{\mathrm{skew}}\,.$$
Indeed, let $X, Y \in \mathfrak{X}(M)$. Since $D_{\widehat K}$ acts as a derivation of degree 0 on the exterior algebra, its action on a 2-form $\omega$ is given by
$
(D_{\widehat {K}} \omega)(X, Y) = \omega(\widehat K(X), Y) + \omega(X, \widehat K(Y))
$.
On the other hand, we have $(\widehat \omega \circ \widehat  K)(X, Y) = \omega(\widehat K(X), Y)$. Hence,
\begin{align*}
2(\widehat \omega \circ \widehat K)^{\mathrm{skew}}(X, Y) &= (\widehat \omega \circ \widehat K)(X, Y) - (\widehat \omega \circ \widehat K)(Y, X) \\
&= \omega(\widehat K(X), Y) - \omega(\widehat K(Y), X)= \omega(\widehat K(X), Y)+\omega(X, \widehat K(Y))\,.
\end{align*}
as we wanted to see.
For the other equality we use that
    $$\widehat{S} \circ \widehat  K = (\widehat{B} - \widehat{\omega}) \circ \widehat  K = \widehat{B}\circ \widehat K - \widehat{\omega} \circ \widehat K =\Lie_{X_\beta} \widehat{B} - \widehat{\omega} \circ\widehat  K\,,$$
    which, taking the skew-symmetric component, yields 
    $$ (\widehat{S} \circ \widehat K)^{\mathrm{skew}} = \Lie_{X_\beta} \widehat{\omega} - (\widehat{\omega} \circ \widehat K)^{\mathrm{skew}} \,.$$
    Substituting this relation into the first equality concludes the proof.
\end{proof}

\section{When does a 2-covariant tensor field induce a Poisson or Jacobi structure}
\label{Section:One_tensor}

\subsection{A formula for the Schouten--Nijenhuis bracket 
under direct sum decomposition}

Let $B$ be a non-degenerate 2-covariant tensor such that $\omega$ and $S$ denote, respectively, its skew-symmetric and symmetric parts.
Additionally, let us assume that $\T M = \H \oplus \V$, where 
$$
\H = \Ker S,
\qquad 
\V = \Ker \omega
$$
are vector subbundles that will be called
\emph{horizontal} and \emph{vertical}
just to facilitate the wording.
Under this hypothesis
the ranks of the skew-symmetric and symmetric parts of~$B$ are constant. 
Recall that if a 2-covariant tensor satisfies this direct sum condition, then it is necessarily non-degenerate (see Proposition~\ref{prop:inner_product_non_deg}). 

Let $\pr_\H$ and $\pr_\V$ denote the corresponding projections onto the horizontal and vertical subbundles; their natural extension to vector fields will be written using the same notation. 
Given a vector field $X\in \X(M)$ we will denote $X_\H = \pr_\H (X)$ and $X_\V = \pr_\V (X)$. 
In this section we will use the results explained in Section~\ref{sec:decomposition}.

\begin{lemma}
    Let $B$ be a non-degenerate 2-covariant tensor and denote by $\omega$ and $S$, respectively, its skew-symmetric and symmetric parts. If $\T M = \H \oplus \V$, then for every $X, Y, Z \in \X (M)$
    $$ \big\langle (\widehat \omega \circ \widehat {B}^{-1} \circ \Lie_{Z}\widehat  {B})(X), Y\big\rangle = (\Lie_{Z} B)(X, Y_\mathcal{H})\,.$$
    Similarly,
    $$\big\langle (\widehat S \circ\widehat  {B}^{-1} \circ \Lie_{Z} \widehat{B})(X), Y\big\rangle = (\Lie_{Z} B)(X, Y_\mathcal{V})\,.$$
\end{lemma}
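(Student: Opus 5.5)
The plan is to reduce everything to the pointwise identities from Section~\ref{sec:decomposition}, namely $\widehat B^{-1}\circ\widehat\omega=\pr_\H$ and $\widehat B^{-1}\circ\widehat S=\pr_\V$. We then transpose them to obtain the analogous identities for $\widehat\omega\circ\widehat B^{-1}$ and $\widehat S\circ\widehat B^{-1}$ acting on covectors. All the Lie derivative structure is carried passively by the 1-form $\beta:=(\Lie_Z\widehat B)(X)=(\Lie_Z B)(X,\cdot)$. The claim therefore becomes a purely algebraic statement: for any 1-form $\beta$,
\[
\langle \widehat\omega(\widehat B^{-1}\beta),Y\rangle=\beta(Y_\H)\,,\qquad \langle \widehat S(\widehat B^{-1}\beta),Y\rangle=\beta(Y_\V)\,.
\]

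First I fix the convention that $\langle\widehat T(u),Y\rangle=T(u,Y)$, consistent with $\widehat B(v)=B(v,\cdot)$. Then, with $u=\widehat B^{-1}\beta$, the left-hand side of the first identity is $\omega(u,Y)$.

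Next I decompose $Y=Y_\H+Y_\V$. Since $\V=\Ker\omega$, we get $\omega(u,Y)=\omega(u,Y_\H)$.

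Now I use $\H=\Ker S$, which gives $S(u,Y_\H)=S(Y_\H,u)=0$. Hence $\omega(u,Y_\H)=B(u,Y_\H)=\langle\widehat B(u),Y_\H\rangle=\beta(Y_\H)$, and this proves the first formula.

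For the second formula I argue symmetrically. Because $\H=\Ker S$, we have $S(u,Y)=S(u,Y_\V)$. Because $\V=\Ker\omega$, the skew-symmetry of $\omega$ gives $\omega(u,Y_\V)=-\omega(Y_\V,u)=0$. So $S(u,Y_\V)=B(u,Y_\V)=\beta(Y_\V)$.

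Finally I substitute $\beta=(\Lie_Z\widehat B)(X)$ to get $(\Lie_ZB)(X,Y_\H)$ and $(\Lie_ZB)(X,Y_\V)$, respectively. This uses the convention $\Lie_Z\widehat B=\widehat{\Lie_ZB}$ fixed just before Lemma~\ref{lemma:Lie_derivative_of_inverse}.

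The only delicate point is the bookkeeping of which slot is contracted. We need that $\widehat\omega$ contracts the first argument, so that the kernel property of $\V$ and $\H$ applies in the second slot via skew-symmetry or symmetry, and that $\Lie_Z\widehat B$ is contraction of $\Lie_ZB$ in its first argument. Once these conventions are pinned down, the proof is a two-line computation. No derivative of the projections enters, because $Y_\H$ is only evaluated pointwise.
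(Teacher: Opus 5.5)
Your proof is correct and essentially the same as the paper's. The paper transposes the left-hand side to $-\langle (\Lie_Z\widehat B)(X), \transp{\widehat B}^{-1}\circ\widehat\omega(Y)\rangle$ and invokes $\transp{\widehat B}^{-1}\circ\widehat\omega=-\pr_\H$; that identity is exactly the pointwise relation $\omega(u,Y)=B(u,Y_\H)$, which you verify directly from $\V=\Ker\omega$, $\H=\Ker S$ and the (skew-)symmetry of $\omega$ and $S$, and the second formula follows in both by exchanging the roles of $\H$ and $\V$.
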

\begin{proof}
    For any $X,Y \in \X (M)$ we have
    $$\langle \widehat\omega \circ \widehat{B}^{-1} \circ \Lie_{Z} \widehat{B}(X), Y\rangle=
    -\langle \Lie_{Z} \widehat{B}(X), \transp{\widehat{B}^{-1}}\circ\widehat\omega (Y)\rangle\,,
    $$
    and using that $\transp{\widehat{B}^{-1}}\circ\widehat\omega=-\pr_\H$ we obtain the result. The proof of the other identity is analogous.
\end{proof}

Hence, taking the skew-symmetric part and using Corollary~\ref{cor:cool_formula}, we have:
\begin{corollary}
       Let $B$ be a non-degenerate 2-covariant tensor and denote by $\omega$ and $S$, respectively, its skew-symmetric and symmetric parts. If $\T M = \H \oplus \V$, then for every $X, Y \in \X (M)$ and $\beta \in \Omega^1(M)$
\begin{equation}\label{eq:lemma_horizontal}
    \llbracket\omega,\beta \rrbracket(X,Y)= (\Lie_{X_\beta } B)(X, Y_\mathcal{H})-(\Lie_{X_\beta } B)(Y, X_\mathcal{H}) -(\Lie_{X_\beta}\omega)(X,Y)\,,
\end{equation}
where $X_\beta = \transp{\widehat B}^{-1}(\beta)$.
Similarly,
$$\llbracket\omega,\beta \rrbracket(X,Y)= -(\Lie_{X_\beta} B)(X, Y_\mathcal{V}) + (\Lie_{X_\beta} B)(Y, X_\mathcal{V}) + (\Lie_{X_\beta}\omega)(X,Y)\,.$$

\end{corollary}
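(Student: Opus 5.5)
The statement is the corollary just above. I would take Corollary~\ref{cor:cool_formula} with $\beta$ fixed and $X_\beta = \transp{\widehat B}^{-1}(\beta)$, and substitute the preceding lemma with $Z = X_\beta$. The first expression in Corollary~\ref{cor:cool_formula} is
\[
\llbracket\omega,\beta\rrbracket = 2(\widehat \omega\circ \widehat B^{-1} \circ \Lie_{X_{\beta}} \widehat B)^{\mathrm{skew}}- \Lie_{X_\beta}\omega\,.
\]
So everything reduces to evaluating the skew-symmetric part of the 2-covariant tensor $T$ defined by $T(X,Y) = \langle (\widehat \omega\circ \widehat B^{-1} \circ \Lie_{X_{\beta}} \widehat B)(X), Y\rangle$.

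By the lemma, $T(X,Y) = (\Lie_{X_\beta}B)(X, Y_\H)$. Following the convention used in the proof of Corollary~\ref{cor:cool_formula}, $2T^{\mathrm{skew}}(X,Y) = T(X,Y) - T(Y,X)$. This gives
\[
2T^{\mathrm{skew}}(X,Y) = (\Lie_{X_\beta}B)(X,Y_\H) - (\Lie_{X_\beta}B)(Y,X_\H)\,,
\]
and subtracting $(\Lie_{X_\beta}\omega)(X,Y)$ yields Equation~\eqref{eq:lemma_horizontal}.

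The second identity follows in the same way from the second expression in Corollary~\ref{cor:cool_formula}, namely $-2(\widehat S\circ \widehat B^{-1} \circ \Lie_{X_\beta}\widehat B)^{\mathrm{skew}} + \Lie_{X_\beta}\omega$. Here I use the vertical version of the lemma, $\langle (\widehat S\circ\widehat B^{-1}\circ\Lie_Z\widehat B)(X),Y\rangle = (\Lie_Z B)(X,Y_\V)$, and antisymmetrize with the same convention.

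As a consistency check, I would subtract the two formulas. Using $Y = Y_\H + Y_\V$ and $X = X_\H + X_\V$, the difference should be
\[
(\Lie_{X_\beta}B)(X,Y) - (\Lie_{X_\beta}B)(Y,X) - 2(\Lie_{X_\beta}\omega)(X,Y) = 0\,,
\]
and it is, because $\Lie_{X_\beta}$ commutes with transposition, so the skew part of $\Lie_{X_\beta}B$ is $\Lie_{X_\beta}\omega$.

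The only delicate point is the factor-of-two and sign convention for $(\cdot)^{\mathrm{skew}}$. I would fix it exactly as in the proof of Corollary~\ref{cor:cool_formula}, where $2(\widehat\omega\circ\widehat K)^{\mathrm{skew}}(X,Y) = (\widehat\omega\circ\widehat K)(X,Y) - (\widehat\omega\circ\widehat K)(Y,X)$. With that convention there is no real obstacle, and the whole argument is direct substitution.
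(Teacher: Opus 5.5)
Your proposal is correct and follows the paper's own route: substitute the preceding lemma with $Z = X_\beta$ into the two expressions of Corollary~\ref{cor:cool_formula}, then antisymmetrize using $2T^{\mathrm{skew}}(X,Y) = T(X,Y) - T(Y,X)$. Your consistency check, subtracting the two formulas, is a valid sanity test but is not needed for the proof.
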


With this last result we can prove the following theorem:

\begin{theorem}
\label{thm:good-looking_formula}
Let $B$ be a non-degenerate 2-covariant tensor and let $\omega$ and $S$ denote, respectively, its skew-symmetric and symmetric parts.
    Assume that $\T M = \H \oplus \V$, with $\H = \Ker S$ and $\V = \Ker \omega$ vector subbundles. 
    Then, for any $X,Y,Z \in \X (M)$ we have
    \begin{align}
    \label{eq:good_formula}
        \frac{1}{2}\llbracket\omega,\omega\rrbracket(X,Y,Z) =&\ 
        \d \omega ( X_\H, Y_\H, Z_\H)\\ & + S([X_\H,Y_\H],Z_\V) + S([Y_\H,Z_\H],X_\V) + S([Z_\H,X_\H],Y_\V)\,.
    \end{align}
\end{theorem}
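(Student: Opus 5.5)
By Proposition~\ref{prop:properties_bracket}, both sides of~\eqref{eq:good_formula} are skew-symmetric in $X,Y,Z$, so it suffices to show they agree as $\Cinfty(M)$-trilinear maps. First I would check that the right-hand side is tensorial. For $\d\omega(X_\H,Y_\H,Z_\H)$ this is clear. For the curvature terms, $S([fX_\H,Y_\H],Z_\V)$ differs from $fS([X_\H,Y_\H],Z_\V)$ by $-(Y_\H f)\,S(X_\H,Z_\V)$, which vanishes because $\H=\Ker S$. The left-hand side is tensorial by construction. Hence I can work locally and decompose every argument into its $\H$ and $\V$ components. I would then check the identity separately on each type: $(3,0)$, $(2,1)$, $(1,2)$ and $(0,3)$. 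The right-hand side has no component of type $(1,2)$ or $(0,3)$, its $(2,1)$ part is given by the $S$-terms, and its $(3,0)$ part by $\d\omega$.

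To compute the left-hand side, I would use \eqref{eq:bracket_omega_Xf}. Since the vector fields $X_f=\widehat B^{-1}(\d f)$, with $f$ running over local coordinates, form a local frame, tensoriality lets me take $X=X_f$. Then
\[
\llbracket\omega,\omega\rrbracket(X_f,Y,Z)=-2\llbracket\omega,\beta\rrbracket(Y,Z)\,,\qquad \beta=\inn{X_f}\omega\,.
\]
From Section~\ref{sec:decomposition} we have $\transp{\widehat B}^{-1}\circ\widehat\omega=-\pr_\H$, so $X_\beta=-(X_f)_\H$. Substituting into \eqref{eq:lemma_horizontal} gives
\[
\tfrac12\llbracket\omega,\omega\rrbracket(X_f,Y,Z)=(\Lie_{W}B)(Y,Z_\H)-(\Lie_{W}B)(Z,Y_\H)-(\Lie_{W}\omega)(Y,Z)\,,\qquad W=(X_f)_\H\,.
\]
Here $W$ is an honest horizontal vector field. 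I would also use the vertical version of the corollary where it shortens the computation.

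Next I would expand each Lie derivative as $(\Lie_WB)(U,U')=W(B(U,U'))-B([W,U],U')-B(U,[W,U'])$ and simplify using $B(\cdot,h)=\omega(\cdot,h)$ for $h\in\H$, $B(\cdot,v)=S(\cdot,v)$ for $v\in\V$, $\omega(\V,\cdot)=0$ and $S(\H,\cdot)=0$. In type $(3,0)$, with $Y,Z$ horizontal, all $S$-contributions drop out. The terms then recombine, through the invariant formula for $\d\omega$ and the fact that only horizontal components of brackets pair nontrivially with $\omega$ on $\H$, into $\d\omega(W,Y,Z)$. In type $(2,1)$, for instance with $Y$ horizontal and $Z$ vertical, the $\omega$-terms cancel (since $\omega(\cdot,Z)=0$), leaving $S([W,Y],Z)$ with the correct sign. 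The case where the vertical slot is $X$ itself I would obtain from skew-symmetry rather than from a separate computation. In types $(1,2)$ and $(0,3)$, every surviving term should involve $S$ evaluated on a horizontal vector or $\omega$ on a vertical one, so each term vanishes.

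The main obstacle is the sign and term bookkeeping in the $(3,0)$ and $(2,1)$ cases. Two points need care: $W$ is $(X_f)_\H$ and not $X_f$, and the brackets $[W,Y]$ of horizontal fields generally have vertical components, which must be tracked explicitly. This is precisely where the curvature terms $S([X_\H,Y_\H],Z_\V)$ are produced. I would first carry out the $(2,1)$ case, since it is the shortest and fixes the sign conventions, and then use those conventions to run the $(3,0)$ case.
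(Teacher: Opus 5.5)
Your overall route is the paper's: reduce via \eqref{eq:bracket_omega_Xf} to $-2\llbracket\omega,\inn{X_f}\omega\rrbracket$, use $X_\beta=-(X_f)_\H$ in \eqref{eq:lemma_horizontal}, split the last two arguments, and conclude by tensoriality in $X_f$. Your mixed and vertical--vertical cases come out right.

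The horizontal--horizontal case, however, fails as stated. For $Y,Z\in\Gamma(\H)$ your formula reduces to $(\Lie_W\omega)(Y,Z)$. By Cartan's formula this equals $\d\omega(W,Y,Z)+\d(\inn{W}\omega)(Y,Z)$. The second term does not disappear by expanding brackets. Indeed, $Y\omega(W,Z)-Z\omega(W,Y)-\omega(W,[Y,Z])$ is not even $\Cinfty(M)$-linear in $W$ for a general horizontal $W$, so no bookkeeping of horizontal and vertical components of brackets can cancel it.

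The missing ingredient is the defining property $\widehat B(X_f)=\d f$, which you must use again after obtaining $X_\beta$. Since $\omega$ vanishes on $\V$, we have $\inn{W}\omega=\inn{X_f}\omega=\d f-\inn{X_f}S$, hence $\d(\inn{W}\omega)=-\d(\inn{X_f}S)$. Since $S$ vanishes on $\H$, $\d(\inn{X_f}S)(Y,Z)=-S(X_f,[Y,Z])$. Therefore
\[
\tfrac12\llbracket\omega,\omega\rrbracket(X_f,Y,Z)=\d\omega\big((X_f)_\H,Y,Z\big)+S\big((X_f)_\V,[Y,Z]\big)\,,
\]
which is the paper's case (i), cf.\ \eqref{eq:th_eq_1}--\eqref{eq:th_eq_2}.

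The underlying bookkeeping error is that, once you fix $X=X_f$, the evaluation at $(X_f,Y_\H,Z_\H)$ is not of type $(3,0)$. Since $X_f$ has a vertical part, this evaluation also contains the $(2,1)$ component with the vertical slot in the first argument. That is exactly the term $S([Y_\H,Z_\H],X_\V)$ you intended to recover ``from skew-symmetry''. Skew-symmetry does determine that component from your mixed case, but the component is present in this very evaluation. So your claim that everything recombines into $\d\omega(W,Y,Z)$ alone actually contradicts the statement.

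Concretely, take a contact form with $B=\d\eta+\eta\otimes\eta$. Then $\d\omega=0$, while $\tfrac12\llbracket\d\eta,\d\eta\rrbracket(X_f,Y,Z)=-R(f)\,\d\eta(Y,Z)\neq0$ for $Y,Z\in\Gamma(\Ker\eta)$. With the extra term above, the three cases assemble exactly as in the paper, and tensoriality in $X_f$ finishes the proof.
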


\begin{proof}
    Let $f \in \Cinfty(M)$ and $X_f={\widehat B^{-1}}(\d f)$
    the associated vector. 
    If we denote by $\beta\coloneqq\inn{X_f}\omega $ then the associated vector field is $X_\beta = \transp{\widehat B }^{-1}(\widehat\omega(X_f))$ and, using that $\transp{\widehat B}^{-1}\circ\widehat \omega = -\pr_\H$, we obtain
    $$X_{\beta} = -\pr_\H (X_f)\,.$$
    In particular, $X_\beta$ is a horizontal vector field.
    Also, by Equation~\eqref{eq:bracket_omega_Xf} we have
    $$\iota_{X_f}\llbracket \omega,\omega\rrbracket = -2\llbracket\omega,\iota_{X_f}\omega\rrbracket =-2\llbracket \omega, \beta \rrbracket\,.$$
    
    Now, using this last expression and  Equation~\eqref{eq:lemma_horizontal}, let us evaluate the expression for the different possible combinations of horizontal and vertical vector fields:
    \begin{enumerate}[\rm (i)]
        \item \underline{$X ,Y $ are horizontal}: Then
        $$\llbracket\omega,\beta\rrbracket(X,Y)=(\Lie_{X_\beta} B)(X, Y)-(\Lie_{X_\beta} B)(Y, X)-(\Lie_{X_\beta}\omega)(X,Y)\,.$$
    and using that $S$ is symmetric and $B=\omega +S$, 
    the equation reduces to
    $\llbracket\omega,\beta\rrbracket(X,Y)=(\Lie_{X_\beta}\omega)(X,Y)$ $=(\inn{X_\beta}\d\omega)(X,Y)+ (\d(\inn{X_\beta}\omega))(X,Y)$.
    Now, note that from $X_\beta= -\pr_\H (X_f)$ we obtain
    $\inn{X_f}\omega= -\inn{X_\beta}\omega$, and hence also
    \begin{equation}\label{eq:th_eq_1}
        \d(\inn{X_\beta}\omega)=-\d(\iota_{X_f}\omega)=-\d(\d f- \widehat S(X_f))=\d (\widehat S(X_f))\,.
    \end{equation}
    Now,
    \begin{equation}\label{eq:th_eq_2}
        \d(\inn{X_f} S)(X, Y) =  X(S(X_f, Y)) - Y(S(X_f, X)) - S\left(X_f, [X, Y] \right)=-S(X_f,[X,Y])\,,
    \end{equation}
    so, finally, by Equation~\eqref{eq:bracket_omega_Xf}, we obtain
    $$\frac{1}{2}\left(\inn{X_f}\llbracket\omega,\omega\rrbracket\right)(X, Y)=
    -\llbracket\omega,\inn{X_f}\omega\rrbracket(X, Y) = \d\omega(\pr_\H(X_f), X, Y) + S(X_f,[ X, Y])\,.$$
    
    \item \underline{$X,Y$ are vertical}: Then
        $$\llbracket\omega,\beta\rrbracket(X,Y) = -\Lie_{X_\beta}\omega (X,Y)=-( X_\beta\left(\omega(X, Y)\right) - \omega([X_\beta, X], Y) - \omega(X, [X_\beta, Y]))=0\,,$$
        so that
         $\frac{1}{2}(\inn{X_f}\llbracket\omega,\omega\rrbracket)(X, Y) = 0$.
         \item \underline{$X$ is horizontal and $Y$ is vertical}: Then
         $$\llbracket\omega,\beta\rrbracket(X,Y)=-(\Lie_{X_{\beta}} B)(Y,X)- (\Lie_{X_{\beta}}\omega) (X,Y)=-\Lie_{X_\beta}S(X,Y)\,,$$
         and using that 
          $$(\Lie_{X_\beta} S)(X, Y) = X_\beta\left(S(X,Y)\right) - S([X_\beta, X], Y) - S(X, [X_\beta, Y])=- S([X_\beta, X],Y)\,,$$
          we have
          $\llbracket\omega,\beta\rrbracket(X,Y)= S([X_\beta, X],Y)$.
          Hence,
           $$\frac{1}{2}(\inn{X_f}\llbracket\omega,\omega\rrbracket)(X, Y) = S([\pr_\H(X_f), X],Y)\,.$$
           If $X$ is vertical and $Y$ is horizontal then, analogously, we get
           $$\frac{1}{2}(\inn{X_f}\llbracket\omega,\omega\rrbracket)(X, Y) = -S([\pr_\H(X_f), Y],X)\,.$$
            \end{enumerate}
            Finally, since both sides of the identity are $\Cinfty(M)$-linear in $X_f$, and local vector fields of the form $X_f = \widehat{B}^{-1}(\d f)$ generate $\X(M)$ as a $\Cinfty(M)$-module, $X_f$ can be replaced by an arbitrary vector field $Z \in \X(M)$.
            Hence, for any $X,Y,Z \in\X(M)$ we have
            \begin{align}
                \frac{1}{2}\llbracket\omega,\omega\rrbracket(Z,X,Y) 
                &=
                \frac{1}{2}\Big(\llbracket\omega,\omega\rrbracket(Z,X_\H,Y_\H)+\llbracket\omega,\omega\rrbracket(Z,X_\V,Y_\V)\\
                &\quad\ +\llbracket\omega,\omega\rrbracket(Z,X_\H,Y_\V)+\llbracket\omega,\omega\rrbracket(Z,X_\V,Y_\H)\Big)\\
                &=
                \d \omega (Z_\H, X_\H, Y_\H) + S([X_\H,Y_\H],Z_\V)+S([Z_\H,X_\H],Y_\V)+S([Z_\H,Y_\H],X_\V)\,,
            \end{align}
            which concludes the proof.
\end{proof}

\subsection{An interpretation in terms of the curvature
of the horizontal distribution}

Now we want to give an interpretation of this formula in terms of a certain curvature.

First, viewing the projection as an endomorphism $\pr_\H \colon \T M \to \T M$ with image~$\H$, the induced map on forms gives
$$\left((\sksym{3}\transp{\pr_\H})(\d\omega)\right)(X,Y,Z) = \d\omega(X_\H, Y_\H, Z_\H)\,.$$

Second, recall that, 
if $\mathcal{H} \subset \T M$ is a vector subbundle,
a general notion of curvature 
(see for instance \cite[p.\ 73]{Mon_2002})
is provided by the \dfn{curvature operator}
$$
\curv_\mathcal{H} \colon \mathcal{H} \otimes \mathcal{H} \rightarrow \T M/ \mathcal{H} 
\,,
\qquad 
\curv_\H (X_\H, Y_\H) = 
[X_\H,Y_\H] \mod \H
\,,
$$
which vanishes if, and only if, $\mathcal{H}$ is integrable.
This map is well defined
since it is $\Cinfty(M)$-bilinear when applied to two ``horizontal'' sections: 
$$
[f X_\H, Y_\H] = f[X_\H, Y_\H] - (\Lie_{Y_\H} f)X_\H
\,.
$$
When we have a direct sum decomposition 
$\T M = \mathcal{H} \oplus \mathcal{V}$ 
we have a canonical isomorphism 
$\T M/ \mathcal{H} \cong \mathcal{V}$,
therefore the curvature operator of~$\H$
(and also that of~$\V$, if needed)
can be interpreted as a skew-symmetric map
$$
\curv_\mathcal{H} \colon \mathcal{H} \otimes \mathcal{H} \rightarrow \mathcal{V}
\,,\qquad
\curv_\H (X_\H, Y_\H) = 
\pr_\V[X_\H,Y_\H]
\,.
$$
This can be interpreted also as 
$$
\curv_\mathcal{H} \colon \T M \otimes \T M \longrightarrow \T M
\,,\qquad
\curv_\H (X, Y) = 
\pr_\V[X_\H,Y_\H]
\,,
$$
that is, a vector valued 2-form
$\curv_\H \in \Omega^2(M; \T M)$,
see for instance \cite[p.~73]{KMS_1993}.


Now we consider the symmetric part~$S$ of~$B$
as a vector valued 1-form 
$\widehat S \in \Omega^1(M; \T^*M)$.
The canonical pairing between 
$\T^*M$ and $\T M$ 
induces an exterior product of differential forms
with values in these vector bundles
\cite[p.~82]{Bou_1983}.
When applied to $\widehat S$ and $\curv_{\H}$
we obtain the (scalar valued) 3-form defined by
\begin{equation*}   
\begin{split}
(\widehat S \barwedge \curv_{\H}) (X,Y,Z) 
&=
\langle S(X) , \curv_{\H}(Y,Z)) \rangle \ +\ \text{permutations}
\\
&=
S(X,\curv_{\H}(Y,Z)) \ +\ \text{permutations}
\\
&=
S(X,\pr_\V [Y_\H,Z_\H]) \ +\ \text{permutations}
\\
&=
2 S(X_\V,[Y_\H,Z_\H]) \ +\ \text{cyclic permutations}
\,,
\end{split}
\end{equation*}
where in the last equality we have used that $\Ker S = \H$
and that $S$ is symmetric.
In this expression we recognize the last terms of
Eqn.~\eqref{eq:good_formula}.
%
%
Therefore, we can rewrite Theorem~\ref{thm:good-looking_formula} as follows:
\begin{theorem}
\label{th:big_theorem} 
Let $B$ be a non-degenerate 2-covariant tensor such that $\T M = \H \oplus \V$, where $\H = \Ker S$ and $\V = \Ker \omega$,
and $\omega$ and $S$ denote, respectively, the skew-symmetric and symmetric parts of $B$. Then,
\begin{equation}
\label{eq:big_formula}
\frac{1}{2}\llbracket\omega,\omega\rrbracket 
= 
(\sksym{3}\transp{\pr_\H})(\d \omega) 
+ 
\frac12 \,\widehat S \barwedge \curv_{\H}
\end{equation}
\end{theorem}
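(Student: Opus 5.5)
This statement is a reformulation of Theorem~\ref{thm:good-looking_formula}, so the plan is to evaluate both sides of \eqref{eq:big_formula} on arbitrary $X,Y,Z \in \X(M)$ and compare them term by term with \eqref{eq:good_formula}. No further analysis of the bracket $\llbracket\cdot,\cdot\rrbracket$ is needed. By Theorem~\ref{thm:good-looking_formula}, the left-hand side equals $\d\omega(X_\H,Y_\H,Z_\H)$ plus the three terms of the form $S([\,\cdot_\H,\cdot_\H],\cdot_\V)$. It therefore suffices to identify each of the two summands on the right-hand side.

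For the first summand, I will use that the pullback of a form by the endomorphism $\pr_\H$ simply inserts the projections in every slot. Hence $(\sksym{3}\transp{\pr_\H})(\d\omega)(X,Y,Z)=\d\omega(X_\H,Y_\H,Z_\H)$, as noted just before the statement. This takes care of the $\d\omega$ term.

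For the second summand, I will expand $\widehat S\barwedge\curv_\H$ using the definition of the exterior product of vector-valued forms through the pairing $\T^*M\otimes \T M\to\R$. Since $\widehat S$ is a $1$-form and $\curv_\H$ is a $2$-form, the wedge is the alternating sum over the three cyclic arrangements, each appearing twice with matching sign because $\curv_\H$ is skew-symmetric. This gives
\[
(\widehat S\barwedge\curv_\H)(X,Y,Z)=2\big(S(X,\pr_\V[Y_\H,Z_\H])+S(Y,\pr_\V[Z_\H,X_\H])+S(Z,\pr_\V[X_\H,Y_\H])\big).
\]
Next I will use $\Ker S=\H$ and the symmetry of $S$. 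These give $S(X,\pr_\V W)=S(X_\V,\pr_\V W)=S(X_\V,W)$, because the horizontal components of both arguments are killed. So $\tfrac12\,\widehat S\barwedge\curv_\H$ evaluated on $(X,Y,Z)$ equals
\[
S([Y_\H,Z_\H],X_\V)+S([Z_\H,X_\H],Y_\V)+S([X_\H,Y_\H],Z_\V),
\]
which are exactly the remaining terms of \eqref{eq:good_formula}.

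The only delicate point is the normalisation of the exterior product $\barwedge$ in the convention of \cite{Bou_1983}: whether the sum over permutations carries a factor such as $1/2!$. I would fix this by checking the convention against the computation displayed right before the statement, which produces the factor $2$ and hence the $\tfrac12$ in \eqref{eq:big_formula}. Once that is settled, the two sides agree on all triples $(X,Y,Z)$, and the identity follows.
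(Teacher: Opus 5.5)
Your proposal is correct and is essentially the paper's own argument: the statement is obtained from Theorem~\ref{thm:good-looking_formula} by identifying $\d\omega(X_\H,Y_\H,Z_\H)$ with $(\sksym{3}\transp{\pr_\H})(\d\omega)(X,Y,Z)$, and by expanding $\widehat S\barwedge\curv_\H$ as $2S(X_\V,[Y_\H,Z_\H])$ plus cyclic permutations, using $\Ker S=\H$ and the symmetry of $S$. You also resolve the normalisation the way the paper does: its displayed computation takes the unnormalised signed sum over all permutations, and that is exactly what produces the factor $2$ absorbed by the $\tfrac12$.
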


\begin{remark}
    Note that the curvature of the vertical distribution does not appear in this expression. Moreover, it is worth pointing out that this formula resembles the Cartan structure equation for a connection on a principal bundle (see, for instance,~\cite[Ch.~II, \S5]{kobayashi_nomizu_1963}). This connection to principal bundle theory and fat bundles will be made explicit in Section~\ref{section:Examples}.
\end{remark}

\begin{remark}
\label{rk:formula_S_coordinates}
As $S$ is a constant-rank symmetric tensor, it can be diagonalized locally. Thus, we can write
$$ S = \sum_{k=1}^r \lambda_k \tau_k \otimes \tau_k \,, $$
for some non-vanishing local $1$-forms $\tau_k$ and functions $\lambda_k \in \Cinfty(M)$, and $r=\rank \V$. Since the horizontal distribution is defined as $\H = \Ker S$, the $1$-forms $\tau_k$ must vanish identically on any horizontal vector field. Since $S|_{\mathcal{V}}$ is non-degenerate, the set $\{\tau _k\}$ defines a basis on $\mathcal{V}^\ast$. Let $\{R_k\}$ be its induced basis on $\mathcal{V}$. Then, the curvature of $\mathcal{H} = \Ker S$ may be written as 
\[
\curv_\mathcal{H} (H_1, H_2)= \sum_{k} \tau_k([H_1, H_2]) R_k = -\sum_k (\d \tau_k)(H_1, H_2) R_k\,,
\]
for arbitrary $H_1$ and $H_2$ horizontal vector fields. 
Then, using that
$$
(\left(\sksym{2}\transp{\pr_\H}\right)(\d\tau_k) \wedge \tau_k)(X, Y, Z) =  \d\tau_k(X_\H, Y_\H) \tau_k(Z_\V)+ \text{cyclic permutations}$$
we have 
\[
\frac12 \,\widehat S \barwedge \curv_{\H} = -\sum_k \lambda_k\left((\sksym{2}\transp{\pr_\H})(\d\tau_k)\right) \wedge \tau_k\,,
\]
and so, by Equation~\eqref{eq:big_formula},
\begin{align}\label{eq:coord_formula}
   \frac{1}{2}\llbracket\omega,\omega\rrbracket= (\sksym{3}\transp{\pr_\H})(\d \omega) - \sum_k \lambda_k  \left((\sksym{2}\transp{\pr_\H})(\d\tau_k)\right) \wedge \tau_k \,.
\end{align}
\end{remark}

\subsection{Poisson and Jacobi bivector fields}

In this section, we use Theorem~\ref{th:big_theorem} to characterise the conditions under which a non-degenerate $2$-covariant tensor field $B$ defines a Poisson or Jacobi structure. Throughout, we maintain the assumption that the tangent bundle splits as $\T M = \H \oplus \V$, with $\H \coloneqq \Ker S$ and $\V \coloneqq \Ker \omega$ vector subbundles, defined as the kernels of the symmetric and skew-symmetric parts of a non-degenerate 2-covariant tensor field $B$.

As we commented before, the condition for the bivector
$\Lambda = - \big( \sksym{2} {\widehat B}^{-1}\big)(\omega)$ to be Poisson can be written with the Schouten--Nijenhuis bracket as
$[\Lambda,\Lambda]=0$, or, equivalently, as 
$$\llbracket\omega, \omega \rrbracket=0\,,$$
with $\llbracket\cdot,\cdot\rrbracket$ the conjugate Schouten--Nijenhuis bracket associated with $B$. Thus, as a direct consequence of Theorem~\ref{th:big_theorem} we have:
\begin{theorem}\label{th:Poisson_general}
    Let $B \in \Gamma(\T^\ast M \otimes \T^\ast M)$ be a non-degenerate tensor such that $\T M = \H \oplus \V$, with $\H \coloneqq \Ker S$ and $\V \coloneqq \Ker \omega$ are vector subbundles, defined, respectively, by the symmetric and skew-symmetric parts of $B$. Then, the associated bivector field $\Lambda = - \big( \sksym{2} {\widehat B}^{-1}\big)(\omega)$ defines a Poisson structure if, and only if, the following conditions hold
    \begin{enumerate}[\rm (i)]
    
        \item The exterior derivative of $\omega$ vanishes on the horizontal distribution, so $\left(\sksym{3}\transp{\pr_\H}\right)(\d \omega)=0$.
         
        \item The horizontal distribution $\mathcal{H}$ is integrable. 
         
     \end{enumerate}
\end{theorem}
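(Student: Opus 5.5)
The plan is to reduce the Poisson condition to the vanishing of the two summands in Theorem~\ref{th:big_theorem}, and then show these summands vanish independently because they live in different type components. Since $\transp{\widehat B}$ is a pointwise isomorphism, $[\Lambda,\Lambda]=0$ holds if, and only if, $\llbracket\omega,\omega\rrbracket=0$. By Equation~\eqref{eq:big_formula}, this amounts to
\[
(\sksym{3}\transp{\pr_\H})(\d\omega) + \tfrac12\,\widehat S\barwedge \curv_\H = 0 .
\]

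Next I would split this 3-form according to the decomposition $\sksym{3}\T^*M=\bigoplus_j \sksym{j}\H^*\otimes\sksym{3-j}\V^*$ from Section~\ref{sec:decomposition}. The first summand, $(\X,Y,Z)\mapsto \d\omega(X_\H,Y_\H,Z_\H)$, vanishes whenever one argument is vertical, so it is of type $(3,0)$.

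For the second summand I would use the explicit form from Theorem~\ref{thm:good-looking_formula}, namely the terms $S([X_\H,Y_\H],Z_\V)+\text{cyclic}$. Each term needs exactly two horizontal projections and one vertical one. Evaluating on three horizontal fields gives zero, and so does evaluating with two or more vertical fields. Hence this summand is of type $(2,1)$.

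Since the type decomposition is a direct sum, the total vanishes if, and only if, each component vanishes. This gives condition (i) directly for the $(3,0)$ part.

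For the $(2,1)$ part, I need to show that $S(\pr_\V[X_\H,Y_\H],Z_\V)=0$ for all $X,Y,Z$ is equivalent to integrability of $\H$. Taking $X,Y$ horizontal and $Z$ vertical, the expression reduces to $S(\curv_\H(X,Y),Z)$, because $S$ kills the $\H$-component. By the proof of Proposition~\ref{prop:inner_product_non_deg}(iv), $S|_\V$ is non-degenerate since $\V\cap\Ker S=0$. So the vanishing for all vertical $Z$ is equivalent to $\curv_\H=0$. By Frobenius this is integrability of $\H$. Conversely, if $\curv_\H=0$ the whole summand vanishes.

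The only delicate step is verifying the type bookkeeping carefully, namely that no term of the curvature summand survives on $(3,0)$ or $(1,2)$ arguments. This is immediate from $\Ker S=\H$ and $\pr_\V$ annihilating $\H$. The argument then reduces to the non-degeneracy of $S$ on $\V$, which is already established.
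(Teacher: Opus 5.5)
Your proposal is correct and follows the paper's argument. The paper likewise obtains the statement directly from Theorem~\ref{th:big_theorem}, observing that the type-$(3,0)$ term $(\sksym{3}\transp{\pr_\H})(\d\omega)$ and the type-$(2,1)$ term $\frac12\,\widehat S\barwedge\curv_\H$ cannot cancel. You additionally make explicit two steps the paper leaves implicit: the type bookkeeping, and the use of the non-degeneracy of $S|_\V$ to pass from the vanishing of the $(2,1)$ part to $\curv_\H=0$.
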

Note that we have used that the two 3-forms on the right-hand side of Equation~\eqref{eq:big_formula} have different types: specifically, $(\sksym{3}\transp{\pr_\H})(\d \omega)$ is of type $(3,0)$, while $\frac12 \,(\widehat S \barwedge \curv_{\H})$ is of type $(2,1)$, with respect to the canonical decomposition
\[
\Omega^3(M) = \bigoplus_{j=0}^3 \Gamma\left( \sksym{j} \H^\ast \otimes \sksym{3-j} \V^\ast \right),
\]
induced by the splitting $\T M = \H \oplus \V$. Therefore, they cannot cancel each other. We will also use this in the sequel.

Now, let us study the more general case of Jacobi structures. For this, we first determine when
$$\frac{1}{2}\llbracket\omega,\omega\rrbracket = \alpha \wedge \omega\,,$$
for some 1-form $\alpha$.
By Equation~\eqref{eq:good_formula}, this holds if, and only if, for any $X,Y,Z\in \X(M)$ we have
$$(\d\omega)(X_\mathcal{H}, Y_\mathcal{H}, Z_\mathcal{H}) =  (\alpha \wedge \omega)(X_\mathcal{H}, Y_\mathcal{H}, Z_\mathcal{H})\,,$$
and
$$S([X_\mathcal{H}, Y_\mathcal{H}], Z_\mathcal{V}) =  \alpha(Z_\mathcal{V})\omega(X_\mathcal{H}, Y_\mathcal{H})\,.$$
Note that this last condition implies that the image of the curvature of the horizontal distribution has dimension at most one. Indeed, if we let $X_\alpha = \transp{\widehat B}^{-1}(\alpha)$ then for any $Z_\V \in \V$ we have
$$\alpha(Z_\V)= \langle \transp{\widehat B}(X_\alpha), Z_\V \rangle = \langle \widehat B(Z_\V), X_\alpha \rangle = B(Z_\V, X_\alpha)= S((X_\alpha)_\V,Z_\V)\,,$$
where we have used that $B|_\V = S|_\V$. Since the restriction of $S$ to $\V$ is non-degenerate, we have
$$\curv_\H(X_\mathcal{H}, Y_\mathcal{H})=  \omega(X_\mathcal{H}, Y_\mathcal{H})(X_\alpha)_\V \,.$$
In particular, it is enough to look for the horizontal and vertical components of $\alpha$ which, if we denote by $\overline{\alpha} \in \Gamma(\H^\ast)$ and $\transp{\widehat{B}}(R)\in \Gamma(\V^*)$, for some $R \in \Gamma(\V)$, need to satisfy
\begin{equation}
    \left(\sksym{3}\transp{\pr_\H}\right)(\d \omega) = \overline{\alpha} \wedge \left(\sksym{2}\transp{\pr_\H}\right)(\omega) \qquad \text{and} \qquad \curv_\H(X_\H, Y_\H) = \omega(X_\H, Y_\H) R\,.
\end{equation}

Secondly, we need to check under which conditions the equality
$$\llbracket\omega,\alpha\rrbracket=0$$
holds. Recall that, for any $X,Y\in \X(M)$, we have
$$\llbracket\omega,\alpha\rrbracket(X,Y)=(\Lie_{X_\alpha} B)(X, Y_\mathcal{H})-(\Lie_{X_\alpha} B)(Y, X_\mathcal{H})-(\Lie_{X_\alpha}\omega)(X,Y)\,,$$
with $X_\alpha=\transp{\widehat B}^{-1}(\alpha)$. We may reduce to studying the following three cases:
\begin{enumerate}[\rm (i)]
    \item If $X$ and $Y$ are horizontal, then
    $$\llbracket\omega,\alpha\rrbracket(X,Y)=(\Lie_{X_\alpha}\omega)(X,Y)\,,$$
    \item If $X$ and $Y$ are vertical, then $$\llbracket\omega,\alpha\rrbracket(X,Y)=-(\Lie_{X_\alpha}\omega)(X,Y) = -X_\alpha(\omega(X,Y)) + \omega([X_\alpha, X], Y) + \omega(X, [X_\alpha, Y])=0\,.$$
    \item If $X$ is horizontal and $Y$ is vertical, then
    \begin{align}
      \llbracket\omega,\alpha\rrbracket(X,Y) &=
      -(\Lie_{X_\alpha} S )(X,Y)= S([X_\alpha, X], Y)\,.  
    \end{align} 
\end{enumerate}
After these considerations, we obtain the following:
\begin{theorem}
\label{thm:Jacobi_general}
     Let $B \in \Gamma(\T^\ast M \otimes \T^\ast M)$ be a non-degenerate tensor and suppose that $\T M = \H \oplus \V$, where $\H \coloneqq \Ker S$ and $\V \coloneqq \Ker \omega$ are vector subbundles, defined, respectively, by the symmetric and skew-symmetric parts of $B$. Then, the bivector field $\Lambda = - \big( \sksym{2} {\widehat B}^{-1}\big)(\omega)$ can be completed to a Jacobi structure if, and only if, there exists a $1$-form of type $(1,0)$, $\overline{\alpha} \in \Gamma(\H^\ast)$, and a vector field $R \in \Gamma(\V)$ such that:
     \begin{enumerate}[\rm (i)]
         \item
         $\left(\sksym{3}\transp{\pr_\H}\right)( \d\omega)=\overline{\alpha}\wedge \left(\sksym{2}\transp{\pr_\H}\right)(\omega)$,
         \item $\curv_\H (X_\H,Y_\H) =\omega (X_\H,Y_\H) R$,
         \item 
         If we write $\alpha= \overline{\alpha}+\transp{\widehat B}(R)$ and $X_\alpha= \transp{\widehat{B}^{-1}}(\alpha)$, then we have $\left(\sksym{2}\transp{\pr_\H}\right)(\Lie_{X_{\alpha}} \omega)  = 0$ and $[X_{\alpha}, \H] \subseteq \H$.
     \end{enumerate}
     The Jacobi structure is then given by $\Lambda = -\big( \sksym{2} {\widehat B}^{-1}\big)(\omega)$ and $E=X_\alpha=\transp{ \widehat B^{-1}}(\overline{\alpha})+R$.
\end{theorem}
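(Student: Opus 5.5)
The plan is to translate the Jacobi conditions for $(\Lambda,E)$ into the conjugate bracket. By the discussion after the definition of $\llbracket\cdot,\cdot\rrbracket$, $\Lambda$ can be completed to a Jacobi structure if, and only if, there exists $\alpha\in\Omega^1(M)$ with
\[
\tfrac12\llbracket\omega,\omega\rrbracket=\alpha\wedge\omega \qquad\text{and}\qquad \llbracket\omega,\alpha\rrbracket=0,
\]
and then $E=\transp{\widehat B}^{-1}(\alpha)$. I will treat the two equations separately.

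\textbf{First equation.} Decompose both sides according to the type splitting induced by $\T M=\H\oplus\V$. Since $\omega$ vanishes on $\V$, it is of type $(2,0)$. Hence $\alpha\wedge\omega$ has only a $(3,0)$ component, coming from $\overline\alpha=\alpha|_\H$, and a $(2,1)$ component, coming from $\alpha|_\V$. By Theorem~\ref{thm:good-looking_formula}, $\frac12\llbracket\omega,\omega\rrbracket$ also has only $(3,0)$ and $(2,1)$ components, so I compare them type by type.

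The $(3,0)$ parts agree exactly when condition (i) holds. For the $(2,1)$ parts, write $\alpha|_\V$ as $\transp{\widehat B}(R)$ restricted to $\V$, with $R\in\Gamma(\V)$. This is possible because $\alpha(Z_\V)=S((X_\alpha)_\V,Z_\V)$ and $S|_\V$ is non-degenerate, as computed just before the statement. The $(2,1)$ identity then reads $S(\curv_\H(X_\H,Y_\H),Z_\V)=\omega(X_\H,Y_\H)\,S(R,Z_\V)$ for every $Z_\V$. Non-degeneracy of $S|_\V$ turns this into condition (ii).

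One point needs care here: the decomposition $\alpha=\overline\alpha+\transp{\widehat B}(R)$ must be legitimate, with $\overline\alpha$ of type $(1,0)$. Since $\transp{\widehat B}(R)=\widehat S(R)$ annihilates $\H=\Ker S$, the form $\transp{\widehat B}(R)$ is of type $(0,1)$. So the $(1,0)$ part of $\alpha$ is exactly $\overline\alpha$ and its $(0,1)$ part is $\widehat S(R)$, and the correspondence between $\alpha$ and the pair $(\overline\alpha,R)$ is bijective.

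\textbf{Second equation.} Use Equation~\eqref{eq:lemma_horizontal} with $\beta=\alpha$ and evaluate on the three combinations listed before the statement.
\begin{itemize}
\item On horizontal pairs, the bracket equals $(\Lie_{X_\alpha}\omega)(X,Y)$. Its vanishing is the first half of condition (iii).
\item On vertical pairs, the bracket vanishes automatically. Note that the computation there uses only $\omega(\V,\cdot)=0$, so $X_\alpha$ need not be horizontal.
\item On mixed pairs, with $X$ horizontal and $Y$ vertical, the bracket equals $S([X_\alpha,X],Y)$. Since $S|_\V$ is non-degenerate and $\Ker S=\H$, this vanishes for all vertical $Y$ if and only if $[X_\alpha,X]\in\H$ for all horizontal $X$. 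That is $[X_\alpha,\H]\subseteq\H$.
\end{itemize}

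The main obstacle is the mixed case, because the earlier simplification $\llbracket\omega,\alpha\rrbracket(X,Y)=-(\Lie_{X_\alpha}S)(X,Y)$ needs checking when $X_\alpha$ is not horizontal. I will redo it directly from \eqref{eq:lemma_horizontal}. With $X\in\H$ and $Y\in\V$, the first term $(\Lie_{X_\alpha}B)(X,Y_\H)$ vanishes since $Y_\H=0$. The remaining two terms combine to $-(\Lie_{X_\alpha}B)(Y,X)-(\Lie_{X_\alpha}\omega)(X,Y)=-(\Lie_{X_\alpha}S)(Y,X)$. Expanding this Lie derivative gives $-X_\alpha(S(Y,X))+S([X_\alpha,Y],X)+S(Y,[X_\alpha,X])$. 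The first two terms vanish because $X\in\Ker S$, which leaves $S([X_\alpha,X],Y)$.

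Combining both parts gives the stated equivalence, with $E=X_\alpha=\transp{\widehat B}^{-1}(\overline\alpha)+R$. The last equality uses $\transp{\widehat B}^{-1}\transp{\widehat B}(R)=R$.
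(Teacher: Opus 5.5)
Your proposal is correct and follows essentially the same route as the paper. You translate the Jacobi conditions into $\tfrac12\llbracket\omega,\omega\rrbracket=\alpha\wedge\omega$ and $\llbracket\omega,\alpha\rrbracket=0$, compare the $(3,0)$ and $(2,1)$ components via Theorem~\ref{thm:good-looking_formula}, using non-degeneracy of $S|_\V$ to identify the vertical part of $\alpha$ with $\transp{\widehat B}(R)$, and then split $\llbracket\omega,\alpha\rrbracket$ into horizontal, vertical and mixed pairs with Equation~\eqref{eq:lemma_horizontal}; your extra check that the vertical and mixed cases do not need $X_\alpha$ to be horizontal is right, and it is consistent with the paper, since Equation~\eqref{eq:lemma_horizontal} holds for arbitrary $\beta$.
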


\begin{example}
As a simple but enlightening example, we study the following problem. Let $(M, \omega)$ be a symplectic manifold and $W$ be a symplectic distribution, which yields the direct sum decomposition $\T M = W \oplus W^{\perp, \omega}$. Let $P \colon \T M \rightarrow W$ denote the natural projection onto $W$, and let $\Lambda$ denote the usual Poisson bivector field associated with $\omega$, i.e.\
$\Lambda (\alpha, \beta) = - \omega(\widehat{\omega}^{-1}(\alpha), \widehat{\omega}^{-1}(\beta))$.
This, obviously, coincides with the bivector field induced by $B=\omega$.
The question we aim to answer is: under which conditions does the projected bivector field $P(\Lambda) \in \Gamma(\sksym{2} W)$ define a Poisson structure?

It is a well-known fact \cite{bhaskara1988poisson} that this occurs if, and only if, the distribution $W$ is involutive. We may recover this classical result through our framework as follows.
Let $g$ be a Euclidean inner product on $W^{\perp, \omega}$. Using the decomposition of $\T M$, we can define a new non-degenerate 2-covariant tensor $B'$ by extending both $\omega|_W$ and $g$ to the entire tangent bundle:$$B' \coloneqq \omega|_{W} + g\,.$$
This construction satisfies the hypotheses of Theorem \ref{th:big_theorem}, and it is straightforward to verify that $P(\Lambda)$ coincides with the bivector field $\Lambda= - \Big( \sksym{2} \widehat {B'}^{-1}\Big) (\restr{\omega}{W})$.
Applying Theorem \ref{th:big_theorem}, we find that $P(\Lambda)$ is a Poisson structure if, and only if, $W$ is integrable and $\left(\sksym{3}\transp{\pr_W} \right)(\d (\omega|_W)) = 0$. To conclude the proof, we just need to observe that because the original form $\omega$ is closed, the integrability of $W$ immediately implies that $\left(\sksym{3}\transp{\pr_W} \right)(\d (\omega|_W))= 0$.
    
\end{example}

\subsection{Some remarks about the general case}

In the general case, $B \in \Gamma\left(\T^\ast M \otimes \T^\ast M\right)$ does not satisfy the hypothesis $\T M = \Ker S \oplus \Ker \omega$
of Theorem~\ref{thm:good-looking_formula}.
Nevertheless, under some regularity conditions,
we can change part of the structure so that we can apply the theorem.
First, let us point out the following:

\begin{proposition} Any bivector field $\Lambda \in \mathfrak{X}^2(M)$ can be written as $\Lambda=- \big( \sksym{2} {\widehat B}^{-1}\big)(\omega)$, for some non-degenerate tensor $B \in \Gamma(\T^\ast M \otimes \T^\ast M)$ with skew-symmetric part $\omega$. 
\end{proposition}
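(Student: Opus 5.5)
The plan is to construct $B$ directly from $\Lambda$ by inverting a suitable non-degenerate 2-contravariant tensor. This is the global, possibly non-regular analogue of Proposition~\ref{prop:existence_of_equivalent_B}. The difficulty there was choosing a complement $F$ to $\Ker\Lambda$, which may not have constant rank here. We avoid that choice entirely by adding a \emph{definite} symmetric part. By paracompactness, choose a Riemannian metric on $M$; equivalently, choose a positive-definite symmetric 2-contravariant tensor field $\Sigma \in \Gamma(\sym{2}\T M)$. Set
$$
C \coloneqq \Lambda + \Sigma \in \Gamma(\T M \otimes \T M)\,.
$$
Pointwise, $C$ is a bilinear form on $\T_x^\ast M$ whose symmetric part is an inner product. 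By Corollary~\ref{cor:inner_product_non_deg}, applied to $E = \T^\ast_x M$, $C$ is non-degenerate at every point, with no constant-rank assumption needed.

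Next, define $B \coloneqq C^{-1}$, the inverse bilinear map, fibrewise. Smoothness is clear, since $\widehat{C}$ is a smooth vector bundle isomorphism $\T^\ast M \to \T M$, so its inverse is smooth. Then $B$ is a non-degenerate 2-covariant tensor field with $B^{-1} = C$. This uses the finite-dimensional identification, under which inverting twice returns the original form.

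Finally, identify $\Lambda$ from $B$ using the earlier proposition computing the skew-symmetric part of an inverse. Applied to $B$, with $\omega$ the skew-symmetric part of $B$, it says that the skew-symmetric part of $B^{-1}$ equals $-\big(\sksym{2}\widehat B^{-1}\big)(\omega)$. But $B^{-1} = C$, whose skew-symmetric part is exactly $\Lambda$, because $\Sigma$ is symmetric. Hence
$$
\Lambda = -\big(\sksym{2}\widehat B^{-1}\big)(\omega)\,,
$$
as required.

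The only potential obstacle is the non-degeneracy of $\Lambda + \Sigma$ at points where the rank of $\Lambda$ jumps. Taking $\Sigma$ positive-definite everywhere settles this uniformly, so it is the key choice. No integrability or regularity of $\Lambda$ enters.
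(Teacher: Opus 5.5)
Your proof is correct and follows the paper's argument: the paper also takes $\Sigma = g^{-1}$ for a Riemannian metric $g$, invokes Corollary~\ref{cor:inner_product_non_deg} for the non-degeneracy of $\Lambda + \Sigma$, and sets $B \coloneqq (\Lambda + \Sigma)^{-1}$. You also spell out the smoothness of the fibrewise inverse and the identification of $\Lambda$ as the skew-symmetric part of $B^{-1} = \Lambda + \Sigma$, which the paper leaves implicit.
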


\begin{proof} Let $g$ be a Riemannian metric on $M$ and let $\Sigma \coloneqq g^{-1}$ be the inverse symmetric contravariant tensor. Then, $\Lambda + \Sigma$ is a non-degenerate 2-contravariant tensor by Corollary \ref{cor:inner_product_non_deg}, and it is enough to take $B \coloneqq (\Lambda + \Sigma)^{-1}$.
\end{proof}

\begin{example}
It follows from the argument above that a particular subclass of metriplectic systems induces 2-covariant tensors. A metriplectic structure on a manifold $M$ consists of a Poisson bivector $\Lambda \in \mathfrak{X}^2(M)$ and a symmetric positive semidefinite contravariant tensor $\Sigma \in \Gamma(\T M \otimes \T M)$ \cite{mor1986physicad_nonlinearphenomena}. Then, if the kernels of the linear maps induced by the tensors satisfy $\Ker \Lambda \cap \Ker \Sigma = 0$, the tensor $\Lambda + \Sigma$ is invertible (see Proposition~\ref{prop:inner_product_non_deg}) and we may build $B \coloneqq (\Lambda + \Sigma)^{-1} \in \Gamma(\T^\ast M \otimes \T^\ast M)$.
\end{example}

Also, as an immediate application of Proposition \ref{prop:existence_of_equivalent_B} we obtain the following:

\begin{proposition} Let $B \in \Gamma(\T^\ast M \otimes \T^\ast M)$ be a non-degenerate 2-covariant tensor such that its skew-symmetric part $\omega$ has constant rank. Then, there exists a non-degenerate 2-covariant tensor $B' \in \Gamma(\T^\ast M \otimes \T^\ast M)$ satisfying $\T M = \Ker \omega' \oplus \Ker S'$ and that
\[
\left( \sksym{2}{\widehat B}^{-1}\right)(\omega) = \left(\sksym{2} {\widehat {B'}}^{-1} \right)(\omega')\,.
\]
\end{proposition}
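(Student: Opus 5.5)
The plan is to globalise the pointwise construction of Proposition~\ref{prop:existence_of_equivalent_B}, using constant rank to make every choice smooth. Let $\Lambda = -\big(\sksym{2}\widehat B^{-1}\big)(\omega)$. By the Remark following the description of $\widehat\Lambda$, we have $\widehat\Lambda = -\transp{\widehat B}^{-1}\circ\widehat\omega\circ\widehat B^{-1}$. This is $\widehat\omega$ composed with isomorphisms, so $\rank\Lambda = \rank\omega$ is constant. Hence $\Ker\Lambda = \widehat B(\Ker\omega) \subseteq \T^\ast M$ is a smooth vector subbundle, since it is the image of the constant-rank subbundle $\Ker\omega$ under a bundle isomorphism.

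Next I would choose the auxiliary data smoothly, following the Remark on canonical choices. Fix a Riemannian metric $g$ on $M$, which exists by paracompactness. Set $F \coloneqq (\Ker\Lambda)^{\perp, g^{-1}} \subseteq \T^\ast M$. This is a smooth subbundle complementary to $\Ker\Lambda$. Define the symmetric contravariant tensor $\Sigma\in\Gamma(\T M\otimes\T M)$ by
\[
\Sigma(\alpha,\beta) \coloneqq g^{-1}(\pr_{\Ker\Lambda}\alpha, \pr_{\Ker\Lambda}\beta),
\]
where $\pr_{\Ker\Lambda}$ is the projection along $F$. Then $\Sigma$ is smooth, $\Ker\Sigma = F$, and $\Sigma$ restricted to $\Ker\Lambda$ is positive definite. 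Pointwise we therefore have $\T^\ast_x M = \Ker\Lambda_x\oplus\Ker\Sigma_x$.

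Then I apply Proposition~\ref{prop:inner_product_non_deg}(iv) with $E = \T^\ast_xM$ and the 2-covariant tensor $\Lambda+\Sigma$ on it. It gives that $\Lambda+\Sigma$ is non-degenerate at each point, so $B' \coloneqq (\Lambda+\Sigma)^{-1}$ is a smooth non-degenerate 2-covariant tensor field. Smoothness holds because inversion is smooth on invertible bundle maps. The skew-symmetric part of $B'^{-1} = \Lambda+\Sigma$ is $\Lambda$. By the Proposition describing the skew-symmetric part of an inverse, this equals $-\big(\sksym{2}\widehat{B'}^{-1}\big)(\omega')$, which gives the claimed equality of bivectors. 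By the equivalence \eqref{eq:direct_sum_iff_dual}, the dual splitting $\T^\ast M = \Ker\Lambda\oplus\Ker\Sigma$ yields $\T M = \Ker\omega'\oplus\Ker S'$ pointwise. These kernels are vector subbundles, since they have constant rank: $\rank\Ker\omega' = \rank\Ker\Lambda$ by the kernel identity $\Ker\Lambda = \widehat{B'}(\Ker\omega')$.

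I expect the only real obstacle to be smoothness rather than algebra. Constant rank of $\omega$ must be transferred to $\Lambda$ so that $\Ker\Lambda$ is a genuine subbundle. Only then do the complement $F$ and the tensor $\Sigma$ exist globally and smoothly, and the metric handles this once the subbundle property is established. A minor detail to check is that the sign conventions used for $\Lambda$ of $B$ and of $B'$ are the same. Both come from the same Proposition, so no sign discrepancy arises.
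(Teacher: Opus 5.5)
Your proof is correct and follows the paper's route: the paper obtains this statement as an immediate application of Proposition~\ref{prop:existence_of_equivalent_B}, setting $B' = (\Lambda+\Sigma)^{-1}$ with $\Ker\Sigma$ complementary to $\Ker\Lambda$. Your use of a Riemannian metric to choose $F$ and $\Sigma$ smoothly, after transferring the constant rank of $\omega$ to $\Lambda$, supplies the smoothness details that the paper leaves implicit.
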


So, it follows from the last two propositions, that any bivector field $\Lambda$ with constant rank can be obtained as 
$\Lambda= -\big( \sksym{2} {\widehat B}^{-1}\big)(\omega)$
for some non-degenerate 2-covariant tensor field $B$ (with $\omega$ and $S$, respectively, its skew-symmetric and symmetric parts) satisfying the direct sum condition $\T M = \Ker \omega \oplus \Ker S$.

\section{Notable examples}
\label{section:Examples}

In this section, we apply the results obtained in the previous sections, mainly Theorem~\ref{th:big_theorem}, to some examples which are relevant in classical mechanics and mathematical physics.

\subsection{Doublets}
\label{sec:doublets}

In this first section we will study different examples where the 2-covariant tensor field $B$ is formed by a pair of a 1-form $\tau$ and a 2-form $\omega$, so that
$$B=\tau\otimes \tau+\omega \,.$$
These structures have been studied in detail recently in~\cite{GMR_2026}. Let us review briefly the most important definitions and results:
\begin{definition}
Let $\alpha\in \Omega^p(M)$ be a $p$-form on $M$ and $\beta \in \Omega^q(M)$. 
The \dfn{characteristic distribution} of the pair $(\alpha, \beta)$ at a point $x\in M$ is 
$$
\K_{(\alpha,\beta)_x} = \Ker \alpha_x \cap \Ker \beta_x \,.
$$
The \dfn{class} of $(\alpha, \beta)$ at~$x$ is the codimension of its characteristic distribution at~$x$.
$$
\cl(\alpha,\beta)_x = 
\codim \K_{(\alpha,\beta)_x} 
\,.
$$

\end{definition}

\begin{definition}\label{def:doublets}
    We say that a pair $(\tau,\omega)$ of a 1-form and a 2-form is a \dfn{doublet} if the class of $(\tau,\omega)$ is constant.
\end{definition}

The parity of the class of a pair $(\tau,\omega)$ is characterised by the existence of the following vector fields:
\begin{definition}
Let $(\tau,\omega)$ be a pair of a 1-form and a 2-form.
\begin{enumerate}[\rm (i)]
\item A \dfn{Reeb vector field} for the pair 
is a vector field $R$ satisfying
\begin{equation}
\label{reeb_definition}
\inn{R}\tau= 1\,, \qquad \inn{R}\omega=0\,. 
\end{equation}

\item A \dfn{Liouville vector field} for the pair 
is a vector field $\Delta$ satisfying
\begin{equation}
\label{Liouville_vf}
\inn{\Delta}\omega = \tau \,.
\end{equation}
\end{enumerate}
\end{definition}
 
\begin{theorem}
\label{thm:class_parity}
Let $(\tau,\omega)$ be a doublet. Then
$$\text{The class of the doublet is odd} \Longleftrightarrow
\Ker \omega \not\subset \Ker\tau
\Longleftrightarrow
\text{The doublet has a Reeb vector field}\,.
$$
\end{theorem}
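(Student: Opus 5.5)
This is pointwise linear algebra, so I will fix $x\in M$, write $E=\T_xM$, $\tau=\tau_x$, $\omega=\omega_x$, and let $2n$ be the rank of $\omega$. The class is $\codim(\Ker\tau\cap\Ker\omega)=\dim E-\dim(\Ker\tau\cap\Ker\omega)$. I will prove the equivalences in the order (a) class odd $\Leftrightarrow$ $\Ker\omega\not\subset\Ker\tau$, and (b) $\Ker\omega\not\subset\Ker\tau$ $\Leftrightarrow$ a Reeb vector field exists.

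For (a) I first compare $\Ker\tau\cap\Ker\omega$ with $\Ker\omega$. If $\Ker\omega\subset\Ker\tau$, the two spaces coincide, so the class equals $\codim\Ker\omega=\operatorname{rank}\omega=2n$, which is even. If $\Ker\omega\not\subset\Ker\tau$, then $\tau|_{\Ker\omega}$ is a nonzero functional. Hence $\Ker\tau\cap\Ker\omega$ has codimension one in $\Ker\omega$, and the class is $2n+1$, which is odd. This dichotomy settles the first equivalence at every point. The step that needs real care is that the rank of a skew form is even, and I will simply quote that standard fact.

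For (b), suppose first that $\Ker\omega\not\subset\Ker\tau$ holds pointwise. The constancy of the class then forces $\operatorname{rank}\omega=\text{class}-1$ to be constant, so $\Ker\omega$ is a vector subbundle. The set $\{v\in\Ker\omega:\tau(v)=1\}$ is a nonempty affine subbundle of it. I will obtain a smooth global section by choosing local sections $R_i$ of $\Ker\omega$ with $\tau(R_i)=1$, which exist by local trivialisation because $\tau|_{\Ker\omega}$ is a nowhere-vanishing section of $(\Ker\omega)^*$. I then glue them with a partition of unity $\{\rho_i\}$, setting $R=\sum\rho_iR_i$. This convex combination still satisfies $\inn R\tau=1$ and $\inn R\omega=0$. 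Paracompactness of $M$ is assumed, so the partition of unity is available. Conversely, if $R$ is a Reeb vector field, then $R_x\in\Ker\omega_x$ and $\tau(R_x)=1\neq0$, so $\Ker\omega\not\subset\Ker\tau$ at every point.

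The only genuinely delicate point is smoothness in (b): I need $\Ker\omega$ to be a smooth subbundle. This follows from the constant rank obtained from constancy of the class, and I expect it to be the main obstacle only in bookkeeping. Everything else is a dimension count.
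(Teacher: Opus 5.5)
Your proof is correct and complete. The paper does not actually prove this theorem: it recalls it, together with the companion statement for even class, from \cite{GMR_2026}. So there is no in-paper argument to compare yours against.

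Your pointwise dichotomy is the natural elementary route. Either $\Ker\tau_x\cap\Ker\omega_x$ equals $\Ker\omega_x$, or it is a hyperplane in it, so the class is $\operatorname{rank}\omega_x$ or $\operatorname{rank}\omega_x+1$. The partition-of-unity gluing is legitimate because the Reeb conditions $\inn{R}\tau=1$, $\inn{R}\omega=0$ are affine in $R$.

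One step you use only implicitly deserves to be written down. The parity of the class at $x$ is decided by whether $\Ker\omega_x\subset\Ker\tau_x$, and the class is constant. Hence $\Ker\omega\not\subset\Ker\tau$ holds at one point if and only if it holds at every point. This is what lets the middle condition of the statement be read either pointwise or globally. It is also the first place where the doublet hypothesis enters; the second is the constant rank of $\omega$, which you need for $\Ker\omega$ to be a smooth subbundle.

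An alternative formulation, closer to the volume-form conditions the paper uses later (e.g.\ for partially cosymplectic structures), is the following: the class is $2r+1$ if and only if $\omega^{r+1}=0$ and $\tau\wedge\omega^r\neq 0$. This rests on the fact that $\tau\wedge\omega^r=0$ if and only if $\tau\in(\Ker\omega)^\circ$ when $\operatorname{rank}\omega=2r$. That version gives a test expressed purely in terms of forms. Your kernel count avoids this exterior-algebra lemma and is shorter.
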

\begin{theorem}
Let $(\tau,\omega)$ be a doublet. Then
$$
\text{The class of the doublet is even}
\Longleftrightarrow
\Ker \omega \subset \Ker \tau
\Longleftrightarrow
\text{The doublet has a Liouville vector field}\,.
$$
\end{theorem}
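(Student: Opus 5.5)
The plan is to argue pointwise in linear algebra first and then handle smoothness of the Liouville vector field separately.

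Fix $x\in M$ and write $K=\Ker\omega_x$. Since $\Ker\tau_x$ is either all of $\T_xM$ or a hyperplane, only two cases can occur. If $K\subset\Ker\tau_x$, then $\K_{(\tau,\omega)_x}=K$, so $\cl(\tau,\omega)_x=\codim K=\operatorname{rank}\omega_x$, which is even because $\omega_x$ is skew-symmetric. If $K\not\subset\Ker\tau_x$, then $K\cap\Ker\tau_x$ is a hyperplane in $K$, so the class is $\operatorname{rank}\omega_x+1$, which is odd. This proves the first equivalence at each point; alternatively it follows directly from Theorem~\ref{thm:class_parity} by contraposition. Since the class of a doublet is constant, its parity is the same at every point, so the condition $\Ker\omega\subset\Ker\tau$ holds either at all points or at none.

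For the second equivalence, at each point the equation $\inn{\Delta}\omega=\tau$ has a solution iff $\tau_x\in\Ima\widehat{\omega}_x$. Because $\omega$ is skew-symmetric, $\Ima\widehat{\omega}_x=(\Ker\omega_x)^\circ$. Hence a pointwise solution exists iff $\Ker\omega_x\subset\Ker\tau_x$. This gives the implication from a Liouville vector field to $\Ker\omega\subset\Ker\tau$ immediately.

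The main point is the converse: the pointwise solutions must assemble into a smooth global vector field. Under $\Ker\omega\subset\Ker\tau$, the computation above shows that $\operatorname{rank}\omega_x$ equals the class, hence is constant. So $\widehat\omega\colon\T M\to\T^*M$ is a constant-rank bundle map, and $\Ker\omega$ and $\Ima\widehat\omega=(\Ker\omega)^\circ$ are smooth subbundles. Choose a complementary subbundle $C$ with $\T M=\Ker\omega\oplus C$, for instance the orthogonal complement with respect to an auxiliary Riemannian metric. Then $\widehat\omega|_C\colon C\to(\Ker\omega)^\circ$ is a vector bundle isomorphism. Since $\tau$ is a smooth section of $(\Ker\omega)^\circ$, set $\Delta\coloneqq(\widehat\omega|_C)^{-1}(\tau)$; this is smooth and satisfies $\inn{\Delta}\omega=\tau$.

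As an alternative route, one can solve locally in adapted frames and glue with a partition of unity. This works because the set of solutions is an affine subspace at each point, so convex combinations of local solutions are still solutions.
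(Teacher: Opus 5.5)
Your proof is correct and complete. Note that the paper gives no proof of this statement: it recalls it, together with its odd-class counterpart, from \cite{GMR_2026}, so there is no in-paper argument to compare with.

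All the steps in your argument hold.
\begin{itemize}
\item The pointwise computation shows that the class at $x$ equals $\operatorname{rank}\omega_x$ or $\operatorname{rank}\omega_x+1$, according to whether $\Ker\omega_x\subset\Ker\tau_x$ or not.
\item The identification $\Ima\widehat\omega_x=(\Ker\omega_x)^\circ$ gives pointwise solvability of $\inn{\Delta}\omega=\tau$ exactly when $\Ker\omega_x\subset\Ker\tau_x$.
\item Inverting $\widehat\omega|_C\colon C\to(\Ker\omega)^\circ$ on a complement $C$ of $\Ker\omega$ produces a smooth global $\Delta$.
\end{itemize}

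You correctly isolate where the doublet hypothesis is needed. Constancy of the class, together with $\Ker\omega\subset\Ker\tau$, forces $\omega$ to have constant rank. Without that, pointwise solutions need not assemble into a continuous field. For example, take $\omega=x^2\,\d x\wedge\d y$ and $\tau=x\,\d y$ on $\R^2$. Here $\Ker\omega\subset\Ker\tau$ everywhere, but away from $x=0$ the only solution is $\Delta=x^{-1}\partial_x$.

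The only related construction in the paper concerns maximal class. There $B=\omega+\tau\otimes\tau$ is non-degenerate, and $\Delta=\widehat B^{-1}(\tau)$ is smooth automatically. Your complement-subbundle argument is what is needed for doublets of arbitrary constant class.
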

Additionally, if we let $B=\omega + \tau \otimes \tau$, then both Reeb and Liouville vector fields are characterised as the elements of the preimage~$\widehat B^{-1}(\tau)$. That is, if the class is even then
$$\widehat B(X) = \tau \Longleftrightarrow X\text{ is a Liouville vector field}\,,$$
and if the class is odd then
$$\widehat B(X) = \tau \Longleftrightarrow X\text{ is a Reeb vector field}\,.$$

For our purposes, we need to study when the associated tensor field $B=\omega +\tau\otimes \tau$ is non-degenerate and induces a direct sum decomposition.

It turns out that $B$ is non-degenerate if, and only if, the pair $(\tau,\omega)$ has maximal class, meaning its class equals the dimension of the manifold. This is due to the equality
 $$\Ker B = \Ker \tau \cap\Ker \omega\,.$$
Note that, if the class is maximal, then, necessarily, the rank of $\omega$ is also maximal. In particular, $\cl(\tau,\omega)=\dim M = 2n$ if, and only if, $\omega$ is non-degenerate.
Also, one has
 \begin{proposition}
\label{prop:directsum}
Let $(\tau,\omega)$ be a pair of a 1-form and a 2-form. 
The following are equivalent:
\begin{enumerate}[$(i)$]
\item 
$\T^* M =(\Ker \tau)^{\circ} \oplus (\Ker \omega)^{\circ}$;
\item
$\T M = \Ker \tau \oplus \Ker \omega $;
\item 
$\cl (\tau,\omega) = \dim M$ (equivalently, $B$ is non-degenerate) and either the class is odd or $\tau=0$.
\end{enumerate}
\end{proposition}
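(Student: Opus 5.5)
The argument is pointwise linear algebra, so I fix $x\in M$ and work in $E=\T_xM$. The tools are the annihilator identity $(W_1\cap W_2)^\circ = W_1^\circ + W_2^\circ$ and the fact, established above, that $\Ker B=\Ker\tau\cap\Ker\omega$ for $B=\omega+\tau\otimes\tau$. I will prove (i)$\Leftrightarrow$(ii) and then (ii)$\Leftrightarrow$(iii).

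For (i)$\Leftrightarrow$(ii), recall that $\Ker\tau\oplus\Ker\omega=E$ means $\Ker\tau\cap\Ker\omega=0$ and $\Ker\tau+\Ker\omega=E$. Taking annihilators, these become $(\Ker\tau)^\circ+(\Ker\omega)^\circ=E^*$ and $(\Ker\tau)^\circ\cap(\Ker\omega)^\circ=0$ respectively. Hence the two direct sum decompositions are equivalent.

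For (ii)$\Rightarrow$(iii), the condition $\Ker\tau\cap\Ker\omega=0$ is exactly maximal class, equivalently non-degeneracy of $B$. If $\tau\neq0$, then $\Ker\tau$ is a hyperplane, so $\Ker\omega$ must be one-dimensional and not contained in $\Ker\tau$. Theorem~\ref{thm:class_parity} then says the class is odd. (Alternatively, $\dim M=\operatorname{rank}\omega+1$ is odd because the rank of $\omega$ is even.)

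For (iii)$\Rightarrow$(ii), suppose first that $\tau=0$. Maximal class gives $\Ker\omega=0$, and then $E=E\oplus 0=\Ker\tau\oplus\Ker\omega$. Suppose instead that the class is odd and maximal. Then $\Ker\tau\cap\Ker\omega=0$, and by Theorem~\ref{thm:class_parity} there is a Reeb vector, which lies in $\Ker\omega$ but not in $\Ker\tau$. So $\Ker\tau+\Ker\omega$ strictly contains the hyperplane $\Ker\tau$ (note $\tau\neq0$ since $\tau(R)=1$), and therefore equals $E$.

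The only point needing care is the dimension bookkeeping in the odd case, and the Reeb vector handles it directly. Since the class is assumed constant, the pointwise statements hold at every point, which gives the claims for the bundles.
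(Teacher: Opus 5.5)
Your proof is correct. The paper gives no proof of this proposition to compare against: it recalls the statement from \cite{GMR_2026}, alongside the identity $\Ker B=\Ker\tau\cap\Ker\omega$ and the parity theorems. Your pointwise argument, using annihilators for (i)$\Leftrightarrow$(ii) and the hyperplane/Reeb-vector dimension count for (ii)$\Leftrightarrow$(iii), is a sound verification built from exactly those facts.

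One wording point to tighten is your closing sentence. In (i) and (ii) the class is not assumed constant; rather, (ii) forces it to equal $\dim M$ at every point. That constancy is what turns your pointwise dichotomy ($\tau_x=0$ or the class is odd) into the global statement in (iii): if $\dim M$ is even, then $\tau_x\neq 0$ is impossible at every point, so $\tau=0$.
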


So, as the kernel of the symmetric part of $B$ coincides with $\Ker \tau$, we have:
\begin{enumerate}[\rm (i)]
    \item If $\dim M =2n+1$, then the tensor field $B=\omega + \tau\otimes \tau$ associated with a pair $(\tau,\omega)$ is non-degenerate and induces a direct sum decomposition $\T M= \Ker \tau \oplus \Ker \omega$ if, and only if, $\cl (\tau,\omega)=2n+1$.
    \item If $\dim M =2n$, then the tensor field $B=\omega + \tau\otimes \tau$ associated with a pair $(\tau,\omega)$ is non-degenerate and induces a direct sum decomposition $\T M= \Ker \tau \oplus \Ker \omega$ if, and only if, $\omega$ is non-degenerate and $\tau=0$.
\end{enumerate}

If $\dim M = 2n$ and $\omega$ is non-degenerate, we can reduce the problem to the direct sum decomposition case for an arbitrary $1$-form $\tau$. This is because the bivector field associated with the non-degenerate tensor $B = \omega + \tau \otimes \tau$ is independent of the 1-form $\tau$.
Indeed, since $\omega$ is non-degenerate when the class is even and equal to the dimension of the manifold, we can define the standard bivector associated with $\omega$ (equivalently, the bivector field associated with $B' = \omega$) as
$$\Lambda'(\alpha, \beta) = -\omega(\widehat{\omega}^{-1}(\alpha), \widehat{\omega}^{-1}(\beta)) \,,$$
for any $\alpha,\beta \in \Omega^1(M)$.
To see why the bivector associated with $B$ coincides with $\Lambda'$, we use the following identity for the inverse 
$$\widehat{B}^{-1}(\alpha) = \widehat{\omega}^{-1}(\alpha) + (\iota_\Delta \alpha)\Delta \,,$$
where $\alpha \in \Omega^1(M)$ and $\Delta \in \mathfrak{X}(M)$ denotes the unique Liouville vector field of the doublet $(\tau, \omega)$, which has class $2n$. 
We can verify that this identity holds by directly evaluating $\widehat{B}$ on the proposed inverse. Recalling that, for any $X\in \X(M)$, $\widehat{B}(X) = \widehat{\omega}(X) + \tau(X)\tau$ and that $\iota_\Delta \tau=0$, we obtain
$$\widehat{B} \left( \widehat{\omega}^{-1}(\alpha) + (\iota_\Delta \alpha)\Delta \right) = \alpha + (\iota_\Delta \alpha)\widehat{\omega}(\Delta) + (\iota_{\widehat{\omega}^{-1}(\alpha)}\tau)\tau \,.$$
Using that, by definition, $\widehat{\omega}(\Delta) = \tau$ and that $\omega$ is skew-symmetric, we have
$$\iota_{\widehat{\omega}^{-1}(\alpha)}\tau = \iota_{\widehat{\omega}^{-1}(\alpha)}\widehat{\omega}(\Delta) = -\iota_\Delta \alpha \,.$$
Substituting this into our expression yields
$$\widehat{B} \left( \widehat{\omega}^{-1}(\alpha) + (\iota_\Delta \alpha)\Delta \right)=\alpha\,.$$
Since $\widehat{B}^{-1}$ differs from $\widehat{\omega}^{-1}$ only by the purely symmetric term $(\iota_\Delta \alpha)\Delta$, their skew-symmetric parts are identical. Consequently, the associated bivector fields coincide, and the result is independent of $\tau$.
Hence, even if the dimension of the manifold is even and the class of the doublet is $2n$, we may always reduce to the case
$$\T M = \Ker \tau \oplus \Ker \omega\,, $$
by taking $\tau=0$.

With this, and the fact that 
$$ \curv_\H(X_\H,Y_\H) = \tau([X_\H,Y_\H])R = -\d\tau(X_\H,Y_\H)R \,,$$
if the doublet has maximal odd class, 
we can apply Theorem~\ref{th:big_theorem} to obtain the conditions under which a doublet defines a Poisson or Jacobi manifold:
\begin{theorem}[Poisson and Jacobi in even class]\label{th:poi_jac_even} 
Let $(\tau, \omega)$ be a doublet such that $\cl(\tau,\omega)= \dim M=2n$. Then,
\begin{enumerate}[\rm (i)]
    \item The 2-covariant tensor $B = \omega + \tau \otimes \tau$ is non-degenerate and defines a Poisson structure if, and only if, $\d \omega = 0$.
    \item The 2-covariant tensor field $B = \omega + \tau \otimes \tau$ is non-degenerate and defines a Jacobi structure if, and only if, $\d \omega = \alpha \wedge \omega$, for some closed $1$-form $\alpha$.
\end{enumerate}
\end{theorem}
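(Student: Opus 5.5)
The plan is to reduce to the case $\tau = 0$ and then read off the conditions from Theorems~\ref{th:Poisson_general} and~\ref{thm:Jacobi_general}. As observed just before the statement, class $2n = \dim M$ means $\omega$ is non-degenerate, so $B=\omega+\tau\otimes\tau$ is non-degenerate. Moreover, the identity $\widehat B^{-1}(\alpha)=\widehat\omega^{-1}(\alpha)+(\iota_\Delta\alpha)\Delta$ shows that the bivector of $B$ equals that of $B'=\omega$. So both statements concern only $B'=\omega$.

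For $B'=\omega$ we have $S'=0$, hence $\H=\Ker S'=\T M$, $\V=\Ker\omega=0$, and $\pr_\H=\Id$. The curvature $\curv_\H$ vanishes identically, since $\H=\T M$ is trivially integrable. Formula~\eqref{eq:good_formula} then reduces to $\frac12\llbracket\omega,\omega\rrbracket=\d\omega$.

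\emph{Poisson case.} By Theorem~\ref{th:Poisson_general}, condition (ii) holds automatically, and condition (i) becomes $\d\omega=0$. This proves part (i).

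\emph{Jacobi case.} We apply Theorem~\ref{thm:Jacobi_general}. Since $\V=0$, we must take $R=0$, and condition (ii) is vacuous. Condition (i) reads $\d\omega=\overline\alpha\wedge\omega$ with $\alpha=\overline\alpha$ an arbitrary $1$-form on $M$. Condition (iii) asks that $\Lie_{X_\alpha}\omega=0$, where $X_\alpha=\transp{\widehat\omega}^{-1}(\alpha)=-\widehat\omega^{-1}(\alpha)$; the requirement $[X_\alpha,\H]\subseteq\H$ is automatic.

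The remaining step, which I expect to be the only real computation, is to show that when $\d\omega=\alpha\wedge\omega$, the condition $\Lie_{X_\alpha}\omega=0$ is equivalent to $\d\alpha=0$.

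\emph{Cartan computation.} We have $\iota_{X_\alpha}\omega=-\alpha$, so
\[
\Lie_{X_\alpha}\omega=\iota_{X_\alpha}(\alpha\wedge\omega)-\d\alpha=(\iota_{X_\alpha}\alpha)\,\omega-\alpha\wedge\iota_{X_\alpha}\omega-\d\alpha .
\]
Here $\iota_{X_\alpha}\alpha=-\omega(X_\alpha,X_\alpha)=0$ and $\alpha\wedge\iota_{X_\alpha}\omega=-\alpha\wedge\alpha=0$. Hence $\Lie_{X_\alpha}\omega=-\d\alpha$, and condition (iii) is exactly that $\alpha$ is closed.

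\emph{Converse.} If $\d\omega=\alpha\wedge\omega$ with $\d\alpha=0$, then all the conditions of Theorem~\ref{thm:Jacobi_general} hold with $\overline\alpha=\alpha$ and $R=0$. This proves part (ii).

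In dimension $2n\geq 4$ one could note that $\d\alpha=0$ already follows from $\d\omega=\alpha\wedge\omega$ by the Lefschetz injectivity of $\wedge\omega$ on $2$-forms. This is not needed, since closedness is part of the hypothesis.
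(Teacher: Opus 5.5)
Your proposal is correct and follows the paper's route. You reduce to $B'=\omega$ via the Liouville-field identity for $\widehat B^{-1}$, then specialise Theorems~\ref{th:Poisson_general} and~\ref{thm:Jacobi_general} with $\H=\T M$ and $\V=0$; your identity $\Lie_{X_\alpha}\omega=-\d\alpha$ matches the computation the paper does in the locally conformally symplectic subsection.

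One correction to your closing aside, which your proof does not use: $\beta\mapsto\beta\wedge\omega$ is injective on $2$-forms only when $2n\geq 6$. In dimension $4$ it kills every primitive $2$-form, which is exactly why closedness of $\alpha$ must be imposed there.
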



\begin{theorem}[Poisson and Jacobi in odd class]
\label{thm:odd_class_brackets}
Let $(\tau, \omega)$ be a doublet such that $\cl(\tau,\omega) =\dim M = 2n+1$. If we denote by $\mathcal{H} \coloneqq \Ker \tau$ and by $R$ the unique Reeb vector field associated to the doublet, then
\begin{enumerate}[\rm (i)]
   \item The non-degenerate $2$-covariant tensor field $B = \omega + \tau \otimes \tau$ defines a Poisson structure if, and only if, the distribution $\mathcal{H}$ is integrable (this occurs precisely when $\tau \wedge \d\tau = 0$) and $\left(\sksym{3}\transp{\pr_\H}\right)(\d \omega) = 0$ (which is equivalent to $\d \omega \wedge \tau = 0$).
   
    \item The non-degenerate 2-covariant tensor field $B = \omega + \tau \otimes \tau$ defines a Jacobi structure if, and only if, there exists a function $\lambda \in \Cinfty(M)$ and a $1$-form $\overline{\alpha} \in \Gamma(\H^\ast)$ such that the following conditions hold
    \begin{enumerate}[\rm (a)]
        \item $\left(\sksym{3}\transp{\pr_\H}\right)(\d \omega) = \overline{\alpha} \wedge \left(\sksym{2}\transp{\pr_\H}\right)(\omega)$ or, equivalently, $\d \omega\wedge \tau = \overline{\alpha} \wedge \omega\wedge \tau$
        \item $\left(\sksym{2}\transp{\pr_\H}\right)(\d \tau)  = \lambda \cdot \left(\sksym{2}\transp{\pr_\H}\right)(\omega)$, or, equivalently, $\d \tau\wedge \tau = \lambda (\omega\wedge \tau)$,
        \item 
        Let $\alpha=\bar\alpha-\lambda \transp{\widehat B}(R)$, and $X_\alpha = \transp{\widehat B}^{-1}(\alpha)$, with $R$ the Reeb vector field of $(\tau,\omega)$. Then
        \[
        \left(\transp{\pr_\H}\right)(\inn{X_{\alpha}} \d \tau )=\left(\transp{\pr_\H}\right) \d \lambda \qquad \text{and} \qquad \left(\sksym{2}\transp{\pr_\H}\right)(\Lie_{X_\alpha}\omega) = 0\,,
        \]
        equivalently, $(\iota_{X_\alpha}\d\tau-\d\lambda)\wedge \tau=0$ and $(\Lie_{X_\alpha}\omega) \wedge \tau=0$.
    \end{enumerate}
    Then the Jacobi structure is given by $\Lambda = - \big( \sksym{2} {\widehat B}^{-1}\big) (\omega) $ and $E=X_\alpha = \transp{\widehat B}^{-1}(\overline{\alpha}) -\lambda R$.
\end{enumerate}
\end{theorem}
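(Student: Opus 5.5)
The plan is to specialise Theorems~\ref{th:Poisson_general} and~\ref{thm:Jacobi_general} to the tensor $B=\omega+\tau\otimes\tau$. First I check the hypotheses. Since $\cl(\tau,\omega)=\dim M=2n+1$ is odd, Proposition~\ref{prop:directsum} gives $\T M=\Ker\tau\oplus\Ker\omega$. Here $S=\tau\otimes\tau$, so $\Ker S=\Ker\tau=\H$ and $\V=\Ker\omega=\langle R\rangle$. Both are subbundles because $\tau$ is nowhere zero and $\omega$ has constant rank $2n$. In particular $B$ is non-degenerate, and the general theorems apply.

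Next I translate each abstract condition into one involving $\tau$ and $\omega$.
\begin{itemize}
\item \emph{Curvature.} As computed just before the statement, $\curv_\H(X_\H,Y_\H)=-\d\tau(X_\H,Y_\H)R$. So integrability of $\H$ means exactly $(\sksym{2}\transp{\pr_\H})(\d\tau)=0$, which is the Frobenius condition $\tau\wedge\d\tau=0$.
\item \emph{Wedging with $\tau$.} For a $k$-form $\gamma$ I use
\[
(\gamma\wedge\tau)(X_1,\dots,X_k,R)=\pm\big((\sksym{k}\transp{\pr_\H})\gamma\big)(X_1,\dots,X_k).
\]
This holds because $\tau(R)=1$ and $\tau$ vanishes on $\H$. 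Any $(k+1)$-form of the shape $\gamma\wedge\tau$ is determined by such evaluations, since it vanishes on $k+1$ horizontal vectors. Hence $(\sksym{k}\transp{\pr_\H})\gamma=0$ if and only if $\gamma\wedge\tau=0$. This gives the equivalence $\d\omega\wedge\tau=0$ in part (i), and the "equivalently'' forms of (a), (b) and the second half of (c).
\end{itemize}
With these two translations, part (i) is Theorem~\ref{th:Poisson_general} verbatim.

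For part (ii) I apply Theorem~\ref{thm:Jacobi_general}.
\begin{itemize}
\item Condition (i) there is (a).
\item Condition (ii) there asks for $R'\in\Gamma(\V)$ with $\curv_\H=\omega R'$. Since $\V$ is spanned by the Reeb field, write $R'=-\lambda R$. The curvature formula then turns (ii) into $(\sksym{2}\transp{\pr_\H})\d\tau=\lambda(\sksym{2}\transp{\pr_\H})\omega$, which is (b).
\item The $1$-form of Theorem~\ref{thm:Jacobi_general} is $\alpha=\overline\alpha+\transp{\widehat B}(R')=\overline\alpha-\lambda\transp{\widehat B}(R)$, as in (c).
\item Since $\transp{\widehat B}$ is an isomorphism, $X_\alpha=\transp{\widehat B}^{-1}(\overline\alpha)-\lambda R$. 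This gives the stated $E$.
\item The first half of condition (iii) there is the second half of (c).
\end{itemize}

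The main obstacle is the remaining condition $[X_\alpha,\H]\subseteq\H$, which I rewrite as follows. For $Y\in\Gamma(\H)$,
\[
\tau([X_\alpha,Y])=X_\alpha(\tau(Y))-Y(\tau(X_\alpha))-\d\tau(X_\alpha,Y)=-Y(\tau(X_\alpha))-\d\tau(X_\alpha,Y).
\]
Next I compute $\tau(X_\alpha)$. From $\transp{\widehat B}(R)=\tau$ (because $\inn R\omega=0$ and $\tau(R)=1$) we get $\tau(X_\alpha)=\tau(\transp{\widehat B}^{-1}(\overline\alpha))-\lambda$. For the first term, pair $\overline\alpha=\transp{\widehat B}(Z)=-\inn Z\omega+\tau(Z)\tau$ with $R$: since $\overline\alpha$ is of type $(1,0)$, $0=\overline\alpha(R)=\tau(Z)$. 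Hence $\tau(X_\alpha)=-\lambda$. The displayed expression then becomes $Y(\lambda)-\d\tau(X_\alpha,Y)$. Requiring it to vanish for all horizontal $Y$ is exactly $\transp{\pr_\H}(\inn{X_\alpha}\d\tau)=\transp{\pr_\H}\d\lambda$, and by the wedge criterion this is $(\inn{X_\alpha}\d\tau-\d\lambda)\wedge\tau=0$. The steps to double-check are the sign and the vanishing of the $\tau$-component of $X_\alpha$ against the conventions for $\transp{\widehat B}$.
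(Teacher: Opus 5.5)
Your proposal is correct and takes the same route as the paper: specialise Theorems~\ref{th:Poisson_general} and~\ref{thm:Jacobi_general} to $B=\omega+\tau\otimes\tau$, using $\curv_\H(X_\H,Y_\H)=-\d\tau(X_\H,Y_\H)R$ and the fact that a form vanishes on $\Ker\tau$ if, and only if, its wedge with $\tau$ vanishes. Your explicit translation of $[X_\alpha,\H]\subseteq\H$ into the first condition in (c), via $\transp{\widehat B}(R)=\tau$ and $\tau(X_\alpha)=-\lambda$, is a step the paper leaves implicit, and your signs check out.
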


These last two theorems are just a particular case of Theorem~\ref{th:Poisson_general} and \ref{thm:Jacobi_general}, when applied to doublets. We also used the fact that a form $\gamma$ vanishes on $\Ker \tau$ if, and only if, it is divisible by $\tau$, i.e.\ it can be written as $\gamma=\beta\wedge \tau$ (or equivalently, $\gamma \wedge \tau = 0$).

Now, let us use the results from the last sections to recover all the usual brackets from different classical geometric structures. 

\subsubsection{Symplectic structures}

A \dfn{symplectic form} on a manifold $M$ is a 2-form $\omega\in \Omega^2(M)$ such that
\begin{enumerate}[\rm (i)]
\item 
it is closed:
$\d \omega = 0$, and
\item 
it is non-degenerate.
\end{enumerate}
It follows from the non-degeneracy condition that $\omega$ defines a non-degenerate 2-covariant tensor field $B=\omega$. As $\Ker \omega = {0}$ and the symmetric part is $S=0$ (so its kernel is the whole space) we have a direct sum decomposition of $\T M= \Ker \omega \oplus \Ker S$ and we can apply Equation~\eqref{eq:coord_formula} to compute the conjugate Schouten--Nijenhuis bracket associated with $\omega$, as
$$\frac{1}{2}\llbracket \omega, \omega\rrbracket=\d \omega = 0\,,$$
so, indeed, the bivector field $
\Lambda =- \big( \sksym{2} {\widehat B}^{-1}\big) (\omega)
$ associated with $\omega$ is a Poisson bivector field.

Furthermore, it follows from Theorem~\ref{th:poi_jac_even} that a pair $(\tau,\omega)$ with $\omega$ non-degenerate defines a Poisson structure if, and only if, $\omega$ is symplectic (i.e.\ it is also closed). This is because if $\omega$ is non-degenerate then, necessarily, $\dim M= 2n$ and $\cl(\tau,\omega)=2n$ for any $\tau\in \Omega^1(M)$.

\subsubsection{Locally conformally symplectic structures}

A $2$-form $\omega$ is said to be \dfn{conformally symplectic} 
if there exists a function $f\in \Cinfty(M)$ such that 
$\mathrm{e}^f \omega$
is a symplectic form.
In other terms:
\begin{enumerate}[\rm (i)]
\item 
$\d (e^f \omega) = 0$ and
\item 
$\omega$ is non-degenerate.
\end{enumerate}
A 2-form $\omega$ is \dfn{locally conformally symplectic}
if this conformal factor can be found locally around each point.
Equivalently (see~\cite{CG_2025}), $\omega$ is locally conformally symplectic if, and only if, there exists a \emph{closed} $1$-form $\theta$ 
(the so-called \dfn{Lee form}) 
such that
$$
\d \omega = - \theta \wedge \omega\,.
$$
The 2-form is (globally) conformally symplectic if, moreover, $\theta$ is exact.

Again, the non-degeneracy condition, implies that $B= \omega$ is a non-degenerate 2-covariant tensor field such that $\T M = \Ker \omega \oplus \Ker S = \Ker S$. Applying Formula~\eqref{eq:coord_formula} yields
$$\frac{1}{2}\llbracket \omega, \omega \rrbracket = \d \omega = -\theta \wedge \omega\,,$$
and, using Equation~\eqref{eq:lemma_horizontal}, we have
$$\llbracket \omega, -\theta \rrbracket = \Lie_{\Delta}\omega=\iota_{\Delta}\d \omega + \d (\iota_\Delta \omega) =-\iota_\Delta(\theta\wedge \omega) +\d \theta=0\,,$$
where $\Delta = \transp{\widehat \omega}^{-1}(-\theta)=-\widehat \omega^{-1}(-\theta)$ is the Liouville vector field of the doublet $(\theta,\omega)$, so it satisfies $\iota_\Delta\omega= \theta$ and $\iota_\Delta \theta = 0$.
Thus, we have shown that every locally conformally symplectic manifold defines a Jacobi bracket, with $\Lambda =- \big( \sksym{2}{\widehat B}^{-1}\big) (\omega)$ and associated vector field $E=\transp{\widehat \omega}^{-1}(-\theta)=\Delta$.

Note that it follows from Theorem~\ref{th:poi_jac_even} that a non-degenerate $2$-form $\omega$ defines a Jacobi structure if, and only if, it is locally conformally symplectic.


\subsubsection{Contact structures}


A \dfn{contact form} on a $(2n+1)$-dimensional manifold $M$ is a 1-form $\eta$ such that 
\begin{enumerate}[\rm (i)]
    \item $\eta \wedge (\d\eta)^n$ is a volume form on $M$, i.e.\ it is nowhere vanishing.
\end{enumerate}
This is equivalent to the pair $(\eta, \d\eta)$ being a doublet of class $2n+1$ and to the 2-covariant tensor field
$B = \d \eta + \eta \otimes \eta\,,$
being non-degenerate. With such a structure, the distributions $\H = \Ker \eta$ and $\V = \Ker \d \eta$ satisfy $\T M = \H \oplus \V$, and so we can apply Equation~\eqref{eq:coord_formula} to obtain
$$\frac{1}{2}\llbracket\d\eta ,\d\eta\rrbracket = +\left(\sksym{3}\transp{\pr_\H}\right)(\d(\d\eta))-\left(\sksym{2}\transp{\pr_\H}\right)(\d \eta) \wedge \eta = -\d\eta \wedge \eta=-\eta \wedge \d \eta\,.$$
Also, from $\transp{\widehat B}^{-1}(-\eta)=-R$, with $R$ the Reeb vector field of the contact structure, and the fact that $\Lie_R \d \eta =\Lie_R \eta=0$ it follows from \eqref{eq:lemma_horizontal} that
$$\llbracket\d\eta,-\eta\rrbracket= 0\,.$$
So every contact structure defines a Jacobi structure, with $\Lambda =- \big( \sksym{2} {\widehat B}^{-1}\big) (\d \eta )$ and $E = \transp{\widehat B}^{-1}(-\eta )= -R$.

\subsubsection{Cosymplectic and partially cosymplectic structures}

A pair $(\tau,\omega)$ of a 1-form and a 2-form on a $(2n+1)$-dimensional manifold $M$ is called a \dfn{partially cosymplectic structure}~\cite{lb2025j.phys.a_math.theor.} if
\begin{enumerate}[\rm (i)]
    \item the 2-form is closed: $\d\omega = 0$, and
    \item $\tau\wedge \omega^n\neq 0 $ is a volume form on $M$, i.e.\ it is nowhere vanishing.
\end{enumerate}
If, additionally, $\d \tau=0$ (so both forms are closed), then  $(\tau,\omega)$ is a \dfn{cosymplectic structure}.
The volume form condition is equivalent (see~\cite{GMR_2026}) to the class of $(\tau,\omega)$ being equal to $2n+1$ or, also, to the 2-covariant tensor field $B=\omega+ \tau \otimes \tau$ being non-degenerate. By Proposition~\ref{prop:directsum}, the tensor field $B$ induces a direct sum decomposition $\T M = \H \oplus \V$, with $\H = \Ker \tau $ and $\V = \Ker \omega$. 

If, for a partially cosymplectic structure, we apply Equation~\eqref{eq:coord_formula}, and the fact that $\d \omega = 0$, then we obtain
$$\frac{1}{2}\llbracket\omega,\omega \rrbracket = \left(\sksym{3}\transp{\pr_\H}\right)(\d \omega) - \left(\sksym{2}\transp{\pr_\H}\right)(\d \tau) \wedge \tau=-\left(\sksym{2}\transp{\pr_\H}\right)(\d \tau) \wedge \tau=-\d\tau \wedge \tau\,.$$
Thus, for a cosymplectic structure, as $\d\tau =0$, the associated bivector field $\Lambda = - \big( \sksym{2} {\widehat B}^{-1}\big) (\omega)$ is Poisson. If, on the other hand, the structure is just partially cosymplectic, then we need that
$$-\d \tau \wedge \tau = \alpha \wedge \omega\,,$$
for some 1-form $\alpha$.
The remaining condition would be
$$\llbracket \omega, \alpha\rrbracket = 0\,,$$
which will be investigated thoroughly in Section \ref{subsection:order_p}.

\subsection{Cocontact structures}

A cocontact manifold~\cite{LGGMR_23} is a triple $(M, \eta, \tau)$, where $M$ is a $(2n +2)$-dimensional manifold, $\tau$ is a closed $1$-form $\tau \in \Omega^1(M)$ and $\eta$ is a $1$-form satisfying $(\d \eta)^n \wedge \eta \wedge \tau \neq 0$.
In this case, the 2-covariant tensor field
$$B = \d\eta + \eta \otimes\eta + \tau\otimes\tau\,,$$
is non-degenerate and it induces a direct sum decomposition
$\T M = \H \oplus \V$, with $\V =\Ker \d \eta$ and $\H = \Ker S= \Ker \eta\cap \Ker \tau $. If we apply Equation~\eqref{eq:coord_formula} we have
$$\frac{1}{2}\llbracket\d\eta ,\d\eta \rrbracket=\left(\sksym{3}\transp{\pr_\H}\right)(\d(\d\eta)) -\left(\sksym{2}\transp{\pr_\H}\right)(\d\tau) \wedge \tau -\left(\sksym{2}\transp{\pr_\H}\right)(\d\eta) \wedge \eta = -\eta \wedge \d\eta\,.$$
And, similarly to the contact case, we have $\transp{\widehat B}^{-1}(-\eta)=-R$, with $R$ the Reeb vector field associated with $\eta$, which satisfies $\iota_R \d\eta =\iota_R \tau = 0$ and $\iota_R\eta = 1$. So  $\Lie_R \d \eta =\Lie_R \eta=\Lie_R\tau=0$, and therefore
$$\llbracket\d \eta , -\eta \rrbracket= 0\,.$$
Thus, any cocontact structure defines a Jacobi bracket with $\Lambda = - \big( \sksym{2} {\widehat B}^{-1}\big) (\d\eta) $ and $E= -R$.

\subsection{Almost cosymplectic structures of order \texorpdfstring{$p$}{}}
\label{subsection:order_p}

An almost cosymplectic structure of order $p$ \cite{lb2025j.phys.a_math.theor.} on a $(2n + p)$-dimensional manifold $M$ is a tuple $(\omega, \tau_1, \dots, \tau_p)$, where $\omega \in \Omega^2(M)$ and $\tau_1, \dots, \tau_p \in \Omega^1(M)$ satisfy the non-degeneracy conditions:
\[
\omega^n \wedge \tau_1 \wedge \cdots \wedge \tau_p \neq 0\,, \qquad \text{and} \qquad  \omega^{n+1}=0\,.
\]
In this scenario we can consider the non-degenerate 2-covariant tensor
\[
B = \omega + \sum_{k = 1}^p \tau_k \otimes \tau_k\,.
\]
The conditions on the differential forms impose that $\Ker \omega$ has dimension $p$, that $\Ker S = \cap_k\Ker \tau_k$ has dimension $2n$, and that these two distributions are in direct sum.
Hence, we can apply the theory developed in this paper to characterise under which conditions the bivector field 
\[
\Lambda(\alpha, \beta) = - \omega(\widehat{B}^{-1}(\alpha), \widehat{B}^{-1}(\beta))
\]
defines a Poisson or Jacobi structure.
Recall (see Remark~\ref{rk:formula_S_coordinates}) that we have 
\[
\curv_\H(H_1, H_2) = -\sum_k (\d \tau_k)(H_1, H_2) R_k\,,
\]
where $R_1, \dots, R_p$ denote the unique vector fields satisfying $\inn{R_k} \omega = 0$ and $\tau_j(R_k) = \delta_{jk}$. Then, as a straightforward consequence of the previous study we obtain:

\begin{proposition} The bivector field $\Lambda = -\big( \sksym{2} {\widehat B}^{-1}\big)(\omega)$ associated with the non-degenerate 2-covariant tensor field $B = \omega + \sum_{k=1}^p \tau_k \otimes \tau_k$ of an almost cosymplectic structure $(\omega, \tau_1,\dots, \tau_p)$ of order $p$ defines a Poisson structure if, and only if, 
\begin{enumerate}[\rm (i)]
    \item $\left(\sksym{3}\transp{\pr_\H}\right)( \d \omega) = 0$.
    \item For every $k$, we have $\left(\sksym{2}\transp{\pr_\H}\right)(\d \tau_k) = 0 $.
\end{enumerate}
\end{proposition}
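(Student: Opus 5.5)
The plan is to apply Theorem~\ref{th:Poisson_general}, so the argument reduces to checking its hypotheses and rewriting condition (ii) as the vanishing of the projected $\d\tau_k$.

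First, verify the hypotheses. With $S=\sum_k \tau_k\otimes\tau_k$ we have $\H=\Ker S=\bigcap_k \Ker\tau_k$. This holds because $S(X,X)=\sum_k\tau_k(X)^2$, so $S$ is positive semidefinite and $\Ker S$ is the common kernel of the $\tau_k$. Set $\V=\Ker\omega$. The non-degeneracy conditions $\omega^n\wedge\tau_1\wedge\cdots\wedge\tau_p\neq0$ and $\omega^{n+1}=0$ give the following.
\begin{enumerate}[\rm (a)]
\item $\omega$ has constant rank $2n$, so $\V$ is a rank-$p$ subbundle.
\item The $\tau_k$ are pointwise linearly independent, so $\H$ is a rank-$2n$ subbundle.
\item $\omega|_\H$ is non-degenerate, since $\omega^n$ does not vanish on $\H$.
\end{enumerate}
From (c), $\H\cap\V=0$, and counting ranks gives $\T M=\H\oplus\V$, as the paper already states. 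Theorem~\ref{th:Poisson_general} then says $\Lambda$ is Poisson if and only if $(\sksym{3}\transp{\pr_\H})(\d\omega)=0$ and $\H$ is integrable. The first of these is exactly condition (i).

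Next, show that integrability of $\H$ is equivalent to (ii). I will use the curvature expression recalled from Remark~\ref{rk:formula_S_coordinates}:
\[
\curv_\H(H_1,H_2)=-\sum_k(\d\tau_k)(H_1,H_2)R_k ,
\]
where $R_k$ is the basis of $\V$ dual to $\tau_k|_\V$, so that $\tau_j(R_k)=\delta_{jk}$ and $\inn{R_k}\omega=0$. Alternatively, one can derive this directly from $\tau_k([H_1,H_2])=-\d\tau_k(H_1,H_2)$ for horizontal $H_1,H_2$. The $R_k$ are pointwise linearly independent, so $\curv_\H$ vanishes if and only if $\d\tau_k(H_1,H_2)=0$ for all $k$ and all horizontal $H_1,H_2$. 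That is precisely $(\sksym{2}\transp{\pr_\H})(\d\tau_k)=0$ for every $k$. Since $\curv_\H=0$ is equivalent to integrability of $\H$, we get (ii).

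The only point needing care is the existence of the $R_k$ and the identification of $\curv_\H$ through them. The $\tau_k$ restrict to a basis of $\V^\ast$ because $\tau_k|_\V$ are independent: a vector in $\V$ annihilated by all $\tau_k$ lies in $\H\cap\V=0$. With that in place, the rest is a direct substitution into Theorem~\ref{th:Poisson_general}.
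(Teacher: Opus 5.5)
Your proposal is correct and follows the paper's own route. The paper also derives this proposition directly from Theorem~\ref{th:Poisson_general} together with the curvature expression $\curv_\H(H_1,H_2)=-\sum_k(\d\tau_k)(H_1,H_2)R_k$ from Remark~\ref{rk:formula_S_coordinates}. Your check that $\T M=\H\oplus\V$ follows from the non-degeneracy conditions fills in a detail that the paper only asserts.
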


\begin{proposition} The bivector field $\Lambda = -\big( \sksym{2} {\widehat B}^{-1}\big)(\omega)$ associated with the non-degenerate 2-covariant tensor field $B = \omega + \sum_{k=1}^p \tau_k \otimes \tau_k$ of an almost cosymplectic structure $(\omega, \tau_1,\dots, \tau_p)$ of order $p$ can be completed to a Jacobi structure if, and only if, there exists a horizontal $1$-form $\overline{\alpha} \in \Gamma(\H^*)$ and a set of functions $\lambda_k \in \Cinfty(M)$ such that:
\begin{enumerate}[\rm (i)]
    \item $\left(\sksym{3}\transp{\pr_\H}\right)(\d \omega) = \overline{\alpha} \wedge \left(\sksym{2}\transp{\pr_\H}\right)(\omega)$;
    \item the functions $\lambda_k$ satisfy $\left(\sksym{2}\transp{\pr_\H}\right)(\d \tau_k)= \lambda_k \cdot \left(\sksym{2}\transp{\pr_\H}\right)(\omega)$;
    \item let $X_\alpha = X_{\overline{\alpha}} - \sum_{k = 1}^{p} \lambda_k R_k$ with $X_{\overline{\alpha}} = \transp{\widehat B}^{-1}(\overline{\alpha})$ and $R_k=\widehat B^{-1}(\tau_k)$, then for every $k = 1, \dots, p$:
    \[
    \left(\transp{\pr_\H}\right)(\inn{X_{\alpha}} \d \tau_k )= \left(\transp{\pr_\H}\right)(\d \lambda_k) \qquad \text{and} \qquad \left(\sksym{2}\transp{\pr_\H}\right)(\Lie_{X_\alpha}\omega) = 0\,,
    \]
    or, equivalently, $(\inn{X_\alpha}\d\tau_k - \d\lambda_k)\wedge \tau_1 \wedge \cdots \wedge \tau_p = 0$ and $(\Lie_{X_\alpha} \omega)\wedge \tau_1 \wedge \cdots \wedge \tau_p = 0$, respectively.
\end{enumerate}
Under these conditions, the Jacobi structure is given by $\Lambda = - \big( \sksym{2} {\widehat B}^{-1}\big) (\omega) $ and $E= X_\alpha$.
\end{proposition}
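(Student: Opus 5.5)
The plan is to specialise Theorem~\ref{thm:Jacobi_general} to $B=\omega+\sum_k\tau_k\otimes\tau_k$, exactly as was done for doublets in Theorem~\ref{thm:odd_class_brackets}. The hypotheses give $\T M=\H\oplus\V$ with $\H=\bigcap_k\Ker\tau_k$ and $\V=\Ker\omega=\langle R_1,\dots,R_p\rangle$, where the $R_k$ are characterised by $\inn{R_k}\omega=0$ and $\tau_j(R_k)=\delta_{jk}$. Since $S|_\V$ is the identity in this frame, $\widehat B(R_k)=\transp{\widehat B}(R_k)=\tau_k$; in particular $R_k=\widehat B^{-1}(\tau_k)$. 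Condition (i) of Theorem~\ref{thm:Jacobi_general} is then literally condition (i) of the statement.

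For condition (ii), write $R=-\sum_k\lambda_kR_k\in\Gamma(\V)$ for arbitrary functions $\lambda_k$. By Remark~\ref{rk:formula_S_coordinates}, $\curv_\H(H_1,H_2)=-\sum_k\d\tau_k(H_1,H_2)R_k$. Comparing $R_k$-components, the identity $\curv_\H=\omega\,R$ on $\H\otimes\H$ is equivalent to $\d\tau_k(H_1,H_2)=\lambda_k\,\omega(H_1,H_2)$ for all $k$, which is condition (ii). Then $\alpha=\overline\alpha+\transp{\widehat B}(R)=\overline\alpha-\sum_k\lambda_k\tau_k$, and $X_\alpha=X_{\overline\alpha}-\sum_k\lambda_kR_k$, which is the vector field $E$ in the statement.

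For condition (iii), the requirement $(\sksym{2}\transp{\pr_\H})(\Lie_{X_\alpha}\omega)=0$ carries over verbatim. It remains to translate $[X_\alpha,\H]\subseteq\H$, i.e.\ $\tau_k([X_\alpha,H])=0$ for all horizontal $H$ and all $k$. Using $\tau_k(H)=0$,
\[
\tau_k([X_\alpha,H])=-\d\tau_k(X_\alpha,H)+X_\alpha(\tau_k(H))-H(\tau_k(X_\alpha))=-(\inn{X_\alpha}\d\tau_k)(H)-H(\tau_k(X_\alpha)).
\]
Moreover $\tau_k(X_\alpha)=\tau_k(X_{\overline\alpha})-\lambda_k$. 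Here $\tau_k(X_{\overline\alpha})=\langle\transp{\widehat B}(R_k),X_{\overline\alpha}\rangle=\langle\widehat B(X_{\overline\alpha}),R_k\rangle=\overline\alpha(R_k)$ vanishes, because $\overline\alpha$ is of type $(1,0)$ (the same computation as for $\alpha(Z_\V)$ before Theorem~\ref{thm:Jacobi_general}). Hence $\tau_k(X_\alpha)=-\lambda_k$, and the condition becomes $(\inn{X_\alpha}\d\tau_k)(H)=\d\lambda_k(H)$, i.e.\ $\transp{\pr_\H}(\inn{X_\alpha}\d\tau_k)=\transp{\pr_\H}(\d\lambda_k)$.

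Finally, the wedge reformulations follow because a form vanishes on $\H=\bigcap_k\Ker\tau_k$ if and only if its wedge with $\tau_1\wedge\cdots\wedge\tau_p$ vanishes, since the $\tau_k$ are pointwise linearly independent. The main obstacle is bookkeeping: keeping the sign conventions of $\curv_\H$ and $R$ consistent, and checking that $\overline\alpha$ does not contribute to $\tau_k(X_\alpha)$. The rest is a direct substitution into Theorem~\ref{thm:Jacobi_general}.
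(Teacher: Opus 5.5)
Your proposal is correct and follows the paper's route: the paper presents this proposition as a straightforward consequence of Theorem~\ref{thm:Jacobi_general}, as in the doublet case, with $R=-\sum_k\lambda_kR_k$ and $\alpha=\overline\alpha-\sum_k\lambda_k\tau_k$, and it leaves the details implicit. You fill those details in correctly: $\tau_k(X_\alpha)=-\lambda_k$ holds because $X_{\overline\alpha}$ is horizontal (equivalently, $B(X_{\overline\alpha},R_k)=B(R_k,X_{\overline\alpha})$ since $R_k\in\Ker\omega$), and this turns $[X_\alpha,\mathcal{H}]\subseteq\mathcal{H}$ into the stated condition relating $\inn{X_\alpha}\d\tau_k$ and $\d\lambda_k$ on $\mathcal{H}$.
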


Note that the cocontact structures presented in the previous section are a particular case of almost cosymplectic structures of order 2, and contact structures are of order 1.

\subsection{Fat bundles}

This section establishes a precise relationship between the formula
\begin{equation}\label{eq:H_V_equation}
    \frac{1}{2} \llbracket \omega, \omega\rrbracket = \left(\sksym{3}\transp{\pr_\H}\right)(\d \omega) +\frac12 \,\widehat S \barwedge \curv_{\H}\,,
\end{equation}
and the theory of principal bundles, specifically within the context of \emph{fat bundles} \cite{wei1980advancesinmathematics}.

\begin{definition} Let $A \subset \mathfrak{g}^\ast$ be an $\mathrm{Ad}^\ast$-invariant subset. An \dfn{$A$-fat bundle} is a principal bundle $\pi \colon P \rightarrow M$ together with a principal connection $\alpha$ satisfying that $\mu \circ \omega_\alpha$ is non-degenerate on $\H = \Ker \alpha$, for every $\mu \in A$, where $\omega_\alpha = \left(\sksym{2}\transp{\pr_\H}\right)(\d \alpha)$ denotes the curvature of the connection. Equivalently, for every $\mu \in A$ and $p \in P$, the $2$-form $\mu \circ \omega_\alpha$ defines a linear symplectic form on the horizontal space $\H_p$.
\end{definition}

It should be clear that Equation~\eqref{eq:H_V_equation} simplifies greatly when $\omega = \mu \circ \omega_\alpha$ for some $A$-fat connection $\alpha$ and $\mu \in A$. Indeed, let $g$ be an (in principle, arbitrary) metric along the vertical distribution on $P$, namely $\V = \Ker \T\pi$. Then, for an $A$-fat connection, we have that 
\[
B_\mu \coloneqq \mu \circ \omega_\alpha + g
\]
is non-degenerate, for every $\mu \in A$. In fact, the decomposition it induces coincides with the decomposition defined by the principal connection:
\[
\T P = \Ker (\mu \circ \omega_\alpha) \oplus \Ker g = \V \oplus \Ker \alpha\,.
\]
Hence, we can ask whether $B_\mu$ induces a Jacobi (or Poisson) structure. Now, in light of the Bianchi identity, namely $\D_\alpha \D_\alpha  \alpha = 0$, where $\D_\alpha$ denotes the covariant derivative, we obtain that 
\[
\left(\sksym{3}\transp{\pr_\H}\right)(\d (\mu \circ \omega_\alpha)) = \D_\alpha (\mu \circ \omega_\alpha) = \mu \circ \D_\alpha \D_\alpha \alpha =  0
\]
and so Theorem \ref{th:big_theorem} implies that
$$
\frac{1}{2} \llbracket\mu \circ \omega_\alpha, \mu \circ \omega_\alpha \rrbracket = 
\frac12 \,\widehat g \barwedge \curv_{\H}\,.$$ Using the fact that $\omega_\alpha$ is, up to a sign, $\curv_\H$, we conclude the following:

\begin{proposition} Let $\pi \colon P \rightarrow M$ be an $A$-fat bundle, where $A \subset \mathfrak{g}^\ast$, together with a connection~$\alpha$. Then, for every $\mu \in A$, we have that $(\H, \mu \circ \omega_\alpha)$ is a symplectic vector bundle. Let $\Lambda_\mu$ denote the associated bivector field, extended to $\T P$  by employing the decomposition $\T P =\H \oplus \V$. We have the following:
\begin{enumerate}[\rm (i)]
    \item The bivector field $\Lambda_\mu$ never defines a Poisson structure on $P$.
    \item The bivector field $\Lambda_\mu$ can be completed to a Jacobi structure if, and only if, $\omega_\alpha$ (or $\curv_\H$) has a global image of dimension $1$. If this is the case, the Jacobi structure is independent of the choice of $\mu \in A$, up to a real scalar.
\end{enumerate}
\end{proposition}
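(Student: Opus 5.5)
We apply Theorems~\ref{th:Poisson_general} and~\ref{thm:Jacobi_general} to $B_\mu = \mu\circ\omega_\alpha + g$, with $\H=\Ker\alpha=\Ker g$ and $\V=\Ker T\pi=\Ker(\mu\circ\omega_\alpha)$. That $(\H,\mu\circ\omega_\alpha)$ is a symplectic vector bundle is just the fatness hypothesis. The Bianchi identity already gives $(\sksym{3}\transp{\pr_\H})(\d(\mu\circ\omega_\alpha))=0$, so the obstruction reduces to the curvature term. We use that for horizontal $H_1,H_2$,
\[
\curv_\H(H_1,H_2)=\pr_\V[H_1,H_2]=-\bigl(\omega_\alpha(H_1,H_2)\bigr)^\sharp ,
\]
where $\xi^\sharp$ denotes the fundamental vector field of $\xi\in\mathfrak g$. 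This holds because $\alpha([H_1,H_2])=-\d\alpha(H_1,H_2)$ and the vertical bundle is trivialised by fundamental fields.

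For (i), Theorem~\ref{th:Poisson_general} says $\Lambda_\mu$ is Poisson iff $\H$ is integrable, i.e.\ iff $\omega_\alpha=0$. Fatness forbids this: if $\mu\in A$ (so $A$ is nonempty, and we assume $\dim\H>0$), then $\mu\circ\omega_\alpha$ is non-degenerate on $\H$, hence nonzero. So $\Lambda_\mu$ is never Poisson.

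For (ii), we check the three conditions of Theorem~\ref{thm:Jacobi_general}. Condition (i) holds with $\overline\alpha=0$, by Bianchi. Condition (ii) requires a vertical $R$ with
\[
-\omega_\alpha(H_1,H_2)^\sharp = \mu(\omega_\alpha(H_1,H_2))\,R .
\]
Since $\mu\circ\omega_\alpha$ is non-degenerate, the right-hand side spans the line $\R R$ pointwise. So a necessary condition is that the image of $\omega_\alpha$ in $\mathfrak g$ be one-dimensional at each point. Conversely, if the image is spanned by $\xi_p$, write $\omega_\alpha=\theta\,\xi$ with $\theta$ a horizontal 2-form. Then $\mu(\xi)\neq0$ and $R=-\xi^\sharp/\mu(\xi)$.

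The main step I expect to be delicate is condition (iii), which requires $[X_\alpha,\H]\subseteq\H$ and $(\sksym{2}\transp{\pr_\H})(\Lie_{X_\alpha}\omega)=0$ for $X_\alpha=R$ (since $\overline\alpha=0$). For the first, a vertical field $\xi^\sharp$ preserves $\H$ exactly when the function $\xi\colon P\to\mathfrak g$ is constant along $\H$, since $[\eta^\sharp,\H]\subseteq\H$ for constant $\eta$ by invariance of the connection. I would show that condition (iii) forces the line $\R\xi$ to be constant on each connected component, which is the meaning of "global image of dimension $1$". Conversely, if the image is a fixed line $\R\xi_0$, equivariance $\omega_\alpha\circ R_g^*=\mathrm{Ad}_{g^{-1}}\omega_\alpha$ forces $\mathrm{Ad}$-invariance of this line. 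Then $\mu(\xi_0)$ is locally constant along fibres, using the $\mathrm{Ad}^*$-invariance of $A$ where needed, and $R$ is a constant multiple of $\xi_0^\sharp$. Both parts of (iii) then follow from $\Lie_{\xi_0^\sharp}\omega_\alpha=-\mathrm{ad}_{\xi_0}\omega_\alpha$ and $[\xi_0,\xi_0]=0$ on the image.

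Finally, for independence of $\mu$: once $\omega_\alpha=\theta\,\xi_0$, we have $\mu\circ\omega_\alpha=\mu(\xi_0)\theta$ and $E=R=-\xi_0^\sharp/\mu(\xi_0)$. Rescaling $\omega$ by the constant $c=\mu(\xi_0)$ rescales $\Lambda_\mu$ by $1/c$ and $E$ by $1/c$. So the Jacobi pair $(\Lambda_\mu,E)$ changes only by the real scalar $1/\mu(\xi_0)$.
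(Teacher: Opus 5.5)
Your arguments for (i), for the sufficiency half of (ii), and for the rescaling in $\mu$ are the paper's. In the sufficiency half, $E=-\xi_0^\sharp/\mu(\xi_0)$ is a fundamental field and so preserves $\H$, and $\Lie_{\xi_0^\sharp}\omega_\alpha=-\mathrm{ad}_{\xi_0}\circ\omega_\alpha$ vanishes because the image is a line.

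The gap is the step you flag as delicate. You plan to show that condition (iii) of Theorem~\ref{thm:Jacobi_general} forces the line $\R\xi$ to be constant, and that step fails. Write $\omega_\alpha=\omega\,\zeta$ with $\omega=\mu\circ\omega_\alpha$ and $\zeta\colon P\to\mathfrak g$ normalised by $\mu(\zeta)=1$, so that $R=-\zeta^\sharp$. Condition (iii) then has two halves:
\begin{itemize}
\item $\Lie_R\omega=\iota_R\d\omega$ equals, at each $p$ and up to sign, $\mu([\zeta(p),\zeta(p)])\,\omega_p=0$, so the second half of (iii) holds automatically.
\item Since $[\eta^\sharp,\H]\subseteq\H$ for constant $\eta$, the vertical part of $[R,H]$ is $(H\zeta)^\sharp$. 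So the first half says exactly that $\zeta$ is constant along $\H$.
\end{itemize}
Nothing constrains $\zeta$ in the other vertical directions. There equivariance makes the line rotate, $\R\zeta(pg)=\mathrm{Ad}_{g^{-1}}\R\zeta(p)$.

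Here is a concrete counterexample. Take a $U(1)$-bundle $Q\to M$ with a connection whose curvature $F$ is symplectic (e.g.\ the Hopf bundle). Let $P=Q\times_{U(1)}U(2)$ via $z\mapsto\mathrm{diag}(z,1)$, with the induced connection, and let $\mu=-i\,\mathrm{tr}$, which is $\mathrm{Ad}^*$-invariant.
\begin{itemize}
\item At $[q,g]$ one has $\omega_\alpha=\pi^*F\otimes\mathrm{Ad}_{g^{-1}}\xi_0$ with $\xi_0=\mathrm{diag}(i,0)$. Hence $\mu\circ\omega_\alpha=\pi^*F$, and $P$ is $\{\mu\}$-fat.
\item Take $\overline\alpha=0$ and let $R$ generate $[q,g]\mapsto[qz,g]$. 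This flow preserves $\H$ and the basic form $\pi^*F$, so all conditions of Theorem~\ref{thm:Jacobi_general} hold.
\item Yet the lines $\R\,\mathrm{Ad}_{g^{-1}}\xi_0$ span all of $\mathfrak u(2)$.
\end{itemize}
So if ``global image'' means a fixed line in $\mathfrak g$, the ``only if'' is false, and no argument can close your step.

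The paper does not attempt this upgrade. Its necessity argument uses only condition (ii): $\curv_\H(X_\H,Y_\H)$ must be proportional to the single global vertical field $R$. That is, $\curv_\H$, equivalently $\omega_\alpha$, has one-dimensional image at every point. A fixed $\xi\in\mathfrak g$ enters only in the sufficiency direction.

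Your necessity argument should stop at that point. Be aware that the two directions of (ii) are then proved under different readings of ``global image of dimension $1$'': pointwise for necessity, a fixed line for sufficiency.
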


\begin{proof} The first case is clear, as the curvature can never vanish (because the principal bundle is $A$-fat). For the second case, the necessity is immediate as, by Theorem \ref{thm:Jacobi_general}, we need the vector $\mathrm{curv}_{\mathcal{H}}(X_\mathcal{H}, Y_\mathcal{H})$ to be proportional to $R$, for every $X, Y \in \X(P)$.

Regarding the sufficiency, suppose that $\omega_\alpha$ has a $1$-dimensional image, say $\Ima \omega_\alpha = \left \langle\xi \right \rangle \subseteq \mathfrak{g}$, where $\xi \in \mathfrak{g}$. Then, necessarily $\mu (\xi) \neq 0$ (as $\alpha$ is $A$-fat and $\mu \in A$). In particular, 
\[
\curv_\H = (\mu \circ \omega_\alpha) \cdot \left(-\frac{\widehat{\xi}}{\mu(\xi)}\right)\,,
\]
where $\widehat{\xi}$ denotes the fundamental vector field induced on $P$. Then, by Theorem \ref{thm:Jacobi_general}, it is enough to show that, if $E \coloneqq - \widehat{\xi}/\mu(\xi)$, we have that $\left(\sksym{2}\transp{\pr_\H}\right)(\iota_{E} \d (\mu \circ \omega_\alpha)) = 0$ and that it leaves invariant the horizontal subspace. The latter condition follows from general theory of principal bundles, as $E$ is itself a fundamental vector field. For the former, let us first notice that we have
\[
\Lie_{\widehat{\xi}}\,\omega_\alpha = - \mathrm{ad}_\xi \circ \omega_\alpha\,,
\]
so that $\iota_E \d (\mu \circ \omega_\alpha) = \frac{1}{\mu(\xi)} \mu \circ \mathrm{ad}_\xi \circ \omega_\alpha$. However, since the image of $\omega_\alpha$ has rank one (proportional to $\xi$) we have $\mathrm{ad}_\xi \circ \omega_\alpha = 0$. To check uniqueness, it is enough to observe that for different $\mu$ and $\mu'$ we have the equality 
\[
\mu \circ \omega_\alpha = \frac{\mu(\xi)}{\mu'(\xi)} \mu' \circ \omega_\alpha\,,
\]
which finishes the proof.
\end{proof}

As a consequence, we have the following. 

\begin{corollary} Every fat $U(1)$-bundle has a canonical induced Jacobi structure.
\end{corollary}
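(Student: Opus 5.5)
The plan is to apply the preceding proposition with $G = U(1)$ and $A = \mathfrak{g}^\ast \setminus \{0\}$. We first fix what ``fat'' means for a $U(1)$-bundle. Since $\mathfrak{u}(1) \cong \R$ is abelian, the coadjoint action is trivial, so every subset of $\mathfrak{g}^\ast$ is $\mathrm{Ad}^\ast$-invariant. We interpret a fat $U(1)$-bundle as an $A$-fat bundle with $A = \mathfrak{g}^\ast\setminus\{0\}$. Because $\mathfrak{g}^\ast$ is one-dimensional, this is equivalent to $\mu\circ\omega_\alpha$ being non-degenerate on $\H$ for a single (hence every) nonzero $\mu$. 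In particular $A$ is nonempty, so the hypotheses of the previous proposition are met for any choice of $\mu\in A$.

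Next we check the criterion in part (ii) of the proposition. The curvature $\omega_\alpha$ takes values in $\mathfrak{g} = \mathfrak{u}(1)$, which is one-dimensional. Moreover, $\omega_\alpha$ is nonzero at every point, because $\mu\circ\omega_\alpha$ is symplectic on $\H_p$ and $\dim \H_p = \dim M > 0$. (If $\dim M = 0$ the statement is vacuous, or we exclude it.) Hence the global image of $\omega_\alpha$ is exactly $\langle \xi\rangle = \mathfrak{g}$ for any generator $\xi$, so it has dimension $1$. The proposition then says that $\Lambda_\mu$ can be completed to a Jacobi structure, with Reeb-type vector field $E = -\widehat{\xi}/\mu(\xi)$. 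We could also note directly that $\mathrm{ad}_\xi = 0$ in the abelian case, so the condition $\left(\sksym{2}\transp{\pr_\H}\right)(\iota_E \d(\mu\circ\omega_\alpha)) = 0$ holds trivially.

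The main obstacle is the word \emph{canonical}, because several choices are involved: $\mu$, the vertical metric $g$, and the generator $\xi$. For $\mu$, the proposition already shows that changing $\mu$ only rescales the structure by a real constant. To remove this ambiguity we fix the standard identification $\mathfrak{u}(1)\cong i\R$ with generator $\xi = i$ (or the integral lattice generator), and set $\mu(\xi)=1$. For $g$, we argue that $\Lambda_\mu$ does not depend on $g$: by Equation~\eqref{eq:Lambda_decomposition}, $\widehat\Lambda = \pr_\H\circ\widehat B^{-1}$. Since $\widehat B$ maps $\H$ isomorphically onto $\V^\circ$ via $\mu\circ\omega_\alpha$ and maps $\V$ onto $\H^\circ$ via $g$, the map $\pr_\H \circ \widehat B^{-1}$ restricted to $\V^\circ$ is just the inverse of $\widehat{\mu\circ\omega_\alpha}|_{\H}$, and it kills $\H^\circ$. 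So $\Lambda_\mu$ is determined by the connection and $\mu$ alone. The vector field $E = -\widehat{\xi}/\mu(\xi)$ is independent of $g$ as well. Therefore the Jacobi pair $(\Lambda_\mu, E)$ depends only on the connection and the normalization of $\mu$, which is the sense in which it is canonical.
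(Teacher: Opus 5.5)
Your proposal is correct and follows essentially the paper's route: specialize the preceding proposition to $G = U(1)$, where $\dim \mathfrak{g} = 1$ and the nonvanishing of $\omega_\alpha$ force its image to be one-dimensional, so part (ii) gives the Jacobi structure with $E = -\widehat{\xi}/\mu(\xi)$, canonical up to the real scalar coming from the choice of $\mu$. Your check that $\Lambda_\mu$ and $E$ do not depend on the vertical metric $g$, together with the normalization $\mu(\xi)=1$, makes explicit what the paper leaves implicit when it calls the structure canonical.
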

In essence, every fat $U(1)$-bundle is a contact manifold, and the Jacobi structure above is the Jacobi structure induced by the contact form $\alpha$ (see \cite{Grabowska_2024}).

\section{Concluding remarks and outlook}
\label{section:conclusions}

In this paper, we compute the obstruction for a non-degenerate 2-covariant tensor field $B \in \Gamma(\T^\ast M \otimes \T^\ast M)$ to define a Poisson or Jacobi structure. This framework allows us to recover the classical brackets of symplectic, locally conformally symplectic, contact, cosymplectic, and cocontact geometry. 
As novel applications, we have characterised the conditions under which fat principal bundles and almost cosymplectic structures of order $p$ define Poisson or Jacobi brackets. 

This work opens several interesting lines of research:
\begin{enumerate}[\rm (i)]
    \item Building on the ideas presented here, a natural next step is to investigate whether the dynamical systems associated with the various geometries considered in this work can be unified using our framework.

    \item The case where the tensor field $B$ satisfies $\T M = \Ker \omega \oplus \Ker S$ falls under the general theory of $G$-structures defined by tensors. It would be interesting to relate the results in this text to the computation of the structure tensors of the associated $G$-structure. 
    
    \item Recently, there has been significant effort to study the brackets in the geometric formalisms of classical field theories. We believe that a similar computation would be of use to characterise under what conditions certain geometric structures induce a higher Poisson structure. 

    \item In several places in mechanics, namely when dealing with the Lagrangian point of view, rather than the Hamiltonian, it can happen that the natural tensor $B$ that one can build is \emph{degenerate}. We propose as further work to discuss Dirac (or Dirac--Jacobi) structures in this context.
\end{enumerate}

\subsection*{Acknowledgements}

The authors R. Izquierdo-López and  M. de León acknowledge financial support from the Spanish Ministry of Science, Innovation and Universities under grants PID2022-137909NB-C21 and the Severo Ochoa Program for Centers of Excellence in R\&D (CEX2023-001347-S). R. Izquierdo-López wishes to thank the Spanish Ministry of Science, Innovation and Universities for the contract FPU24/02636. Á. Martínez-Muñoz acknowledges financial support from a predoctoral contract funded by Universitat Rovira
i Virgili under grant 2025PMF-PIPF-14. 

We thank Jaime Bajo for carefully reading the manuscript and providing valuable comments and corrections.
\bigskip

\noindent {\bf Authors' contributions:} All authors contributed to the study conception and design. The manuscript was written and revised by all authors. All authors read and approved the final version.

\bigskip

\noindent {\bf Competing Interests:} The authors have no competing interests to declare. 

\bigskip

\noindent {\bf AI disclosure:} AI was used to assist with the review of the existing literature and to improve the language and clarity of the manuscript.

\bibliographystyle{abbrv}
{\small
\bibliography{references.bib}
}


\end{document}